\newtheorem{set2}{Satz}[section]
\newtheorem{theorem}[set2]{Theorem}
\newtheorem{definition}[set2]{Definition}
\newtheorem{lemma}[set2]{Lemma}
\newtheorem{notation[set2]}{Notation}
\newtheorem{proposition}[set2]{Proposition}
\newtheorem{remark}[set2]{Remark}
\newcommand{\ep}{\hfill{$\square$}}
\newenvironment{proof}[1][Proof]{\textit{#1.} }{\
\\}
\def\XXint#1#2#3{{\setbox0=\hbox{$#1{#2#3}{\int}$}
\vcenter{\hbox{$#2#3$}}\kern-.5\wd0}}
\newcommand{\Hb}{H^2(\Omega;\R^n)}
\newcommand{\Wa}{W^{1,p}(\Omega)}
\newcommand{\bl}{\left(}
\newcommand{\br}{\right)}
\newcommand{\e}{\varepsilon}
\newcommand{\di}{\,\mathrm{div}}
\newcommand{\R}{\mathbb{R}}
\newcommand{\N}{\mathbb{N}}
\newcommand{\C}{\mathcal}
\newcommand{\ol}{\overline}
\newcommand{\dx}{\,\mathrm dx}
\newcommand{\dt}{\,\mathrm dt}
\newcommand{\ds}{\,\mathrm ds}
\newcommand{\diota}{\,\mathrm d\iota}
\newcommand{\dxt}{\,\mathrm dx\,\mathrm dt}
\newcommand{\dxs}{\,\mathrm dx\,\mathrm ds}
\newcommand{\GammaD}{{\Gamma_\mathrm{D}}}
\newcommand{\CC}{\mathbf{C}}
\newcommand{\Rn}{\mathbb{R}^n}
\newcommand{\Rnn}{\mathbb{R}^{n\times n}}
\begin{document}
\selectlanguage{english}
%\maketitle
\phantom{w} \vspace{1mm}
\begin{center}
\LARGE
Existence of weak solutions for a hyperbolic-parabolic phase field system with mixed boundary conditions on non-smooth domains\footnote{This project is supported by the Research Center  
        ``Mathematics for Key Technologies''  {\sc Matheon} in Berlin (Germany).}
\\[4mm]

\end{center}
\begin{center}
\vspace{2mm}
	Christian Heinemann$^2$, Christiane Kraus\footnote{Weierstrass Institute for Applied Analysis and Stochastics (WIAS), Mohrenstr. 39, 10117 Berlin
	\\
	E-mail: \texttt{christian.heinemann@wias-berlin.de} and
\texttt{christiane.kraus@wias-berlin.de}}
\\\vspace{2mm}
	December 10, 2013
\end{center}
\noindent

\begin{abstract}
The aim of this paper is to prove existence of weak solutions of
hyperbolic-parabolic evolution inclusions defined on Lipschitz domains with mixed boundary conditions describing,
for instance, damage processes and elasticity with inertia terms.
To this end, a suitable weak formulation to deal with such evolution inclusions in a non-smooth setting is presented.
Then, existence of weak solutions is proven by utilizing time-discretization, $H^2$-regularization of the displacement variable and variational techniques
from \cite{WIAS1520} to recover the subgradients after the limit passages.
\end{abstract}

{\it AMS Subject classifications}
35L20,   %Initial-boundary value problems for second-order hyperbolic equations
35L51, %  Second-order hyperbolic systems
35K85, %Linear parabolic unilateral problems and linear parabolic, variational inequalities
35K55, %nonlinear PDE of parabolic type 
49J40, %Variational principles of physics
49S05, %Calculus of variations and optimal control; optimization    
74A45,	%theories of damage and fracture
74G25, %global existence result
34A12,  %Initial value problems, existence, uniqueness,
35K92,   %Quasilinear parabolic equations with $p$-Laplacian
35K35;  %Initial-boundary value problems for higher-order parabolic equations
\\[2mm]
{\it Keywords:} {Hyperbolic-parabolic systems, doubly nonlinear
  differential inclusions, existence results, 
  energetic solutions, weak solutions,  linear elasticity, rate-dependent damage systems.  \\[2mm]
}

%{\it Key Words:}
%\\[4mm]
%{\it AMS Subject Classifications:}

\section{Introduction}

	The gradient-of-damage model motivated by Fr\'emond and Nedjar in \cite{FN96} describes the damage progression
	by microscopic motions in solid structures resulting from the growth of microcracks and
	microvoids.
	In this approach, an internal variable $z$ models the degree of damage in every material point. It is bounded in the unit interval $[0,1]$ with the following interpretation:
%	the degree of damage in every material point is modeled by a real valued function $z$ (order parameter) with range $[0,1]$
%	and uses the following interpretation:
%	\begin{itemize}
%		\item[--]
			the value $1$ stands for no damage,
%		\item[--]
			a value between $0$ and $1$ qualifies partial damage and
%		\item[--]
			the value $0$ indicates maximal damage.
%	\end{itemize}
%	The evolution of $z$ specified by a balance equation for micro--forces.
%	An increase in damage is caused by high stresses.
	Beyond that, elastic deformations are described by a vector-valued function $u$ which specifies the displacement from a prescribed
	reference configuration $\Omega$.
	The evolution law for $u$ and $z$ consists of two equations:
	a hyperbolic equation for the mechanical forces and a parabolic equation for the damage process involving two subgradients.
	The time evolution of $(u,z)$ under constant temperature can be deduced from the laws of thermodynamics and is summarized
	in the following hyperbolic-parabolic PDE system (see \cite{FN96, Fr12}):%\newpage
	\begin{subequations}
	\label{eqn:PDE}
	\begin{align}
	\label{eqn:pde1}
		\qquad u_{tt}&-\di\bl W_{,e}(\e(u),z)\br=\ell,\\
	\label{eqn:pde2}
		\qquad z_t&-\Delta_p z+W_{,z}(\e(u),z)+f'(z)+\xi+\varphi=0,
	\end{align}
	\end{subequations}
	where the subgradients $\xi$ and $\varphi$ are given by
	\begin{subequations}
	\label{eqn:PDEb}
	\begin{align}
	\label{eqn:pde3}
		\hspace*{4.3em}\xi&\in\partial I_{[0,\infty)}(z)&\qquad\qquad\;\;&\text{with the subdifferential}\\
		&&&\partial I_{[0,\infty)}(z)=
		\begin{cases}
			\{0\}&\text{if }z>0,\\
			(-\infty,0]&\text{if }z=0,\\
			\emptyset&\text{if }z<0,
		\end{cases}\notag\\
	\label{eqn:pde4}
		\varphi&\in\partial I_{(-\infty,0]}(z_t)&&
		\text{with the subdifferential}\\
		&&&\partial I_{(-\infty,0]}(z_t)=
		\begin{cases}
			\{0\}&\text{if }z_t<0,\\
			[0,\infty)&\text{if }z_t=0,\\
			\emptyset&\text{if }z_t>0.
		\end{cases}\hspace*{10.5em}
		\notag
	\end{align}
	\end{subequations}
	The system is supplemented with the following initial-boundary conditions:
	\begin{subequations}
	\label{eqn:PDEIBC}
	\begin{align}
	\label{eqn:PDEIBC1}
		\hspace*{1.5em}&u=b&&\text{ on }{\Gamma_\mathrm{D}}\times(0,T),\hspace*{15.7em}\\
	\label{eqn:PDEIBC2}
		&W_{,e}(\e(u),z)\cdot\nu=0&&\text{ on }\Gamma_\mathrm{N}\times(0,T),\\
	\label{eqn:PDEIBC3}
		&\nabla z\cdot\nu=0&&\text{ on }\partial\Omega,\\
		&u(0)=u^0&&\text{ in }\Omega,\\
		&u_t(0)=v^0&&\text{ in }\Omega,\\
		&z(0)=z^0&&\text{ in }\Omega.
	\end{align}
	\end{subequations}
	The hyperbolic equation \eqref{eqn:pde1} is the balance equation of forces containing inertial effects modeled by $u_{tt}$,
	the parabolic equation \eqref{eqn:pde2} describes the evolution law for the damage processes
	and \eqref{eqn:pde3} as well as \eqref{eqn:pde4} are subgradients corresponding to the constraints that the
	damage is non-negative ($z\geq 0$) and irreversible ($z_t\leq 0$).
	The symbol $\Delta_p z:=\di(|\nabla|^{p-2}\nabla z)$ denotes the $p$-Laplacian of $z$.
	The inclusions \eqref{eqn:pde3}-\eqref{eqn:pde4} are explained in more detail below.
	
	Moreover, $\ell$ denotes the exterior volume forces, $f$ a given damage-dependent potential, $\e(u)$
	describes the linearized strain tensor, i.e. $\e(u)=\frac 12(\nabla u+(\nabla u)^\mathrm{T})$,
	and ${\Gamma_\mathrm{D}}$ and ${\Gamma_\mathrm{N}}$ indicate the Dirichlet part and the Neumann part of the boundary $\partial\Omega$.
	The elastic energy density $W$ is assumed to be of the form
	\begin{align}
	\label{eqn:defW}
		W(e,z)=\frac 12 h(z)\mathbf Ce:e,
	\end{align}
	where $\mathbf C$ is the stiffness tensor and $h$ models the influence of the damage.
	We assume $h'\geq 0$ and that complete damage does not occur, i.e., $h$ is bounded from below by a positive constant.
	
%	We would like to remark in advance that the condition $p\in(n,\infty)$ is adopted (see Section \ref{section:notation}) where $n$ denotes the space dimension to
%	ensure compactness
%	of the Sobolev space $W^{1,p}(\Omega)$ into the space of continuous functions on $\ol\Omega$. This embedding is extensively used in the existence proof
%	(see the approximation results in Lemma \ref{lemma:approximation} and Lemma \ref{lemma:varProp}).
%	In \cite{WIAS1569} the $p$-Laplacian in the damage law is replaced by the operator $(\delta\Delta_p +\Delta)z$.
%	A limit analysis $\delta\searrow 0$ allows yields weak solutions $\Delta z$ 

	Let us give an interpretation for the system \eqref{eqn:pde2}, \eqref{eqn:pde3} and \eqref{eqn:pde4} modeling damage processes
	via gradient flows
	(for a physical motivation by means of microscopic force balance laws and constitutive relations we refer to \cite{FN96}):
	For this purpose, we observe that the inclusion \eqref{eqn:pde4} is equivalent to the complementarity problem
	\begin{align}
		&z_t\leq 0,\qquad
		\varphi z_t=0,\qquad
		\varphi\geq 0.
	\label{eqn:complementarity}
	\end{align}
	Now, we introduce the free energy $\C F$ as
	\begin{align}
		\C F(u,z):=\int_{\Omega}\bl\frac 1p|\nabla z|^p+W(\e(u),z)+f(z)+I_{[0,\infty)}(z)\br\dx,
	\label{eqn:freeEnergy}
	\end{align}
	where the indicator function $I_{[0,\infty)}$ can be interpreted as an obstacle potential, i.e., the damage variable $z$ is forced to be non-negative.
	The gradient-of-damage term $\frac 1p|\nabla z|^p$ models
	influence of the damage on its surrounding and also has a regularizing effect
	from the mathematical point of view.
	A calculation reveals $\varphi=-z_t-\zeta$ with $\zeta\in\partial_z\C F(u,z)$, where $\varphi$ is given by \eqref{eqn:pde4} and
	$\zeta$ is a subgradient of the generalized subdifferential $\partial_z\C F(u,z)$,
	i.e.
	\begin{align}
		\zeta=-\Delta_p z+W_{,z}(\e(u),z)+f'(z)+\xi,\quad\xi\in \partial I_{[0,\infty)}(z).
	\label{eqn:drivingForce}
	\end{align}
	The complementarity formulation implies
	\begin{align*}
		\begin{cases}
			\text{if } z_t<0&\text{then }z_t\in-\partial_z\C F(u,z),\\
			\text{if } z_t=0&\text{then }\zeta\leq 0,\\
			\hspace*{0.93em}z_t>0&\text{not allowed}.
		\end{cases}
	\end{align*}
	We gain the following interpretation of a non-smooth evolution:
	As long as the driving force $-\zeta$ given in \eqref{eqn:drivingForce} is non-positive, the evolution is described
	by the gradient flow $z_t=-\zeta$ tending to minimize the free energy.
	Whenever $\zeta$ becomes negative, $z_t$ is $0$.
	We remark that also an activation threshold for the damage process can be incorporated by adding a linear term to the potential $f$.
	
%	We see that in the case $z_t<0$ the damage evolution law \eqref{eqn:pde2} can be written as a gradient flow where the driving force is in $-\partial_z\C F(u,z)$.
%	However, whenever $z_t$ approaches $0$ we have a driving force $\zeta\in\partial_z\C F(u,z)$ which is non-positive but does not affect $z_t$.

	The aim of this paper is to give a notion of solution to the system \eqref{eqn:PDE}-\eqref{eqn:PDEIBC}
	considered on bounded Lipschitz domains $\Omega$ in a weak sense and
	to prove existence of weak solutions.
	In the following, we summarize the main difficulties we were faced with and how they are solved:
	\begin{itemize}
		\item[$\bullet$]
			Let us point out that even on smooth domains $\Omega$ severe difficulties arise in establishing strong solutions to
			\eqref{eqn:PDE}-\eqref{eqn:PDEIBC}. Because
%			Because of the non-smoothness of $\Omega$ and the mixed boundary conditions in \eqref{eqn:PDEIBC1}-\eqref{eqn:PDEIBC2},
%			no spatial $H^2$--regularity results are available for the displacement variable $u$.
%			Beyond that, 
			due to a missing viscosity term $-\mathrm{div}(\mathbf V(z)\e(u_t))$ in \eqref{eqn:pde1}, no $L^2(H^2)$-regularity estimates for $u_t$ are available.
			This, in turn, leads to severe difficulties in obtaining higher regularity estimates for the damage variable $z$.
			In fact, to obtain $L^\infty(L^2)$-a priori estimates for the $p$-Laplacian $\Delta_p z$ and for the subgradients \eqref{eqn:pde3}-\eqref{eqn:pde4},
			we may consider Moreau-Yosida type regularizations $(\xi_\delta,\varphi_\delta)$ for $(\xi,\varphi)$.
			Then, we may test \eqref{eqn:pde2} with $(-\Delta_p z+\xi_\delta(z))_t$, integrate and use the estimates
			$\langle\varphi_\delta(z_t),-\Delta_p z_t\rangle\geq 0$ and $\langle\varphi_\delta(z_t),(\xi_\delta(z))_t\rangle\geq 0$
			and $\langle-\Delta_p z,\xi_\delta(z)\rangle\geq 0$.
			But to conclude the desired estimates via integration by parts, we would require $L^4$-bounds for the strain rate $\e(u_t)$
			(cf. \cite[Seventh a priori estimate]{RR12}).
			
			In our case the situation is even worse since no global $H^2$-elliptic regularity results are available for bounded Lipschitz domains
			and mixed boundary conditions.

			In consequence, we have to devise a concept of weak solutions.
			In the paper \cite{WIAS1520} (where the inertia term $u_{tt}$ is neglected), a notion was introduced, which allows us to formulate the double inclusion
			in system \eqref{eqn:pde2}, \eqref{eqn:pde3} and \eqref{eqn:pde4} with fairly weaker regularity assumptions.
			The notion combines a variational approach with a total energy-dissipation inequality.
			Here, we adapt this formulation to the present situation since the inertia term $u_{tt}$ in \eqref{eqn:pde1} leads to new terms in the energy-dissipation inequality.
		\item[$\bullet$]
			To prove existence of weak solutions in the sense mentioned above, we are required to establish a total energy-dissipation inequality.
			However, when using a time-discretization scheme,
			the discretized system may exhibit error terms which converge to $0$ in the time-continuous limit if
			$L^4$-bounds for the strain tensor $\e(u)$ are available.
			
			To this end, we firstly study a regularized version of system \eqref{eqn:PDE}-\eqref{eqn:PDEIBC} where an additional fourth-order
			term gives naturally rise to spatial $H^2$-regularity for the displacement variable $u$ in a weak setting.
			Finally, we perform a limit analysis to obtain a weak solution of the system \eqref{eqn:PDE}-\eqref{eqn:PDEIBC}.
		\item[$\bullet$]
			In order to perform the limit passage of the subgradients occurring in the two approximations (namely the time-discretization and the $H^2$-regularization),
			approximation techniques from \cite{WIAS1520} (see Lemma \ref{lemma:approximation} and Lemma \ref{lemma:varProp})
 			are utilized.
 			Although the precise structure of the subgradient $\xi$ can be deduced in the $H^2$-regularized version of \eqref{eqn:PDE}-\eqref{eqn:PDEIBC}
 			(see \eqref{eqn:xiDef}),
 			the situation is more involved when the passage from the $H^2$-regularized to the limit system is investigated.
 			In contrast to \cite{WIAS1520} and by the best knowledge of the authors, no strong $L^2(H^1)$-convergence of $u$
 			can be established in this limit passage due to the additional inertia term.
 			
 			We solved this problem by exploiting the precise structure of $\xi$ in the regularized system
 			such that it cancels out with other terms in \eqref{eqn:pde2} on certain parts of the domain (see \eqref{eqn:cancelTrick}).
 			For the remaining terms we apply lower-semicontinuity arguments and uniform convergence of the damage variable to pass to the limit system.
 			Only then we are able to recover the subgradient $\xi$.
	\end{itemize}
	We would like to conclude this introduction by comparing our contribution with existing mathematical works involving gradient-of-damage models
	and some of their regularization strategies in the literature
	(the choice of literature is, of course, only an excerpt and not complete):
	\begin{itemize}
		\item[$\bullet$]
			Among the pioneering works, \cite{FKS99} (see also \cite{FKNS98} for quasi-static evolutions) analyzed the damage model in \cite{FN96} in the case of an one-dimensional rod.
			The authors considered an inertia term in the force balance equation and a viscosity term in the stress tensor.
			They were able to prove local-in-time existence of solutions.
			In the case of reversible damage processes, uniqueness of solutions was proven.
			Several different approximation strategies came into play such as penalization methods for the subgradients, a regularization of the damage rate function,
			truncation and time-redarding methods.
		\item[$\bullet$]
			A 3D elasticity-damage model with inertial effects was analytically studied with a regularization of the damage rate function in \cite{BS04} and with a viscosity term in the
			stress tensor in \cite{BSS05}.
			Local-in-time existence was proven by means of Schauder fixed-point theorem and uniqueness was obtained in the case that either
			irreversibility or boundedness of the damage variable is dropped.
			Homogeneous Dirichlet boundary conditions for the displacements and smooth domains were assumed.
		\item[$\bullet$]
			Rate-independent damage models with a quasi-static balance of forces
			were investigated in \cite{Mielke06}.
			This work covers existence results for the rigid body impact problem.
			By means of a so-called energetic formulation and the usage of $\Gamma$-convergence,
			the case of complete material desintegration (complete damage) is also discussed.
			%elasticity: quasi-static, damage: rate-independent
			%existence
			%impact of another body
			%incomplete and complete damage by Gamma-convergence
			%proof by time-discretization
		\item[$\bullet$]
			The work \cite{MT10} analyzed further rate-independent damage models and proved
			existence of energetic solutions by a new method for the construction of suitable joint recovery sequences.
			Also temporal regularity properties of the solutions were shown.
		\item[$\bullet$]
			The transition between rate-dependent and rate-independent damage models, the so-called vanishing viscosity limit, was investigated in
			\cite{KRZ11}.
			The authors assumed a quasi-static equilibrium of forces and a higher order Laplacian in the damage law (for 3D).
			Existence of energetic solutions was shown, where the proof is based on a regularization of the damage rate function.
			Furthermore, a priori estimates were derived and uniqueness results were deduced for special cases.
		\item[$\bullet$]
			A coupled system of damage, viscoelasticity with inertia and heat conduction was studied in \cite{RR12}.
			In case of isothermal processes, smooth domains and homogeneous Dirichlet boundary conditions for the displacements, existence results were established
			and uniqueness was proven for reversible damage processes.
		\item[$\bullet$]
			\cite{WIAS1520} as well as \cite{WIAS1569} explored PDE systems coupling damage processes with quasi-static elastic systems and Cahn-Hilliard equations for phase separation.
			A notion of weak solutions involving variational inequalities and a total energy-dissipation inequality was introduced.
			The authors prove existence of weak solutions and higher integrability of the strain tensors.
			They also provide abstract approximation techniques (see Lemma \ref{lemma:approximation} and Lemma \ref{lemma:varProp})
			which will be utilized in the present work to study \eqref{eqn:PDE}-\eqref{eqn:PDEIBC} involving an inertia term in the force balance equation.
			In particular, this system allows for the presence of elastic waves interacting with the damage evolution.
			
			Let us also mention that the $p$-Laplacian $-\Delta_p z:=-\mathrm{div}(|\nabla z|^{p-2}\nabla z)$
			in \eqref{eqn:pde2} describes the diffusion of damaged material parts across their proximities.
			For mathematical reasons $p$ will be chosen to be larger than the space dimension because the embedding $W^{1,p}(\Omega)\hookrightarrow C(\ol{\Omega})$
			plays an essential role in the mentioned approximation lemmas.
			By replacing the operator $-\Delta_p$ by $-(\delta\Delta_p +\Delta)$ in the damage law and performing a limit analysis $\delta\searrow 0$,
			we were even able to handle damage models with the standard Laplacian (i.e. $p=2$ as done in \cite{WIAS1569}).
			However, to keep this presentation short, we will not follow this approach here.
			
			%coupled system: quasi-static elasticity, Cahn-Hilliard equation, damage
			%existence, higher integrability
			%no full dynamic problem elastic waves
	\end{itemize}
%	First of all, we would like to mention that because of the Lipschitz regularity of $\Omega$ and the mixed boundary conditions for $u$,
%	we cannot apply $H^2$--regularity theory.
%	Furthermore, we do not have viscous terms such as $\e(u_t)$ in the force balance equation \eqref{eqn:pde1} which gives better space-time regularity for $u$.
%	In the literature (see for instance \cite{Seg04, BSS05, RR12}), $H^2$--regularity for $u$ as well as viscous regularizations are used to handle the
%	differential inclusion \eqref{eqn:pde2}-\eqref{eqn:pde4} with Yosida regularization techniques.
%	We present a different approach which allows to prove existence in a weak notion.
%	More precisely, we show that the inclusion \eqref{eqn:pde2}-\eqref{eqn:pde4} can be rewritten as a variation inequality
%	and a total energy inequality. For both properties, we need less regularity for the damage variable $z$.
%	By using variational techniques introduced in \cite{WIAS1569} and $H^2$--regularization techniques, we are eventually able to show
%	existence of weak solutions.
	
	\textbf{Structure of the paper}
	
	In Section \ref{section:notation}, we introduce some notation and preliminary mathematical results from \cite{WIAS1520, WIAS1569}.
	The main part is Section \ref{section:main}. We state and justify a notion of weak solutions in Subsection \ref{section:weakNotion}.
	The proof of the existence theorem ranges from Subsection \ref{section:H2reg} to Subsection \ref{section:limit}.
	At first, we prove existence of weak solutions for an $H^2$-regularized problem by using a time-discretization scheme
	and by applying variational techniques from Section \ref{section:notation} to pass to the time-continuous system.
	Finally, we get rid of the regularization by a further limit passage which is performed in Subsection \ref{section:limit}.

\section{Notation and preliminaries}
\label{section:notation}
	Throughout this work, let $p\in(n,\infty)$ be a constant and $p'=p/(p-1)$ its dual and let $\Omega\subseteq\R^n$ ($n=1,2,3$) be a bounded Lipschitz domain.
	For the Dirichlet boundary $\Gamma_\mathrm{D}$ and the Neumann boundary $\Gamma_\mathrm{N}$ of $\partial\Omega$,
	we adopt the assumptions from \cite{Ber11}, i.e., $\Gamma_\mathrm{D}$ and $\Gamma_\mathrm{N}$ are non-empty and relatively open sets in
	$\partial\Omega$ with finitely many path-connected components such that $\Gamma_\mathrm{D}\cap \Gamma_\mathrm{N}=\emptyset$
	and $\ol{\Gamma_\mathrm{D}}\cup \ol{\Gamma_\mathrm{N}}=\partial\Omega$.
	
	The considered time interval is denoted
	by $[0,T]$ and  $\Omega_t:=\Omega \times [0,t]$ for $t \in [0,T]$.
	Form now on the time-derivative in front of the main variables and the data is denoted by $\partial_t$ and 
	the partial derivative of the density function $W$ (see \eqref{eqn:defW}) with respect to one of its variables $e$ and $z$ is
	abbreviated by $W_{,e}$ and $W_{,z}$, respectively.
	Furthermore, we define for $k\ge1$ the spaces
	\begin{align*}
		W_+^{k,p}(\Omega)&:=\big\{u\in W^{k,p}(\Omega)\,|\,u\geq 0\text{ a.e. in }\Omega\big\},\\
		W_-^{k,p}(\Omega)&:=\big\{u\in W^{k,p}(\Omega)\,|\,u\leq 0\text{ a.e. in }\Omega\big\},\\
		H_{\Gamma_\mathrm{D}}^{k}(\Omega)&:=\big\{u\in H^{k}(\Omega)\,|\,u= 0\text{ on }{\Gamma_\mathrm{D}}\text{ in the sense of traces}\big\}.
	\end{align*}

% 	%The partial derivative of a function $W$ with respect to a variable $e$ is denoted by $W_{,e}$.
	
	The following variational and approximation results are crucial to establish existence of weak solutions.
	They are aimed to pass to the limit in certain integral inequalities with weakly convergent functions constraining the set of test-functions
	(see ``Step 1'' and ``Step 2'' in the proof of Proposition \ref{proposition:regSystem}).
	Roughly speaking, Lemma \ref{lemma:approximation} provides an approximation result in a strong topology for test-functions satisfying weakly convergent constraints.
	This enables us to pass to the limit in the corresponding integral inequalities.
	Then, Lemma \ref{lemma:varProp} provides a method to drop the limit constraints on the set of test-functions,
	where, in turn, a new integral term arises on the right-hand side.
	This method will be used to perform the limit passage of a time-discretized version of \eqref{eqn:pde2} and to recover the
	subgradients \eqref{eqn:pde3} and \eqref{eqn:pde4}.
	
	We will frequently make use of the compact embedding $$W^{1,p}(\Omega)\hookrightarrow C(\ol\Omega)$$
	without mentioning in our considerations.
	In the following,
	the notation $\{\zeta=0\}\supseteq\{f=0\}$ for functions in $L^\infty(0,T;W^{1,p}(\Omega))$ should be read as
	\begin{align}
		\big\{x\in\ol\Omega\,|\,\zeta(x,t)=0\big\}\supseteq
			\big\{x\in\ol\Omega\,|\,f(x,t)=0\big\}\qquad\textit{ for a.e. }t\in(0,T).
	\label{eqn:setInclusion}
	\end{align}
	Beyond that, the subscript $\tau$ always refers to a sequence $\{\tau_k\}_{k\in\N}$, with $\tau_k\searrow 0$ as $k\nearrow\infty$.
	
	\begin{lemma}[See {[HK13]}]
	\label{lemma:approximation}
		Let
		\begin{itemize}
			\item[$\bullet$]
				$f_\tau,f\in L^\infty(0,T;W^{1,p}_+(\Omega))$, $\tau>0$\\
				with $f_\tau(t)\to f(t)$ weakly in $W^{1,p}(\Omega)$ as $\tau\searrow 0$
				for a.e. $t\in(0,T)$,
			\item[$\bullet$]
				$\zeta \in L^\infty(0,T;W^{1,p}_+(\Omega))$
				with $\{\zeta=0\}\supseteq\{f=0\}$.
		\end{itemize}
		Then, there exist a sequence $\zeta_\tau\in L^\infty(0,T;W^{1,p}_+(\Omega))$
		and constants $\nu_{\tau,t}>0$ such that
		\begin{itemize}
			\item[$\bullet$]
				$\zeta_\tau\to \zeta$ strongly in $L^q(0,T;W^{1,p}(\Omega))$ as $\tau\searrow0$ for all $q\in[1,\infty)$,
			\item[$\bullet$]
				$\zeta_\tau\to \zeta$ weakly-star in $L^\infty(0,T;W^{1,p}(\Omega))$ as $\tau\searrow0$,
			\item[$\bullet$]
				$\zeta_\tau\leq \zeta$ a.e. in $\Omega_T$ for all $\tau>0$
				(in particular $\{\zeta_\tau=0\}\supseteq\{\zeta=0\}$),
			\item[$\bullet$]
				$\nu_{\tau,t}\zeta_\tau(t)\leq f_\tau(t)$ in $\Omega$ for a.e. $t\in(0,T)$ and for all $\tau>0$.
		\end{itemize}
		If, in addition, $\zeta\leq f$ a.e. in $\Omega_T$ then the last condition can be refined to
		$$
			\zeta_\tau\leq f_\tau\text{ a.e. in }\Omega_T\text{ for all }\tau>0.
		$$
	\end{lemma}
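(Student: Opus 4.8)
The strategy is to lower-truncate $\zeta$ so that its (compact) support stays inside $\{f>0\}$, then to discard those times at which $f_\tau$ has not yet come uniformly close to $f$, and finally to pass to a diagonal sequence. \emph{Step 1 (truncation).} For $k\in\N$ and a.e.\ $t\in(0,T)$ put $\zeta^k(t):=\bigl(\zeta(t)-\tfrac1k\bigr)^+$. Since $v\mapsto(v-\tfrac1k)^+$ is $1$-Lipschitz, maps $W^{1,p}_+(\Omega)$ into itself and does not increase the $W^{1,p}$-norm, we have $\zeta^k\in L^\infty(0,T;W^{1,p}_+(\Omega))$ with $\|\zeta^k(t)\|_{W^{1,p}}\le\|\zeta(t)\|_{W^{1,p}}$ for a.e.\ $t$, $\zeta^k\le\zeta$ and $\{\zeta^k=0\}\supseteq\{\zeta=0\}$. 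From $\nabla\bigl(\zeta(t)-\tfrac1k\bigr)^+=\1_{\{\zeta(t)>1/k\}}\nabla\zeta(t)$ and the standard fact that $\nabla\zeta(t)=0$ a.e.\ on $\{\zeta(t)=0\}$, dominated convergence yields $\zeta^k(t)\to\zeta(t)$ in $W^{1,p}(\Omega)$ for a.e.\ $t$, and a second application of dominated convergence in $t$ (dominating function $\|\zeta(\cdot)\|_{W^{1,p}}$) gives $\zeta^k\to\zeta$ strongly in $L^q(0,T;W^{1,p}(\Omega))$ for every $q\in[1,\infty)$.

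\emph{Step 2 (localisation in time, fixed $k$).} For a.e.\ $t$ the level set $S_{k,t}:=\{x\in\ol\Omega:\zeta(t,x)\ge\tfrac1k\}$ is compact (as $\zeta(t)\in C(\ol\Omega)$) and, by the hypothesis $\{\zeta=0\}\supseteq\{f=0\}$, satisfies $S_{k,t}\subseteq\{f(t)>0\}$; hence $m_{k,t}:=\min_{S_{k,t}}f(t)>0$ and $\Lambda_{k,t}:=\max\bigl\{1,\,\|\zeta(t)\|_{L^\infty(\Omega)}/m_{k,t}\bigr\}<\infty$, where $t\mapsto\Lambda_{k,t}$ is measurable. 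Define the measurable ``good'' time-set $G_{k,\tau}:=\bigl\{t\in(0,T):\|f_\tau(t)-f(t)\|_{C(\ol\Omega)}<\tfrac12 m_{k,t}\bigr\}$ and set
$$
\zeta_{k,\tau}:=\zeta^k\,\1_{G_{k,\tau}},\qquad
\nu_{k,\tau,t}:=
\begin{cases}
1/(2\Lambda_{k,t})&\text{if }t\in G_{k,\tau},\\
1&\text{if }t\notin G_{k,\tau}.
\end{cases}
$$
Then $\zeta_{k,\tau}\in L^\infty(0,T;W^{1,p}_+(\Omega))$ with $\|\zeta_{k,\tau}(t)\|_{W^{1,p}}\le\|\zeta(t)\|_{W^{1,p}}$, $\zeta_{k,\tau}\le\zeta^k\le\zeta$ and $\{\zeta_{k,\tau}=0\}\supseteq\{\zeta=0\}$. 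For $t\in G_{k,\tau}$ one has $f_\tau(t)\ge f(t)-\tfrac12 m_{k,t}\ge\tfrac12 m_{k,t}$ on $S_{k,t}$, hence $2\Lambda_{k,t}f_\tau(t)\ge\Lambda_{k,t}m_{k,t}\ge\|\zeta(t)\|_{L^\infty(\Omega)}\ge\zeta^k(t)$ on $S_{k,t}$, while $\zeta^k(t)=0$ off $S_{k,t}$; thus $\nu_{k,\tau,t}\zeta_{k,\tau}(t)\le f_\tau(t)$, and for $t\notin G_{k,\tau}$ this is trivial since then $\zeta_{k,\tau}(t)=0$. Finally, $f_\tau(t)\to f(t)$ weakly in $W^{1,p}(\Omega)$ implies, via the compact embedding $W^{1,p}(\Omega)\hookrightarrow C(\ol\Omega)$, that $f_\tau(t)\to f(t)$ uniformly on $\ol\Omega$ for a.e.\ $t$; hence $t\in G_{k,\tau}$ for all sufficiently small $\tau$ (depending on $k$ and $t$), so $|(0,T)\setminus G_{k,\tau}|\to0$ as $\tau\searrow0$ and $\zeta_{k,\tau}(t)\to\zeta^k(t)$ in $W^{1,p}(\Omega)$ a.e., and dominated convergence gives $\zeta_{k,\tau}\to\zeta^k$ in $L^q(0,T;W^{1,p}(\Omega))$ as $\tau\searrow0$, for each fixed $k$ and every $q\in[1,\infty)$.

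\emph{Step 3 (diagonalisation and the refinement).} By Steps 1--2, $\|\zeta_{k,\tau}-\zeta\|_{L^1(0,T;W^{1,p})}\le\|\zeta_{k,\tau}-\zeta^k\|_{L^1(0,T;W^{1,p})}+\|\zeta^k-\zeta\|_{L^1(0,T;W^{1,p})}$, where the second term tends to $0$ as $k\to\infty$ and, for each $k$, the first tends to $0$ as $\tau\searrow0$; a standard diagonal choice $k(\tau)\to\infty$ then makes $\zeta_\tau:=\zeta_{k(\tau),\tau}$ and $\nu_{\tau,t}:=\nu_{k(\tau),\tau,t}$ satisfy $\zeta_\tau\to\zeta$ in $L^1(0,T;W^{1,p}(\Omega))$. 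Since $\|\zeta_\tau(t)\|_{W^{1,p}}\le\|\zeta(t)\|_{W^{1,p}}$, interpolation upgrades this to strong convergence in $L^q(0,T;W^{1,p}(\Omega))$ for every $q\in[1,\infty)$, and the uniform bound together with the $L^1$-convergence yields weak-star convergence in $L^\infty(0,T;W^{1,p}(\Omega))$; the remaining assertions $\zeta_\tau\le\zeta$, $\{\zeta_\tau=0\}\supseteq\{\zeta=0\}$ and $\nu_{\tau,t}\zeta_\tau(t)\le f_\tau(t)$ hold for every $\tau$ by Step 2. For the refined conclusion, assume $\zeta\le f$ a.e.\ in $\Omega_T$, shrink the good-time set to $G_{k,\tau}:=\bigl\{t:\|f_\tau(t)-f(t)\|_{C(\ol\Omega)}<\min\{\tfrac12 m_{k,t},\tfrac1k\}\bigr\}$ and take $\nu_{k,\tau,t}\equiv1$: for $t\in G_{k,\tau}$ one has $\zeta^k(t)=\zeta(t)-\tfrac1k\le f(t)-\tfrac1k<f_\tau(t)$ on $S_{k,t}$ and $\zeta^k(t)=0$ off $S_{k,t}$, so $\zeta_{k,\tau}\le f_\tau$ for every $\tau$, while everything else is unchanged and produces $\zeta_\tau\le f_\tau$ a.e.\ in $\Omega_T$.

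\emph{Main difficulty.} The truncation in Step 1 is indispensable: without it the ratio $\zeta(t)/f(t)$ may be unbounded near $\partial\{f(t)>0\}$, so $\zeta(t)$ cannot be dominated by any fixed multiple of $f_\tau(t)$; but that truncation converges only as $k\to\infty$, whereas $\zeta^k(t)$ is dominated by a multiple of $f_\tau(t)$ only once $f_\tau(t)$ is uniformly close to $f(t)$ --- which happens at a threshold depending on both $k$ and the time $t$. Reconciling these two limits (diagonalising in $L^q(0,T;W^{1,p})$ rather than pointwise in $t$, and isolating the offending times via $G_{k,\tau}$ so as to keep the uniform $L^\infty(0,T;W^{1,p})$-bound, with the attendant measurability checks) is the crux; the sub-barrier estimate $\nu_{\tau,t}\zeta_\tau(t)\le f_\tau(t)$ for \emph{all} $\tau$ is then immediate from the construction.
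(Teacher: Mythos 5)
The paper itself does not prove this lemma but cites it from [HK13], and your construction — lower truncation $\zeta^k=(\zeta-\tfrac1k)^+$ so that $\mathrm{supp}\,\zeta^k(t)$ is a compact subset of $\{f(t)>0\}$, switching $\zeta^k$ off on the "bad" times where $f_\tau(t)$ is not yet uniformly close to $f(t)$ (using $W^{1,p}(\Omega)\hookrightarrow\hookrightarrow C(\ol\Omega)$), and a diagonal choice $k=k(\tau)\to\infty$ — is correct and is essentially the argument of [HK13]. The only step you assert without justification is the measurability of $t\mapsto m_{k,t}$ (hence of the set $G_{k,\tau}$), which is what makes $\zeta_{k,\tau}=\zeta^k\,\1_{G_{k,\tau}}$ a measurable $W^{1,p}(\Omega)$-valued function; this is true but deserves a short argument (e.g., approximating the strongly measurable $C(\ol\Omega)$-valued maps $f,\zeta$ by countably valued functions, or rewriting $m_{k,t}$ as a monotone limit of infima over open superlevel sets of $\zeta(t)$, where the infimum can be taken over a countable dense subset of $\ol\Omega$), whereas the constants $\nu_{\tau,t}$ themselves need not be measurable in $t$.
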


	\begin{lemma}[See {[HK13]}]
	\label{lemma:varProp}
		Let $f\in L^{p'}(\Omega;\mathbb R^n)$, $g\in L^1(\Omega)$ and $z\in W_+^{1,p}(\Omega)$
		with $f\cdot\nabla z\geq 0$ a.e. in $\Omega$ and $\{f=0\}\supseteq\{z=0\}$ in an a.e. sense. Furthermore, we assume that
		\begin{align*}
			\int_\Omega \big(f\cdot\nabla\zeta+g\zeta\big)\,\mathrm dx\geq 0\quad
				\text{for all }\zeta\in W_-^{1,p}(\Omega)\text{ with }\{\zeta=0\}\supseteq \{z=0\}.
		\end{align*}
		Then
		\begin{align*}
			\int_\Omega \big(f\cdot\nabla\zeta+g\zeta\big)\,\mathrm dx
				\geq\int_{\{z=0\}}\mathrm{max}\{0,g\} \zeta\,\mathrm dx\quad
				\text{for all }\zeta\in W_-^{1,p}(\Omega).
		\end{align*}
	\end{lemma}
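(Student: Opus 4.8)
The plan is to exploit the hypothesis by constructing, for an arbitrary test function $\zeta\in W_-^{1,p}(\Omega)$, an admissible test function (i.e.\ one supported in $\{z>0\}$ in the appropriate sense) that approximates $\zeta$ away from $\{z=0\}$, so that the gap between $\int_\Omega(f\cdot\nabla\zeta+g\zeta)\dx$ and the restricted inequality is precisely the contribution from $\{z=0\}$. Since $\{f=0\}\supseteq\{z=0\}$, the flux term $f\cdot\nabla\zeta$ will not see the set $\{z=0\}$ once we cut off near it, so the only surviving term on that set is $\int_{\{z=0\}}g\zeta\dx$, and because $\zeta\le 0$ there, replacing $g$ by $\max\{0,g\}$ only decreases the integrand — this is where the claimed inequality with $\max\{0,g\}$ comes from.

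First I would introduce a truncation: for $\eta>0$ set $\zeta_\eta:=\zeta\cdot\chi_\eta(z)$ where $\chi_\eta$ is a smooth (or Lipschitz) cutoff with $\chi_\eta(s)=0$ for $s\le\eta/2$, $\chi_\eta(s)=1$ for $s\ge\eta$, and $0\le\chi_\eta\le1$. Then $\zeta_\eta\in W_-^{1,p}(\Omega)$ (the product of a $W^{1,p}$-function with a bounded Lipschitz function of $z\in W^{1,p}$, using $W^{1,p}(\Omega)\hookrightarrow C(\ol\Omega)$ to control things), and $\{\zeta_\eta=0\}\supseteq\{z\le\eta/2\}\supseteq\{z=0\}$, so $\zeta_\eta$ is an admissible competitor in the hypothesis. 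Applying the hypothesis to $\zeta_\eta$ gives $\int_\Omega(f\cdot\nabla\zeta_\eta+g\zeta_\eta)\dx\ge0$. The chain rule yields $\nabla\zeta_\eta=\chi_\eta(z)\nabla\zeta+\zeta\chi_\eta'(z)\nabla z$, so
\begin{align*}
	0\le\int_\Omega\chi_\eta(z)\big(f\cdot\nabla\zeta+g\zeta\big)\dx+\int_\Omega\zeta\,\chi_\eta'(z)\,(f\cdot\nabla z)\dx.
\end{align*}
In the second integral, $\chi_\eta'\ge0$, $\zeta\le0$ and $f\cdot\nabla z\ge0$ by hypothesis, so that term is $\le0$ and may be discarded, giving $\int_\Omega\chi_\eta(z)(f\cdot\nabla\zeta+g\zeta)\dx\ge0$ for every $\eta>0$.

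Next I would let $\eta\searrow0$. Since $0\le\chi_\eta(z)\le1$ and $\chi_\eta(z)\to\chi_{\{z>0\}}$ pointwise a.e. (as $z\ge0$), dominated convergence — with dominating function $|f||\nabla\zeta|+|g||\zeta|\in L^1(\Omega)$, using $f\in L^{p'}$, $\nabla\zeta\in L^p$, $g\in L^1$ and $\zeta\in C(\ol\Omega)$ — gives
\begin{align*}
	0\le\int_{\{z>0\}}\big(f\cdot\nabla\zeta+g\zeta\big)\dx=\int_\Omega\big(f\cdot\nabla\zeta+g\zeta\big)\dx-\int_{\{z=0\}}\big(f\cdot\nabla\zeta+g\zeta\big)\dx.
\end{align*}
On $\{z=0\}$ we have $\nabla z=0$ a.e. and hence, since $\{f=0\}\supseteq\{z=0\}$, also $f=0$ a.e. there, so $f\cdot\nabla\zeta=0$ a.e. on $\{z=0\}$; therefore the last integral equals $\int_{\{z=0\}}g\zeta\dx$. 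Rearranging, $\int_\Omega(f\cdot\nabla\zeta+g\zeta)\dx\ge\int_{\{z=0\}}g\zeta\dx$, and finally, since $\zeta\le0$ on $\{z=0\}$, we have $g\zeta\ge\max\{0,g\}\zeta$ pointwise there, which yields the assertion.

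The step I expect to require the most care is the justification that $\zeta_\eta$ is genuinely an admissible test function — i.e.\ that $\zeta\chi_\eta(z)\in W^{1,p}(\Omega)$ with the product/chain rule valid and with the correct zero set — since this rests on the continuity of $z$ and on choosing $\chi_\eta$ so that $\{\zeta_\eta=0\}\supseteq\{z=0\}$ holds in the a.e.\ sense of \eqref{eqn:setInclusion}. The passage $\eta\searrow0$ and the vanishing of $f\cdot\nabla\zeta$ on $\{z=0\}$ are then routine. If one prefers to avoid smoothness of $\chi_\eta$, a piecewise-linear cutoff works equally well since compositions of Lipschitz functions with $W^{1,p}$-functions stay in $W^{1,p}$.
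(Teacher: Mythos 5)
Your proof is correct: the truncation $\zeta_\eta=\zeta\,\chi_\eta(z)$ is admissible in the hypothesis, the sign argument $\zeta\,\chi_\eta'(z)\,f\cdot\nabla z\leq 0$ lets you drop the extra chain-rule term, dominated convergence and $f=0$ a.e.\ on $\{z=0\}$ give $\int_\Omega(f\cdot\nabla\zeta+g\zeta)\dx\geq\int_{\{z=0\}}g\zeta\dx$, and $g\zeta\geq\max\{0,g\}\zeta$ for $\zeta\leq 0$ finishes the claim. The paper itself offers no proof (it cites [HK13]), and your cutoff argument is exactly the standard route for this kind of statement, so there is nothing to contrast. One cosmetic remark: the fact that $f=0$ a.e.\ on $\{z=0\}$ is a direct hypothesis ($\{f=0\}\supseteq\{z=0\}$) and does not need to be routed through ``$\nabla z=0$ a.e.\ on $\{z=0\}$''.
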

	\begin{remark}
		In [HK13], $g$ is assumed to be in $L^p(\Omega)$. But the proof extends to $g\in L^1(\Omega)$ without any modifications.
	\end{remark}
	
\section{Analysis of the hyperbolic-parabolic system}
\label{section:main}
\subsection{Notion of weak solutions and existence theorem}
\label{section:weakNotion}
	
	\textbf{Assumptions on the coefficients}\\
	Besides the general setting introduced at the beginning of the previous section,
	we now state the conditions needed for the coefficient functions in the PDE system \eqref{eqn:pde1}-\eqref{eqn:pde2}
	in order to develop a weak notion and to prove existence results:
	
	We assume $f\in C^1([0,1],\R_+)$ for the damage potential function (occurring in \eqref{eqn:pde2}) and $W$ to be given by \eqref{eqn:defW}
	with $h\in C^1([0,1];\R)$ and $h\geq \eta$ on $[0,1]$
	for a constant $\eta>0$.
	Moreover, we will use the assumption $h'\geq 0$ on $[0,1]$.
	The stiffness tensor $\CC$ should satisfy the usual symmetry and coercivity assumptions, i.e.
	$\CC_{ijlk}=\CC_{jilk}=\CC_{lkij}$ and $e:\CC e\geq c_0|e|^2$ for all $e\in \R_\mathrm{sym}^{d\times d}$
	and constant $c_0>0$.
	
	\textbf{Weak formulation}\\
	The main idea for a weak formulation is to rewrite the doubly nonlinear
	differential inclusion in system \eqref{eqn:pde2}-\eqref{eqn:pde4} into a variational inequality and a total energy inequality.
	This kind of notion was introduced in \cite{WIAS1520} and is adapted to the present situation.
%	in the following (see Proposition \ref{prop:weakNotion} for a justification).

	\begin{lemma}
	\label{lemma:notions}
		Let the data
		\begin{align*}
			&u^0\in H^1(\Omega;\R^n),\;v^0\in L^2(\Omega;\R^n),\;z^0\in W^{1,p}(\Omega)
			\text{ with }0\leq z^0\leq 1\text{ a.e. in }\Omega,\\
			&\ell\in L^2(0,T;L^2(\Omega;\R^n)),\\
			&b\in H^1(0,T;H^1(\Omega;\R^n))\cap H^2(0,T;L^2(\Omega;\R^n))
		\end{align*}
		and
		the functions
		\begin{align*}
			&u\in L^2(0,T;H^2(\Omega;\Rn))\cap H^1(0,T;H^1(\Omega;\Rn))\cap H^2(0,T;L^2(\Omega;\Rn)),\\
			&z\in L^p(0,T;W^{2,p}(\Omega))\cap W^{1,p}(0,T;W^{1,p}(\Omega)),\\
			&\xi,\varphi\in L^2(0,T;L^2(\Omega))
		\end{align*}
		be given.
		The following statements are equivalent:
		\begin{enumerate}
			\item[(i)] The functions $(u,z,\xi,\varphi)$ satisfy the PDE system \eqref{eqn:PDE}-\eqref{eqn:PDEIBC} pointwise in an a.e. sense
				to the data $(u^0,v^0,z^0,\ell,b)$.
			\item[(ii)] The functions $(u,z,\xi,\varphi)$ satisfy the following properties:
				\begin{itemize}
					\item[$\bullet$] initial and Dirichlet boundary conditions:
						\begin{align}
						\label{eqn:initialBoundary}
							u(0)=u^0,\;\partial_t u(0)=v^0,\;z(0)=z^0,\;u=b\text{ on }\GammaD\times(0,T),
						\end{align}
					\item[$\bullet$] force balance:
						\begin{align}
						\label{eqn:weakMomentumBalance}
							\langle \partial_{tt}u,\zeta\rangle_{H_\GammaD^1}+\int_\Omega W_{,e}(\e(u),z):\e(\zeta)\dx=\int_\Omega \ell\cdot\zeta\dx
						\end{align}
						for all $\zeta\in H_{\Gamma_\mathrm{D}}^1(\Omega;\R^n)$ and for a.e. $t\in(0,T)$,
					\item[$\bullet$] damage evolution law:
						\begin{align}
						\label{eqn:weakDamageLaw}
							0=\int_\Omega\Big((\partial_t z)\zeta+|\nabla z|^{p-2}\nabla z\cdot\nabla\zeta+W_{,z}(\e(u),z)\zeta+f'(z)\zeta\Big)\dx
							+\langle\xi,\zeta\rangle_{W^{1,p}}+\langle\varphi,\zeta\rangle_{W^{1,p}}
						\end{align}
						for all $\zeta\in W^{1,p}(\Omega)$ and for a.e. $t\in(0,T)$,
					\item[$\bullet$] conditions for the subgradients and the damage variable:
						\begin{subequations}
						\label{eqn:weakVarIneq1}
						\begin{align}
						\label{eqn:weakVarIneq1a}
							\;\qquad\langle\varphi(t),\zeta\rangle_{W^{1,p}}&\leq 0
							&&\text{for all }\zeta\in W_-^{1,p}(\Omega),\;\text{a.e. }t\in(0,T),\;\\
						\label{eqn:weakVarIneq1b}
							\partial_t z&\leq 0
							&&\text{a.e. in }\Omega\times(0,T)
						\end{align}
						\end{subequations}
						and
						\begin{subequations}
						\label{eqn:weakVarIneq2}
						\begin{align}
							\qquad\langle\xi(t),\zeta-z(t)\rangle_{W^{1,p}}&\leq 0\quad\quad\;
						\label{eqn:weakVarIneq2a}
							&&\text{for all }\zeta\in W_+^{1,p}(\Omega),\;\text{a.e. }t\in(0,T),\;\;\qquad\\
						\label{eqn:weakVarIneq2b}
							z&\geq 0
							&&\text{a.e. in }\Omega\times(0,T),
						\end{align}
						\end{subequations}
					\item[$\bullet$]
						total energy-dissipation balance:
						\begin{align}
							\C F(t)+\C K(t)+\C D(0,t)= 
								\C F(0)+\C K(0)+\C W_{ext}(0,t)
						\label{eqn:weakEnergyBalance}
						\end{align}
						for a.e. $t\in(0,T)$ with
						\begin{align*}
							&\text{free energy:}
								&&\C F(t):=\int_\Omega\bl\frac 1p|\nabla z(t)|^p+W(\e(u(t)),z(t))+f(z(t))\br\dx,\\
							&\text{kinetic energy:}
								&&\C K(t):=\int_\Omega\frac 12|\partial_t u(t)|^2\dx,\\
							&\text{dissipated energy:}
								&&\C D(0,t):=\int_0^t\int_\Omega|\partial_t z|^2\dxs,\\
							&\text{external work:}
								&&\C W_{ext}(0,t):=\int_0^t\int_\Omega W_{,e}(\e(u),z):\e(\partial_t b)\dxs
									-\int_0^t\int_{\Omega}\partial_t u\cdot \partial_{tt}b\dxs\\
									&&&\qquad\qquad\qquad+\int_\Omega \partial_t u(t)\cdot \partial_t b(t)\dx
									-\int_\Omega \partial_t u(0)\cdot \partial_t b(0)\dx\\
									&&&\qquad\qquad\qquad+\int_0^t\int_{\Omega}\ell\cdot \partial_t (u-b)\dxs.
						\end{align*}
				\end{itemize}
		\end{enumerate}
	\end{lemma}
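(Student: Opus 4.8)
The plan is to establish the equivalence by proving the two implications separately, with the direction (i)~$\Rightarrow$~(ii) being essentially a matter of testing the strong equations against admissible functions and integrating by parts, and the direction (ii)~$\Rightarrow$~(i) requiring us to reconstruct the pointwise structure of the subgradients from the variational inequalities together with the energy balance. Throughout, the given regularity ($u\in L^2(H^2)\cap H^2(L^2)$, $z\in L^p(W^{2,p})\cap W^{1,p}(W^{1,p})$, $\xi,\varphi\in L^2(L^2)$) is exactly what is needed to make all integration-by-parts manipulations and the duality pairings $\langle\cdot,\cdot\rangle_{H^1_{\GammaD}}$ and $\langle\cdot,\cdot\rangle_{W^{1,p}}$ legitimate, since $W^{2,p}(\Omega)\hookrightarrow C^1(\ol\Omega)$ for $p>n$ and the Neumann condition $\nabla z\cdot\nu=0$ can be read off the trace.

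For (i)~$\Rightarrow$~(ii): The initial and Dirichlet conditions \eqref{eqn:initialBoundary} are immediate from \eqref{eqn:PDEIBC}. The weak force balance \eqref{eqn:weakMomentumBalance} follows by multiplying \eqref{eqn:pde1} with $\zeta\in H^1_{\GammaD}(\Omega;\Rn)$, integrating over $\Omega$, integrating by parts in the divergence term, and using the Neumann condition \eqref{eqn:PDEIBC2} to kill the boundary integral over $\GammaN$. Similarly, \eqref{eqn:weakDamageLaw} comes from testing \eqref{eqn:pde2} with $\zeta\in W^{1,p}(\Omega)$, integrating by parts in $\Delta_p z$, and using \eqref{eqn:PDEIBC3}. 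The inequalities \eqref{eqn:weakVarIneq1}, \eqref{eqn:weakVarIneq2} are just the variational reformulations of the subdifferential inclusions \eqref{eqn:pde3}--\eqref{eqn:pde4}: from $\varphi\in\partial I_{(-\infty,0]}(\partial_t z)$ one reads off $\partial_t z\le0$ a.e.\ and $\varphi\ge0$ a.e.\ with $\varphi\,\partial_t z=0$, which gives \eqref{eqn:weakVarIneq1a}; from $\xi\in\partial I_{[0,\infty)}(z)$ one reads off $z\ge0$ a.e.\ and $\xi\le0$ with $\xi z=0$, which gives \eqref{eqn:weakVarIneq2a} after integrating $\xi(\zeta-z)=\xi\zeta\le0$ for $\zeta\ge0$. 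Finally, the energy balance \eqref{eqn:weakEnergyBalance} is obtained by the standard energy method: test \eqref{eqn:weakMomentumBalance} with $\partial_t u-\partial_t b$ (which lies in $H^1_{\GammaD}$), test \eqref{eqn:weakDamageLaw} with $\partial_t z$, add the two, integrate in time from $0$ to $t$, and use $\xi\,\partial_t z=0$ (since $\partial_t z=0$ wherever $\xi\neq0$, i.e.\ wherever $z=0$, by irreversibility) and $\varphi\,\partial_t z=0$ to eliminate the subgradient contributions; the boundary terms in $b$ are exactly collected into $\C W_{ext}$. The identity $\frac{d}{dt}\int_\Omega\frac12|\partial_t u|^2 = \langle\partial_{tt}u,\partial_t u\rangle$ is justified by the regularity $u\in H^2(L^2)$ together with $\partial_t u\in L^2(H^1)$.

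For (ii)~$\Rightarrow$~(i): Here \eqref{eqn:weakMomentumBalance} together with $u\in L^2(H^2)$ immediately yields \eqref{eqn:pde1} a.e.\ and the Neumann condition \eqref{eqn:PDEIBC2} by choosing first $\zeta\in C_c^\infty(\Omega;\Rn)$ and then general $\zeta\in H^1_{\GammaD}$; similarly \eqref{eqn:weakDamageLaw} with $z\in L^p(W^{2,p})$ produces an a.e.\ identity $\partial_t z-\Delta_p z+W_{,z}+f'(z)+\xi+\varphi=0$ together with $\nabla z\cdot\nu=0$. It remains to show that $\xi\in\partial I_{[0,\infty)}(z)$ and $\varphi\in\partial I_{(-\infty,0]}(\partial_t z)$ pointwise a.e. From \eqref{eqn:weakVarIneq1a}, choosing $\zeta\in W^{1,p}_-$ arbitrary (and then $\zeta\in C_c^\infty$ with either sign) gives $\varphi\ge0$ a.e.; combined with \eqref{eqn:weakVarIneq1b} this is half of the inclusion. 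From \eqref{eqn:weakVarIneq2a}, taking $\zeta=z+\psi$ and $\zeta=z-\psi$ with $\psi\ge0$ small yields $\langle\xi,\psi\rangle=0$ for a dense set, hence $\xi\le0$ a.e.\ wherever... — and here is the main obstacle: the variational inequalities alone only give $\varphi\ge0$, $\xi\le0$, $z\ge0$, $\partial_t z\le0$, but \emph{not} the complementarity relations $\varphi\,\partial_t z=0$ and $\xi\,z=0$. These must be recovered from the energy balance \eqref{eqn:weakEnergyBalance}. The strategy is: test the already-established a.e.\ equations \eqref{eqn:pde1} (with $\partial_t u-\partial_t b$) and \eqref{eqn:pde2} (with $\partial_t z$), integrate in time, and compare the resulting identity with \eqref{eqn:weakEnergyBalance}; everything cancels except the term $\int_0^t\int_\Omega(\xi+\varphi)\partial_t z$, which must therefore vanish. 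Since $\varphi\ge0$ and $\partial_t z\le0$ we have $\varphi\,\partial_t z\le0$; since $\xi\le0$ and — using that $z\ge0$ with $z(0)=z^0$ and $\partial_t z\le0$ forces $\partial_t z=0$ on $\{z=0\}$... more carefully, one argues $\xi\,\partial_t z\le 0$ needs the sign of $\xi$ off $\{z=0\}$, so one first shows $\xi=0$ on $\{z>0\}$ by a localization/truncation in \eqref{eqn:weakVarIneq2a} (choosing $\zeta=z\pm\e\chi$ supported where $z>\e$), giving $\xi\le0$ everywhere and $\xi=0$ where $z>0$, hence $\xi z=0$; then on $\{z=0\}$ irreversibility and $z\ge0$, $z(0)=z^0\ge0$ give $\partial_t z=0$ a.e.\ on $\{z(\cdot,t)=0\}$ for a.e.\ $t$ (a.e.\ a point stays at $0$ once it reaches $0$), so $\xi\,\partial_t z=0$. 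Consequently $\int_0^t\int_\Omega\varphi\,\partial_t z=0$ with a sign-definite integrand forces $\varphi\,\partial_t z=0$ a.e., which combined with $\varphi\ge0$, $\partial_t z\le0$ gives $\varphi\in\partial I_{(-\infty,0]}(\partial_t z)$; and $\xi z=0$ with $\xi\le0$, $z\ge0$ gives $\xi\in\partial I_{[0,\infty)}(z)$. The delicate point requiring care is the measure-theoretic argument that $\partial_t z=0$ a.e.\ on the set $\{z=0\}$, which uses that $z$ is absolutely continuous in time with values in $L^p$ and non-increasing, so that the function $t\mapsto z(x,t)$ is, for a.e.\ $x$, monotone; one concludes via a Fubini argument on $\Omega_T$.
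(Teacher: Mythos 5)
Your proposal is correct and follows essentially the same route as the paper: both directions hinge on the energy identity obtained by testing the force balance with $\partial_t u-\partial_t b$ and the damage law with $\partial_t z$, and the complementarity relations are recovered by comparing this identity with the total energy-dissipation balance, using that $\xi=0$ on $\{z>0\}$, $\partial_t z=0$ a.e.\ on $\{z=0\}$ (hence $\xi\,\partial_t z=0$), and the sign-definiteness of $\varphi\,\partial_t z$. The only cosmetic difference is that you keep the time-integrated identity while the paper localizes to a.e.\ $t$ before concluding; both are fine.
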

	\begin{remark}
	\label{remark:varIneq}
		The essential feature in the notion presented in (ii) is that we only require
		$\langle\varphi,\zeta\rangle_{W^{1,p}}\leq 0$ in \eqref{eqn:weakVarIneq1} instead of the full variational inequality
		$\langle\varphi,\zeta-\partial_t z\rangle_{W^{1,p}}\leq 0$.
		In view of the complementarity formulation \eqref{eqn:complementarity} it means that the condition $\langle\varphi,\partial_t z\rangle_{W^{1,p}}=0$ is dropped.
		However, as we will see in the proof, the total energy-dissipation balance allows us to recover this identity.
	\end{remark}

	\begin{remark}
		Let us give a physical interpretation of the term $\C W_{ext}(0,t)$.
		To this end, remember that the Cauchy stress tensor is given by
		$$
			\sigma=W_{,e}(\e(u),z).
		$$
		Now, by using integration by parts, the force balance equation \eqref{eqn:pde1} as well as
		the boundary conditions in \eqref{eqn:PDEIBC}, $\C W_{ext}(0,t)$ transforms into
			\begin{align*}
				\C W_{ext}(0,t)={}&\int_0^t\int_{\GammaD}\big(\sigma\cdot \partial_t b\big)\cdot\nu\dxs
					+\int_0^t\int_\Omega\Big(-\di(\sigma)\cdot \partial_t b+\partial_{tt }u\cdot \partial_t b\Big)\dxs\\
					&+\int_0^t\int_{\Omega}\ell\cdot \partial_t (u-b)\dxs\\
					={}&\int_0^t\int_{\GammaD}\big(\sigma\cdot\nu\big)\cdot \partial_t b\dxs+\int_0^t\int_{\Omega}\ell\cdot \partial_t u\dxs.
			\end{align*}
			The first integral term may be interpreted as the work performed by the prescribed (time-dependent) Dirichlet boundary data
			and the second integral term is the work performed by the external forces $\ell$.
	\end{remark}
	\textbf{Proof of Lemma \ref{lemma:notions}}\\
		\underline{To ``(ii) implies (i)'':}
		
		The equations \eqref{eqn:pde1} and \eqref{eqn:pde2} can be recovered by standard arguments
		from \eqref{eqn:weakMomentumBalance} and \eqref{eqn:weakDamageLaw}.
		The inclusion \eqref{eqn:pde3} follows by the variational inequality \eqref{eqn:weakVarIneq2}.
		
		It remains to prove the validity of the inclusion \eqref{eqn:pde4}.
		As mentioned in Remark \ref{remark:varIneq}, we need to show $\langle\varphi,\partial_t z\rangle_{W^{1,p}}=0$.
		
		To this end, denote the free energy functional without the indicator part (see \eqref{eqn:freeEnergy}) by
		$$
			\widetilde{\C F}(u,z):=\int_{\Omega}\bl\frac 1p|\nabla z|^p+W(\e(u),z)+f(z)\br\dx.
		$$
		The G\^{a}teaux derivatives $\mathrm d_u\widetilde{\C F}$ and $\mathrm d_z\widetilde{\C F}$ are given as follows:
		\begin{align*}
			&\langle\mathrm d_u\widetilde{\C F}(u,z),\zeta\rangle_{H^1}
				=\int_\Omega W_{,e}(\e(u),z):\e(\zeta)\dx,\\
			&\langle\mathrm d_z\widetilde{\C F}(u,z),\zeta\rangle_{W^{1,p}}
				=\int_\Omega\bl|\nabla z|^{p-2}\nabla z\cdot\nabla\zeta+W_{,z}(\e(u),z)\zeta+f'(z)\zeta\br\dx.
		\end{align*}
		Testing \eqref{eqn:weakMomentumBalance} with $\partial_t u-\partial_t b$ (note that $\partial_t u-\partial_t b=0$ on $\GammaD$) yields
		\begin{align}
			\langle\mathrm d_u\widetilde{\C F}(u,z),\partial_t u\rangle_{H^1}
				={}&\langle\mathrm d_u\widetilde{\C F}(u,z),\partial_t b\rangle_{H^1}
				+\int_\Omega \ell\cdot\partial_t (u-b)\dx
				-\int_\Omega\frac{\mathrm d}{\mathrm dt}\frac 12|\partial_t u|^2\dx\notag\\
		\label{eqn:test1}
			&+\langle \partial_{tt} u,\partial_t b\rangle_{H^1}.
		\end{align}
		Testing \eqref{eqn:weakDamageLaw} with $\partial_t z$ shows
		\begin{align}
		\label{eqn:test2}
			 \langle \mathrm d_z\widetilde{\C F}(u,z),\partial_t z\rangle_{W^{1,p}}=-\int_\Omega|\partial_t z|^2\dx
			-\langle\xi,\partial_t z\rangle_{W^{1,p}}-\langle\varphi,\partial_t z\rangle_{W^{1,p}}.
		\end{align}
		We obtain by adding \eqref{eqn:test1} and \eqref{eqn:test2}, integrating over time and using the chain rule
		for $\frac{\mathrm d}{\mathrm d t}\widetilde{\C F}(u(t),z(t))$:
		\begin{align}
			\widetilde{\C F}(u(t),z(t))-\widetilde{\C F}(u(0),z(0))
			={}&\int_0^t\Big(\langle\mathrm d_u\widetilde{\C F}(u(s),z(s)),\partial_t u(s)\rangle_{H^1}
				+\langle\mathrm d_z\widetilde{\C F}(u(s),z(s)),\partial_t z(s)\rangle_{W^{1,p}}\Big)\ds\notag\\
			={}&
				-\int_\Omega\frac 12|\partial_t u(t)|^2\dx+\int_\Omega\frac 12|\partial_t u(0)|^2\dx
				+\int_0^t\int_\Omega W_{,e}(\e(u),z):\e(\partial_t b)\dxs\notag\\
				&+\int_0^t\int_\Omega \ell\cdot\partial_t (u-b)\dxs
				+\int_0^t\int_\Omega \partial_{tt} u\cdot \partial_t b\dxs
				-\int_0^t\int_\Omega|\partial_t z|^2\dxs\notag\\
				&-\langle\xi,\partial_t z\rangle_{W^{1,p}}-\langle\varphi,\partial_t z\rangle_{W^{1,p}}.
		\label{eqn:energyDifference}
		\end{align}
		Furthermore, integration by parts in time shows
		\begin{align}
			\int_0^t\int_\Omega \partial_{tt}u\cdot \partial_t b\dxs
			=-\int_0^t\int_\Omega \partial_{t}u\cdot \partial_{tt}b\dxs
				+\int_\Omega \partial_{t}u(t)\cdot \partial_t b(t)\dx
				-\int_\Omega \partial_{t}u(0)\cdot \partial_t b(0)\dx.
		\label{eqn:uttbt}
		\end{align}
		By applying \eqref{eqn:uttbt} to \eqref{eqn:energyDifference}, we obtain
		\begin{align*}
			\C F(t)-\C F(0)
			= -\C K(t)+\C K(0)
				+\C W_{ext}(0,t)
				-\C D(0,t)
				-\langle\xi,\partial_{t}z\rangle_{W^{1,p}}-\langle\varphi,\partial_{t}z\rangle_{W^{1,p}}.
		\end{align*}
		Subtracting the total energy-dissipation balance \eqref{eqn:weakEnergyBalance} from above yields
		\begin{align}
			\langle\xi,\partial_{t}z\rangle_{W^{1,p}}+\langle\varphi,\partial_{t}z\rangle_{W^{1,p}}=0.
		\label{eqn:xiPhi}
		\end{align}
		In order to obtain $\langle\varphi,\partial_{t}z\rangle_{W^{1,p}}=0$, we will prove $\langle\xi,\partial_{t}z\rangle_{W^{1,p}}=0$ and then applying \eqref{eqn:xiPhi}.
		Since $\xi\in L^2(0,T;L^2(\Omega))$ by assumption, we find by \eqref{eqn:weakVarIneq2}
		$$
			\int_\Omega\xi(\zeta-z)\dx\leq 0
		$$
		for all $\zeta\in L_+^2(\Omega)$ and for a.e. $t\in(0,T)$.
		This shows $\xi=0$ a.e. in $\{z>0\}$ and $\xi\leq 0$ a.e. in $\{z=0\}$.
		Since $\partial_{t}z=0$ holds a.e. in $\{z=0\}$, we obtain
		$$
			\langle\xi,\partial_{t}z\rangle_{W^{1,p}}=\int_\Omega\xi \partial_{t}z\dx=0.
		$$
		\underline{To ``(i) implies (ii)'':}
		
		The force balance equation \eqref{eqn:weakMomentumBalance}, the damage evolution law \eqref{eqn:weakDamageLaw}
		and the variational inequality \eqref{eqn:weakVarIneq2} follow from \eqref{eqn:pde1},\eqref{eqn:pde2}, \eqref{eqn:pde3} and the boundary conditions
		\eqref{eqn:PDEIBC1}-\eqref{eqn:PDEIBC3} without much effort.
		It remains to show the variational property \eqref{eqn:weakVarIneq1} and the total energy-dissipation balance \eqref{eqn:weakEnergyBalance}.
		
		In fact, \eqref{eqn:pde4} implies the complementarity formulation
		\begin{align}
			\partial_{t}z\leq 0,\qquad
			\langle\varphi,\partial_{t}z\rangle_{W^{1,p}}=0,\qquad
			\langle\varphi,\zeta\rangle_{W^{1,p}}\geq 0
		\label{eqn:complementarity2}
		\end{align}
		for all $\zeta\in W_-^{1,p}(\Omega)$.
		In particular, \eqref{eqn:weakVarIneq1} is shown.
		
		To show \eqref{eqn:weakEnergyBalance}, we consider the calculation \eqref{eqn:energyDifference} which follows with the same arguments as in (i).
		Taking into account the conditions $\langle\varphi,\partial_{t}z\rangle_{W^{1,p}}=0$ (see \eqref{eqn:complementarity2}) and
		$\langle\xi,\partial_{t}z\rangle_{W^{1,p}}=0$ (follows as in (i)), we obtain the total energy-dissipation balance.
	\ep
	
	From Lemma \ref{lemma:notions} we make the following observations:
	\begin{itemize}
		\item[$\bullet$]
			The weak formulation in Lemma \ref{lemma:notions} (ii) requires much less regularity
			as assumed there. Since we avoid the full variational inequality $\langle\varphi,\zeta-\partial_{t}z\rangle_{W^{1,p}}\leq 0$
			by means of the total energy-dissipation balance, the notion makes still sense for
			$z\in L^\infty(0,T;W^{1,p}(\Omega))\cap H^1(0,T;L^2(\Omega))$.
			This is very important because the a priori estimates will give no better regularity in this weak setting
			(remember that $\Omega$ is only a bounded Lipschitz domain).
		\item[$\bullet$]
			The regularity
			$u\in L^\infty(0,T;H^1(\Omega;\R^n))\cap W^{1,\infty}(0,T;L^2(\Omega;\R^n))\cap H^2(0,T;(H_{\Gamma_\mathrm{D}}^1(\Omega;\R^n))^*)$
			for $u$ is sufficient for the notion in Lemma \ref{lemma:notions} (ii).
			We observe that the term $\C W_{ext}$ avoids the second time-derivative of $u$. In fact, we may also write
			(by using \eqref{eqn:uttbt})
			\begin{align*}
			\begin{split}
				\C W_{ext}(0,t)
				={}&\int_0^t\int_\Omega W_{,e}(\e(u),z):\e(\partial_{t}b)\dxs
					+\int_0^t\langle \partial_{tt}u(s),\partial_{t}b(s)\rangle_{H^1}\ds\\
					&+\int_0^t\int_{\Omega}\ell\cdot \partial_{t}(u-b)\dxs
			\end{split}
			\end{align*}
			provided that $u\in H^2(0,T;(H^1(\Omega;\R^n))^*)$ (note that $\partial_{t}b$ is not necessarily $0$ on $\GammaD$).
		\item[$\bullet$]
			Since $\xi$ is assumed to be in $L^2(0,T;L^2(\Omega))$, the $W^{1,p}$-dual product rewrites as
			$$
				\langle\xi(t),\zeta\rangle_{W^{1,p}}
				=\int_\Omega \xi(t)\zeta\dx.
			$$
			The regularity assumptions for $\xi$ can be weakened to $L^1(0,T;L^1(\Omega))$ since the
			integral term on the right-hand side exists in this case for all $\zeta\in W^{1,p}(\Omega)$ provided that $p\in(n,\infty)$
			by employing the embedding $W^{1,p}(\Omega)\hookrightarrow L^\infty(\Omega)$.
		\item[$\bullet$]
			Conditions \eqref{eqn:weakDamageLaw} and \eqref{eqn:weakVarIneq1a} can be reformulated as one inequality as
			\begin{align}
				0\leq\int_\Omega\Big((\partial_{t}z)\zeta+|\nabla z|^{p-2}\nabla z\cdot\nabla\zeta+W_{,z}(\e(u),z)\zeta+f'(z)\zeta\Big)\dx
				+\langle\xi,\zeta\rangle_{W^{1,p}}
				\label{eqn:weakDamageLaw2}
			\end{align}
			holding for all $\zeta\in W_-^{1,p}(\Omega)$ and for a.e. $t\in(0,T)$.
			This eliminates $\varphi$ in the weak formulation.
			%phi eliminieren
	\end{itemize}
	A further not so obvious observation is stated in the following lemma:
	\begin{lemma}
		Let $u$, $z$, $\xi$ and $\varphi$ as well as $u^0$, $v^0$, $z^0$, $\ell$ and $b$ be as in Lemma \ref{lemma:notions}.
		Suppose that the conditions (ii) in Lemma \ref{lemma:notions} except the total energy-dissipation balance are satisfied.
		Instead we assume for a.e. $t\in(0,T)$ the total energy-dissipation inequality
		\begin{align}
			\C F(t)+\C K(t)+\C D(0,t)\leq
				\C F(0)+\C K(0)+\C W_{ext}(0,t),
		\label{eqn:weakEnergyIneq}
		\end{align}
		where the functions $\C F(\cdot)$, $\C K(0,\cdot)$, $\C D(0,\cdot)$ and $\C W_{ext}(0,\cdot)$
		are defined in Lemma \ref{lemma:notions} (ii).
		
		Then, the total energy-dissipation balance is satisfied, i.e., \eqref{eqn:weakEnergyIneq} is an equality.
	\end{lemma}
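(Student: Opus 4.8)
The plan is to notice that the energy computation carried out in the proof of Lemma~\ref{lemma:notions} leading to \eqref{eqn:energyDifference} never invokes the total energy-dissipation balance, so it is available verbatim under the present hypotheses, and then to pin down the sign of the two residual dual pairings $\langle\xi,\partial_t z\rangle_{W^{1,p}}$ and $\langle\varphi,\partial_t z\rangle_{W^{1,p}}$.

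First I would test the force balance \eqref{eqn:weakMomentumBalance} with $\partial_t u-\partial_t b$ and the damage evolution law \eqref{eqn:weakDamageLaw} with $\partial_t z$, add the two identities, integrate over $(0,t)$, and apply the chain rule to $s\mapsto\widetilde{\C F}(u(s),z(s))$ together with the integration by parts \eqref{eqn:uttbt}. Exactly as in \eqref{eqn:energyDifference}–\eqref{eqn:xiPhi}, this produces, for a.e.\ $t\in(0,T)$,
\begin{align*}
	\C F(t)+\C K(t)+\C D(0,t)=\C F(0)+\C K(0)+\C W_{ext}(0,t)-\langle\xi,\partial_t z\rangle_{W^{1,p}}-\langle\varphi,\partial_t z\rangle_{W^{1,p}},
\end{align*}
and I emphasise that only \eqref{eqn:initialBoundary}, \eqref{eqn:weakMomentumBalance}, \eqref{eqn:weakDamageLaw} and the regularity assumed in Lemma~\ref{lemma:notions} enter here.

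Next I would recover the signs of the two pairings as in the ``(ii) implies (i)'' part of Lemma~\ref{lemma:notions}: from \eqref{eqn:weakVarIneq2} and $\xi\in L^2(0,T;L^2(\Omega))$ one gets $\xi=0$ a.e.\ on $\{z>0\}$ and $\xi\le0$ a.e.\ on $\{z=0\}$, and since $\partial_t z=0$ a.e.\ on $\{z=0\}$ (because $z\ge0$ and the time derivative of a nonnegative Sobolev-in-time function vanishes a.e.\ on its zero set) this gives $\langle\xi,\partial_t z\rangle_{W^{1,p}}=\int_\Omega\xi\,\partial_t z\dx=0$. On the other hand \eqref{eqn:weakVarIneq1a} together with $\varphi\in L^2(0,T;L^2(\Omega))$ forces $\varphi\ge0$ a.e., so with \eqref{eqn:weakVarIneq1b} I obtain $\langle\varphi,\partial_t z\rangle_{W^{1,p}}=\int_\Omega\varphi\,\partial_t z\dx\le0$. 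Substituting into the displayed identity yields
\begin{align*}
	\C F(t)+\C K(t)+\C D(0,t)=\C F(0)+\C K(0)+\C W_{ext}(0,t)-\langle\varphi,\partial_t z\rangle_{W^{1,p}}\ge\C F(0)+\C K(0)+\C W_{ext}(0,t)
\end{align*}
for a.e.\ $t\in(0,T)$.

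Finally, comparing this with the assumed inequality \eqref{eqn:weakEnergyIneq}, which is precisely the opposite estimate, both must collapse to equalities, which is the claim (and, in passing, $\langle\varphi,\partial_t z\rangle_{W^{1,p}}=0$ for a.e.\ $t$). I do not expect a genuine obstacle: the only points requiring a little care are the justification that the Lemma~\ref{lemma:notions} energy computation is logically independent of the balance with which it is usually paired, and the standard fact that $\partial_t z=0$ a.e.\ on $\{z=0\}$. \ep
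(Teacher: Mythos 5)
Your proposal is correct and follows essentially the same route as the paper: reuse the energy identity \eqref{eqn:energyDifference} (which indeed does not depend on the energy balance), show $\langle\xi,\partial_t z\rangle_{W^{1,p}}=0$ exactly as in the proof of Lemma \ref{lemma:notions}, and obtain $\langle\varphi,\partial_t z\rangle_{W^{1,p}}\le 0$ to get the reverse inequality. The only cosmetic difference is that the paper gets the last sign by testing \eqref{eqn:weakVarIneq1a} directly with $\zeta=\partial_t z\in W_-^{1,p}(\Omega)$, whereas you first deduce $\varphi\ge 0$ a.e.\ and then argue pointwise; both are valid under the stated regularity.
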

	\begin{proof}
		We find \eqref{eqn:energyDifference} and $\langle\xi,\partial_{t}z\rangle_{W^{1,p}}=0$
		with the same argumentation as in the proof of Lemma \ref{lemma:notions} (see ``(ii) implies (i)'').
		
		Then, taking
		$$
			\langle\varphi,\partial_{t}z\rangle_{W^{1,p}}\leq0
		$$
		into account (which follows from \eqref{eqn:weakVarIneq1} tested with $\varphi=\partial_{t}z$), 
		we obtain the desired ``$\geq$''-part of the total energy-dissipation balance.
		\ep
	\end{proof}

	These observations motivate the following notion of weak solutions which is equivalent to
	the PDE system \eqref{eqn:PDE}-\eqref{eqn:PDEIBC} provided sufficient regularity.

	\begin{definition}[Notion of weak solutions]
	\label{def:weakSolution}
		Let the data $(u^0,v^0,z^0,\ell,b)$ be given as in Lemma \ref{lemma:notions}.
		A weak solution of the PDE system \eqref{eqn:PDE}-\eqref{eqn:PDEIBC} is a triple $(u,z,\xi)$ of functions
		\begin{align*}
			&u\in L^\infty(0,T;H^1(\Omega;\R^n))\cap W^{1,\infty}(0,T;L^2(\Omega;\R^n))\cap H^2(0,T;(H_{\Gamma_\mathrm{D}}^1(\Omega;\R^n))^*),\\
			&z\in L^\infty(0,T;W^{1,p}(\Omega))\cap H^1(0,T;L^2(\Omega)),\\
			&\xi\in L^1(\Omega\times(0,T))
		\end{align*}
		satisfying \eqref{eqn:initialBoundary}, \eqref{eqn:weakMomentumBalance}, \eqref{eqn:weakVarIneq1b}, \eqref{eqn:weakVarIneq2},
		\eqref{eqn:weakDamageLaw2} and the total energy-dissipation inequality
		\begin{align}
			\C F(t)+\C K(t)+\C D(0,t)\leq
				\C F(0)+\C K(0)+\C W_{ext}(0,t)
		\label{eqn:weakEnergyInequality}
		\end{align}
		for a.e. $t\in(0,T)$.
	\end{definition}
%	\begin{proposition}
%	\label{prop:weakNotion}
%		Let $(u,z,\xi)$ be a weak solution. Furthermore, if additionally
%		\begin{align*}
%			&u\in H^1(0,T;H^1(\Omega;\R^n)),\quad
%			z\in H^1(0,T;W^{1,p}(\Omega)),\quad\;\;
%		\end{align*}
%		then for a.e. $t\in(0,T)$
%		\begin{align*}
%			\qquad z_t&-\Delta_p z+W_{,z}(\e(u),z)+f'(z)+\xi+\varphi=0\text{ in }\bl W^{1,p}(\Omega)\br^*,\\
%			\qquad \xi&\in\partial I_{W_+^{1,p}(\Omega)}(z),\\
%			\qquad \varphi&\in\partial I_{W_-^{1,p}(\Omega)}(z_t).
%		\end{align*}
%		Moreover, the total energy inequality \eqref{eqn:weak24} becomes an energy balance.
%	\end{proposition}
	
	The main aim of this work is to prove existence of weak solutions in the sense of Definition \ref{def:weakSolution}.
	\begin{theorem}[Existence of weak solutions]
	\label{theorem:mainResult}
		Let the assumptions in Section \ref{section:notation} and on the coefficients $f$, $h$ and $\CC$ be satisfied.
		Furthermore, let the data $(u^0,v^0,z^0,\ell,b)$ according to Lemma \ref{lemma:notions} be given.
		Then, there exists a weak solution of system \eqref{eqn:PDE}-\eqref{eqn:PDEIBC}
		in the sense of Definition \ref{def:weakSolution}.
%		Moreover, the subgradient $\xi$ in Theorem \ref{theorem:mainResult} fulfills the following condition:
%		\begin{align*}
%			\xi=-\mathrm{max}\Big\{0,W_{,z}(\e(u),z)+f'(z)\Big\}\cdot\chi,
%		\end{align*}
%		where the function $\chi\in L^\infty(\Omega\times(0,T))$ satisfies the estimate
%		$$
%			0\leq \chi\leq\chi_{\{z=0\}}\quad\text{a.e. in }\Omega\times(0,T).
%		$$
	\end{theorem}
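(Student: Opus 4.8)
The strategy follows the three-layer approximation scheme announced in the introduction. First I would replace the $p$-Laplacian-governed damage law and, crucially, the force balance by a doubly regularized problem: add a higher-order term $+\sigma\,\mathrm{div}(|\nabla^2 u|\,\text{-type operator})$, or more simply a term $\sigma\langle u,\zeta\rangle_{H^2}$, to \eqref{eqn:weakMomentumBalance} so that the regularized displacement enjoys $L^2(0,T;H^2(\Omega;\Rn))$-regularity, and keep a Moreau–Yosida regularization $(\xi_\delta,\varphi_\delta)$ of the two subgradients. On this regularized level I would set up a semi-implicit time-discretization with step size $\tau$: at each time step solve a coupled elliptic system for $(u^k,z^k)$, where the $u$-equation is linear-elliptic (hence solvable by Lax–Milgram in $H^2_{\GammaD}$) and the $z$-equation is the Euler–Lagrange equation of a convex-plus-lower-order-perturbation functional minimized over $W^{1,p}_+(\Omega)$, solvable by the direct method. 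The hyperbolic term $u_{tt}$ is discretized by the standard second difference quotient $\tau^{-2}(u^{k+1}-2u^k+u^{k-1})$.

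**A priori estimates and first limit passage.** The key energy estimate is obtained by testing the discrete force balance with $u^{k+1}-u^k-(b^{k+1}-b^k)$ and the discrete damage law with $z^{k+1}-z^k$, summing over $k$, and using $h'\ge 0$, the coercivity of $\CC$, convexity of the indicator regularizations, and discrete integration by parts in time for the inertial term. This yields, uniformly in $\tau$ (and in $\delta$, $\sigma$), bounds for $u$ in $L^\infty(H^1)\cap W^{1,\infty}(L^2)$, for $z$ in $L^\infty(W^{1,p})\cap H^1(L^2)$, plus—on the $\sigma$-regularized level only—an $L^2(H^2)$-bound for $u$, which in turn (via the regularized force balance read as an elliptic equation) gives an $L^4$-type bound on $\e(u)$. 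With these bounds I extract weakly/weakly-$\ast$ convergent subsequences of the piecewise-constant and piecewise-affine interpolants; Aubin–Lions gives strong $L^2(L^2)$ (and uniform-in-space, via $W^{1,p}\hookrightarrow C(\ol\Omega)$) convergence of $z$. Passing $\tau\searrow 0$ recovers the $H^2$-regularized time-continuous system, including the total energy-dissipation \emph{inequality} (equality is lost because only lower semicontinuity of $\C F$ and $\C K$ survives), and, since the regularized problem retains $L^4$-control of $\e(u)$, the Moreau–Yosida limit $\delta\searrow 0$ can also be taken here to recover the genuine subgradients $\xi,\varphi$ with $\xi\in\partial I_{[0,\infty)}(z)$ in the precise form \eqref{eqn:xiDef}; this is where the approximation techniques of Lemma~\ref{lemma:approximation} and Lemma~\ref{lemma:varProp} enter to pass to the limit in the constrained variational inequality \eqref{eqn:weakDamageLaw2} and drop the constraint on the test functions.

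**Final limit $\sigma\searrow 0$ and the main obstacle.** The remaining step is to let the $H^2$-regularization parameter $\sigma\searrow 0$. The uniform-in-$\sigma$ bounds (from the energy estimate, which did not use $\sigma$) still give $u$ bounded in $L^\infty(H^1)\cap W^{1,\infty}(L^2)$ and, from the force balance, $\partial_{tt}u$ bounded in $L^2((H^1_{\GammaD})^\ast)$; $z$ bounded in $L^\infty(W^{1,p})\cap H^1(L^2)$; and $\sigma$ times the $H^2$-norm of $u$ vanishes. Weak compactness and Aubin–Lions (again yielding strong/uniform convergence of $z$) let me pass to the limit in the linear force balance \eqref{eqn:weakMomentumBalance} and, by weak lower semicontinuity, in the energy-dissipation inequality \eqref{eqn:weakEnergyInequality}. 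The genuinely hard part—exactly as flagged in the introduction—is recovering the subgradient $\xi$ in the limit, because no strong $L^2(H^1)$-convergence of $u$ is available (the inertia term blocks the elliptic-regularity argument of \cite{WIAS1520}), so one cannot directly pass to the limit in the term $W_{,z}(\e(u),z)=\tfrac12 h'(z)\CC\e(u){:}\e(u)$ appearing in \eqref{eqn:weakDamageLaw2}. I would overcome this by the cancellation trick announced in \eqref{eqn:cancelTrick}: exploit that on $\{z>0\}$ the recovered structure forces $\xi=0$, while on a neighbourhood of $\{z=0\}$ one uses $h'\ge 0$ together with $\partial_t z\le 0$ and the sign of the relevant test functions so that the non-lower-semicontinuous elastic term either cancels against $\xi$ or has a favourable sign; the remaining terms are handled by weak lower semicontinuity of $\zeta\mapsto\int h'(z)\CC\e(u){:}\e(u)\,\zeta$ for $\zeta\le 0$ and by the uniform convergence of $z$. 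Combining this with Lemma~\ref{lemma:varProp} to restore the full test-function class \eqref{eqn:weakVarIneq2a}, and verifying the initial conditions \eqref{eqn:initialBoundary} (which pass to the limit since $u$ is bounded in $C([0,T];L^2)$ and $\partial_t u$ in $C_w([0,T];L^2)$), completes the construction of a weak solution in the sense of Definition~\ref{def:weakSolution}. \ep
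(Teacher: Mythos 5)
Your overall architecture coincides with the paper's: an $H^2$-regularization of the force balance, a time-discretization solved by minimization, a priori estimates from testing with discrete time increments, Aubin--Lions compactness, Lemmas \ref{lemma:approximation} and \ref{lemma:varProp} to recover $\xi$ after the first limit, and the cancellation trick \eqref{eqn:cancelTrick} for the final limit where strong convergence of $\e(u_\delta)$ is unavailable. However, your third approximation layer --- a Moreau--Yosida regularization $(\xi_\delta,\varphi_\delta)$ of the two subgradients --- is a genuine departure that, as described, does not go through. To remove a Yosida parameter one needs bounds on the penalty terms $\xi_\delta,\varphi_\delta$ that are uniform in $\delta$ (at least in $L^1(\Omega_T)$, since the limit $\xi$ must live in $L^1$ and satisfy \eqref{eqn:weakVarIneq2a}); the standard way to obtain such bounds is the test with $(-\Delta_p z+\xi_\delta(z))_t$ sketched in the introduction, and that requires $L^4$-control of the strain \emph{rate} $\e(\partial_t u)$, not of $\e(u)$. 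The $H^2$-regularization of the force balance gives $u\in L^\infty(0,T;H^2)$, hence $\e(u)\in L^\infty(L^4)$, but only $\partial_t u\in L^\infty(L^2)$ --- no spatial derivatives of $\partial_t u$ are controlled, because there is no viscosity term. Testing with constants also fails since $\xi_\delta\le 0$ and $\varphi_\delta\ge 0$ cannot be separated. Moreover, your plan is internally inconsistent on this point: Lemmas \ref{lemma:approximation} and \ref{lemma:varProp} operate on variational \emph{inequalities} whose test functions are constrained by the discrete solution (as in \eqref{eqn:discrDamageIneq}, where $0\le\zeta\le z_\tau^-$); with Moreau--Yosida the discrete damage law is an unconstrained \emph{equation} and those lemmas have nothing to act on. The paper avoids all of this by encoding both constraints $z\ge 0$ and $\partial_t z\le 0$ directly in the discrete admissible set $\{0\le z\le z_{\tau}^{m-1}\}$, recovering $\xi$ in the explicit form \eqref{eqn:xiDef} from Lemma \ref{lemma:varProp}, and never constructing $\varphi$ at all, since the weak formulation \eqref{eqn:weakDamageLaw2} eliminates it.

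Two smaller points. First, the reason the $H^2$-regularization is needed is not the Yosida limit but the error term $\C E_\tau$ in the discrete energy inequality \eqref{eqn:discrEI}: the remainder of the Taylor expansion of $h(z_\tau^-)-h(z_\tau)$ is paired with $\CC\e(u_\tau^-):\e(u_\tau^-)$, and killing it requires exactly the uniform $L^\infty(0,T;L^4)$-bound on $\e(u_\tau)$ that the regularization provides; your proposal omits this error term entirely. Second, in the final limit $\delta\searrow 0$ the recovered subgradient is \emph{not} of the form \eqref{eqn:xiDef}: one must introduce the weak $L^2$-limits $F$ and $G$ of $\chi_{\{z_\delta>0\}}\e(u_\delta)$ and $\chi_{\{z_\delta=0\}\cap\{W_{,z}+f'\le 0\}}\e(u_\delta)$ (identified with $\e(u)$ and $0$ only on $\{z>0\}$, via the uniform convergence of $z_\delta$), and the limit $\xi$ is the $\min$-expression built from $h'(z)(\CC F{:}F+\CC G{:}G)$ supported on $\{z=0\}$. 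Your sketch of the cancellation is in the right spirit but would need this construction to be made precise before one can verify \eqref{eqn:weakVarIneq2a}.
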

	We emphasize that proving the total energy-dissipation balance \eqref{eqn:weakEnergyBalance}
	in the weak setting seems to be out of reach by the authors' best knowledge. However, by utilizing non-local or regularized versions of the Laplacian operator
	in the damage law \eqref{eqn:pde2}, the energy balance can be recovered as shown in \cite{KRZ11, KRZ13}.

\subsection{Existence of weak solutions for an $\bm{H^2}$-regularized system}
\label{section:H2reg}

	We firstly study a regularized version of the PDE system \eqref{eqn:PDE}-\eqref{eqn:PDEIBC} in order to prove Theorem \ref{theorem:mainResult}.
	The enhanced regularity allows us to control an error term which occurs in the discrete version of the energy-dissipation inequality.
	The passage to the limit system is then performed in the next subsection.

%	\textbf{Regularization}

	%In the following, we will make use of the canonical embeddings
	%$$
	%	\Hb\hookrightarrow L^2(\Omega;\R^n)\cong \bl L^2(\Omega;\R^n)\br^*\hookrightarrow \bl H_{\Gamma_\mathrm{D}}^2(\Omega;\R^n)\br^*.
	%$$
	The regularized PDE system is described in a classical notion by a quadruple $(u,z,\xi,\varphi)$ of functions satisfying
	the following equations pointwise:
	\begin{subequations}
	\label{eqn:regPDE}
	\begin{flalign}
		\qquad \partial_{tt}u&-\di(W_{,e}(\e(u),z))+\delta \di(\di(\nabla(\nabla u)))=\ell,&\\
		\qquad \partial_{t}z&-\Delta_p z+W_{,z}(\e(u),z)+f'(z)+\xi+\varphi=0,&\\
		\qquad \xi&\in\partial I_{[0,\infty)}(z),&\\
		\qquad \varphi&\in\partial I_{(-\infty,0]}(\partial_{t}z)&
	\end{flalign}
	\end{subequations}
	with the initial-boundary conditions
	\begin{subequations}
	\label{eqn:regPDEIBC}
	\begin{flalign}
		&\qquad u=b&&\hspace*{-9em}\text{ on }{\Gamma_\mathrm{D}}\times(0,T),\hspace*{10em}\\
		&\qquad \big(W_{,e}(\e(u),z)-\delta \di(\nabla(\nabla u))\big)\cdot\nu=0&&\hspace*{-9em}\text{ on }\Gamma_\mathrm{N}\times(0,T),\\
		&\qquad \delta \nabla(\nabla u)\cdot\nu=0&&\hspace*{-9em}\text{ on }\partial\Omega\times(0,T),\\
		&\qquad \nabla z\cdot\nu=0&&\hspace*{-9em}\text{ on }\partial\Omega,\\
		&\qquad u(0)=u^0,\;\partial_{t}u(0)=v^0,\;z(0)=z^0&&\hspace*{-9em}\text{ in }\Omega.
	\end{flalign}
	\end{subequations}
	In order to treat the fourth order regularization term ``$\delta \di(\di(\nabla(\nabla u)))$'' with the given constant $\delta>0$ analytically, we introduce
	the linear operator $A:H^2(\Omega;\R^n)\to (H^2(\Omega;\R^n))^*$ by
	$$
		\langle Au,v\rangle_{H^2}
		:=\int_\Omega\langle\nabla(\nabla u),\nabla(\nabla v)\rangle_{\R^{n\times n\times n}}\dx:=
			\sum_{1\leq i,j,k\leq n}\int_\Omega \partial_{x_ix_j} u_k\partial_{x_ix_j} v_k\dx.
	$$
	According to the regularization we modify the weak formulation in Definition \ref{def:weakSolution} as follows:
	\begin{definition}[Notion of weak solutions for the regularized system]
	\label{def:regWeakSolution}
		We consider the given data $(u^0,v^0,z^0,\ell,b)$ as in Lemma \ref{lemma:notions}.
		Additionally, we assume $u^0\in H^2(\Omega;\Rn)$.
		A weak solution of the regularized PDE system \eqref{eqn:regPDE}-\eqref{eqn:regPDEIBC} is a triple $(u,z,\xi)$ of functions
		\begin{align*}
			&u\in L^\infty(0,T;H^2(\Omega;\R^n))\cap W^{1,\infty}(0,T;L^2(\Omega;\R^n))\cap H^2(0,T;(H_{\Gamma_\mathrm{D}}^2(\Omega;\R^n))^*),\\
			&z\in L^\infty(0,T;W^{1,p}(\Omega))\cap H^1(0,T;L^2(\Omega)),\\
			&\xi\in L^1(\Omega\times(0,T))
		\end{align*}
		satisfying \eqref{eqn:initialBoundary}, \eqref{eqn:weakVarIneq1b}, \eqref{eqn:weakVarIneq2}, \eqref{eqn:weakDamageLaw2},
		the regularized version of the forces balance equation
		\begin{align}
		\label{eqn:weakRegMomentumBalance}
			\langle \partial_{tt} u,\zeta\rangle_{H_\GammaD^2}+\int_\Omega W_{,e}(\e(u),z):\e(\zeta)\dx+\delta \langle Au,\zeta\rangle_{H_\GammaD^2}=\int_\Omega \ell\cdot\zeta\dx
		\end{align}
		for all $\zeta\in H_{\Gamma_\mathrm{D}}^2(\Omega;\R^n)$ and a.e. $t\in(0,T)$
		and the regularized version of the total energy-dissipation inequality
		\begin{align} 
			\C F_\delta(t)+\C K(t)+\C D(0,t)\leq
				\C F_\delta(0)+\C K(0)+\C W_{ext}^\delta(0,t)
		\label{eqn:weakRegEnergyInequality}
		\end{align}
		for a.e. $t\in(0,T)$, where $\C F$, $\C K$, $\C D$ and $\C W_{ext}$ are given as in Definition \ref{lemma:notions} and
		\begin{align*}
			&\C F_\delta(t):=\C F(t)+\frac\delta2\langle A u(t),u(t)\rangle_{H^2},\\
			&\C W_{ext}^\delta(0,t):=\C W_{ext}(0,t)+\delta\int_0^t \langle Au(t),\partial_t b(t)\rangle_{H^2}\dt.
		\end{align*}
	\end{definition}
	
	The goal of this section is to prove the following proposition:
	\begin{proposition}[Existence of weak solutions for the regularized system]
	\label{proposition:regSystem}
		Let the assumptions in Section \ref{section:notation} and at the beginning of Section \ref{section:weakNotion} on the coefficients $f$, $h$ and $\CC$ be satisfied.
		Furthermore, let $\delta>0$ and the data $(u^0,v^0,z^0,\ell,b)$ according to Lemma \ref{lemma:notions} be given.
		Additionally, assume $u^0\in H^2(\Omega;\Rn)$, $\ell\in C^{0,1}(0,T;L^2(\Omega;\Rn))$ and $b\in C^{2,1}(0,T;H^2(\Omega;\Rn))$.
		Then, there exists a weak solution of system \eqref{eqn:regPDE}-\eqref{eqn:regPDEIBC}
		in the sense of Definition \ref{def:regWeakSolution}.
		
		Moreover, for this weak solution, the subgradient $\xi$ has the form
		\begin{align} 
			\xi=-\chi_{\{z=0\}}\mathrm{max}\Big\{0,W_{,z}(\varepsilon(u),z)+f'(z)\Big\}.
		\label{eqn:xiDef}
		\end{align}
	\end{proposition}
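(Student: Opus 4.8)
The plan is to combine a fully implicit time-incremental minimization scheme with the variational and approximation results of Section~\ref{section:notation}, in the spirit of \cite{WIAS1520}; the new point is that the inertia term generates discrete kinetic-energy and consistency contributions that have to be absorbed using the higher integrability of $\e(u)$ furnished by the $H^2$-regularization. First I would discretize in time: fix $\tau=T/N$, set $u_\tau^{0}:=u^0$, $u_\tau^{-1}:=u^0-\tau v^0$, $z_\tau^{0}:=z^0$, let $\ell^{k+1},b^{k+1}$ be the values of $\ell,b$ at $t=(k+1)\tau$, and for $k=0,\dots,N-1$ define $(u_\tau^{k+1},z_\tau^{k+1})$ as a minimizer of
\begin{align*}
	(u,z)\mapsto\frac{1}{2\tau^2}\big\|u-2u_\tau^{k}+u_\tau^{k-1}\big\|_{L^2}^2+\frac{1}{2\tau}\big\|z-z_\tau^{k}\big\|_{L^2}^2+\C F_\delta(u,z)-\int_\Omega\ell^{k+1}\cdot u\dx
\end{align*}
over all $(u,z)$ with $u-b^{k+1}\in H^2_{\GammaD}(\Omega;\Rn)$ and $0\le z\le z_\tau^{k}$ a.e.\ in $\Omega$. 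Since $h\ge\eta>0$, $\CC$ is coercive and $A$ controls the full second-order seminorm, the functional is coercive on $H^2(\Omega;\Rn)\times W^{1,p}(\Omega)$ and weakly lower semicontinuous while the admissible set is weakly closed; hence a minimizer exists by the direct method, and $0\le z_\tau^{k+1}\le z_\tau^{k}$, so $z_\tau\in[0,1]$ is automatically monotone in $k$. Its Euler--Lagrange conditions give a discrete force balance (variation of $u$ in directions of $H^2_{\GammaD}$) and, the constraint $0\le z\le z_\tau^{k}$ having obstacle structure and already encoding $\partial_t z\le0$, a discrete counterpart of \eqref{eqn:weakDamageLaw2} in which $\varphi$ never has to be made explicit.

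Next I would derive the a priori estimates. Testing the discrete force balance with $u_\tau^{k+1}-u_\tau^{k}-(b^{k+1}-b^{k})$ and the discrete damage inequality with $z_\tau^{k+1}-z_\tau^{k}$, summing over $k$, using the identity $2a(a-b)=|a|^2-|b|^2+|a-b|^2$ for the inertial term (the resulting $\sum_k|a_k-a_{k-1}|^2$ has a good sign and is simply dropped), convexity of $e\mapsto W(e,z)$ and of $w\mapsto\tfrac1p|w|^p$, one arrives at a discrete version
\begin{align*}
	\C F_\delta(t)+\C K(t)+\C D(0,t)\le\C F_\delta(0)+\C K(0)+\C W_{ext}^\delta(0,t)+R_\tau(t)
\end{align*}
of the energy--dissipation inequality, where $R_\tau$ collects the consistency errors of the discrete data and the convexity defects of $f$ and of $z\mapsto W(\e(u_\tau^{k+1}),z)$; the latter are of the form $\sum_k\omega\big(\|z_\tau^{k+1}-z_\tau^{k}\|_{L^\infty}\big)\|z_\tau^{k+1}-z_\tau^{k}\|_{L^2}\|\e(u_\tau^{k+1})\|_{L^4}^2$ with $\omega$ a modulus of continuity of $h'$ (and similarly for $f'$). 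Here the $H^2$-regularization is essential: it yields a uniform bound for $u_\tau$ in $L^\infty(0,T;H^2(\Omega;\Rn))$, hence for $\e(u_\tau)$ in $L^\infty(0,T;L^4(\Omega;\Rnn))$ (indeed $L^6$ for $n\le3$), while the uniform $H^1(0,T;L^2)$-bound on $z_\tau$ gives $\|z_\tau^{k+1}-z_\tau^{k}\|_{L^\infty}\to0$ uniformly in $k$ and $\sum_k\|z_\tau^{k+1}-z_\tau^{k}\|_{L^2}=O(1)$, so that $R_\tau\to0$. Altogether this provides uniform bounds for $u_\tau$ in $L^\infty(0,T;H^2)\cap W^{1,\infty}(0,T;L^2)$ and for $\partial_{tt}u_\tau$ in $L^2(0,T;(H^2_{\GammaD})^*)$ (read off the equation), for $z_\tau$ in $L^\infty(0,T;W^{1,p})\cap H^1(0,T;L^2)$, and $0\le z_\tau\le1$, $\partial_t z_\tau\le0$.

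Then I would pass to the limit $\tau\searrow0$. By Banach--Alaoglu and the Aubin--Lions--Simon lemma (using $W^{1,p}(\Omega)\compact C(\ol\Omega)$ and $H^2(\Omega)\compact H^1(\Omega)$), a subsequence satisfies $u_\tau\to u$ in $C([0,T];H^1)$, $z_\tau\to z$ in $C([0,T];C(\ol\Omega))$, $\partial_t z_\tau\weaklim\partial_t z$ in $L^2(\Omega_T)$, together with the matching weak-star limits; in particular $W_{,e}(\e(u_\tau),z_\tau)=h(z_\tau)\CC\e(u_\tau)\to h(z)\CC\e(u)$ strongly in $L^2(\Omega_T)$. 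The conditions \eqref{eqn:initialBoundary}, \eqref{eqn:weakVarIneq1b}, \eqref{eqn:weakVarIneq2b} and the regularized force balance \eqref{eqn:weakRegMomentumBalance} pass to the limit directly, and \eqref{eqn:weakRegEnergyInequality} follows from the discrete inequality above by weak lower semicontinuity of $\C F_\delta$, $\C K$ and $\C D$ (convexity of $e\mapsto\int_\Omega h(z)\,\CC e:e\dx$, of $w\mapsto\int_\Omega\tfrac1p|w|^p\dx$ and of $v\mapsto\langle Av,v\rangle_{H^2}$), continuity of the $f(z)$-term under uniform convergence, convergence of $\C W_{ext}^\delta$ and $R_\tau\to0$.

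Finally, the crux is the limit passage in the discrete damage inequality and the identification of $\xi$. A Minty-type monotonicity argument --- using that $-\Delta_p$ is of type $(S_+)$ and that the $\xi_\tau$-contribution has a favourable sign when the inequality is tested with an admissible modification of $z_\tau^{k+1}-z$ --- gives $\nabla z_\tau\to\nabla z$ strongly in $L^p(\Omega_T;\Rn)$, whence $|\nabla z_\tau|^{p-2}\nabla z_\tau\to|\nabla z|^{p-2}\nabla z$. To handle the moving obstacle in the set of test functions I would invoke Lemma~\ref{lemma:approximation} with $f_\tau$ the interpolated discrete damage and $f=z$: for $\zeta\in W_+^{1,p}(\Omega)$ with $\{\zeta=0\}\supseteq\{z(t)=0\}$ it supplies admissible discrete test functions $\nu_{\tau,t}\zeta_\tau\le z_\tau^{k+1}$ converging strongly in $L^q(0,T;W^{1,p})$; inserting these and letting $\tau\searrow0$ yields
\begin{align*}
	0\le\int_\Omega\Big((\partial_t z)\zeta+|\nabla z|^{p-2}\nabla z\cdot\nabla\zeta+\big(W_{,z}(\e(u),z)+f'(z)\big)\zeta\Big)\dx
\end{align*}
for all $\zeta\in W_-^{1,p}(\Omega)$ with $\{\zeta=0\}\supseteq\{z(t)=0\}$ and a.e.\ $t\in(0,T)$. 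Then Lemma~\ref{lemma:varProp}, applied with $f=|\nabla z|^{p-2}\nabla z$ (so $f\cdot\nabla z=|\nabla z|^p\ge0$ and $\{f=0\}\supseteq\{z=0\}$) and $g=\partial_t z+W_{,z}(\e(u),z)+f'(z)\in L^1(\Omega)$, removes the constraint at the price of $\int_{\{z=0\}}\mathrm{max}\{0,g\}\zeta\dx$; since $\partial_t z=0$ a.e.\ on $\{z=0\}$ this is exactly $-\langle\xi,\zeta\rangle_{W^{1,p}}$ with $\xi$ given by \eqref{eqn:xiDef}, so that \eqref{eqn:weakDamageLaw2} holds with this $\xi$. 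Because $\e(u)\in L^\infty(0,T;L^4(\Omega))$ we get $\xi\in L^1(\Omega_T)$, and since $\xi\le0$ with $\xi=0$ on $\{z>0\}$ we recover \eqref{eqn:weakVarIneq2a}. The two points I expect to be the main obstacles are exactly the control $R_\tau\to0$ (the reason for introducing the $H^2$-regularization) and this last step, where the absence of strong $L^2(0,T;H^1)$-convergence of $u$ has to be compensated by the obstacle structure of $\xi$ and the approximation lemmas of Section~\ref{section:notation}.
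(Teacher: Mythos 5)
Your proposal follows essentially the same route as the paper: incremental minimization with the constraint $0\le z\le z_\tau^{k}$, discrete energy estimates, an approximate energy--dissipation inequality whose error term is killed by the $L^4$-bound on $\e(u_\tau)$ coming from the $H^2$-regularization, strong $L^p$-convergence of $\nabla z_\tau$ by $p$-monotonicity, and the limit passage in the damage inequality via Lemma \ref{lemma:approximation} followed by Lemma \ref{lemma:varProp} to produce $\xi$ as in \eqref{eqn:xiDef}. The one step whose justification does not hold as written is your argument that the convexity-defect term $R_\tau$ vanishes: the uniform $H^1(0,T;L^2(\Omega))$-bound on $z_\tau$ gives only $\|z_\tau^{k+1}-z_\tau^{k}\|_{L^2}\le C\sqrt\tau$, not $\|z_\tau^{k+1}-z_\tau^{k}\|_{L^\infty}\to0$ uniformly in $k$, so you cannot conclude $\omega\big(\|z_\tau^{k+1}-z_\tau^{k}\|_{L^\infty}\big)\to0$. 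The paper instead Taylor-expands $h$, bounds the remainder quotient $r(z_\tau^--z_\tau)/(z_\tau^--z_\tau)$ in $L^\infty$ by the Lipschitz continuity of $h$, uses the pointwise a.e. convergence of $z_\tau,z_\tau^-$ to the common limit $z$ to get this quotient to $0$ a.e., and then concludes by generalized dominated convergence combined with H\"older against $\|\partial_t\widehat z_\tau\|_{L^2}\,\|\e(u_\tau^-)\|_{L^4}$; some such replacement is needed to make your Step~2 rigorous. (Minor: the piecewise constant interpolant converges strongly in $L^q(0,T;W^{1,s})$, $s<2^*$, not in $C([0,T];H^1)$, but this suffices for everything you use it for.)
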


%	\textbf{Time-discretization of the regularized system}
	We will complete the proof of Proposition \ref{proposition:regSystem} at the end of this section.
	The proof is based on a time-discretization scheme.
	To this end,
	let $\{0,\tau,2\tau,\ldots,T\}$ be an equidistant partition with fineness $\tau:=T/M>0$ of the interval $[0,T]$.
	The final time index is denoted by $M\in\N$.
%	$\tau>0$ denote the discretization fineness and let $M_\tau:=\lfloor T/\tau \rfloor$ be the number of discrete time points
%	$\{\tau,2\tau,\ldots,M_\tau\tau\}$ in the interval $[0,T]$.
	
	By a recursive minimization procedure starting from the initial values $(u_{\tau,\delta}^0,z_{\tau,\delta}^0):=(u^0,z^0)$ and $u_{\tau,\delta}^{-1}:=u^0-\tau v^0$, we obtain
	functions $(u_{\tau,\delta}^m,z_{\tau,\delta}^m)$ for every $m=0,\ldots,M$.
	To this end, we fix an $m\in\{1,\ldots, M\}$ and define the functional $\C F_{\tau,\delta}^m:H^2(\Omega;\R^n)\times W^{1,p}(\Omega)\to\R$ by
	\begin{align*}
		\C F_{\tau,\delta}^m(u,z):={}&\int_\Omega\bl\frac 1p|\nabla z|^p+W(\e(u),z)+f(z)-\ell_\tau^m\cdot u\br\dx+\frac\delta2\langle Au,u\rangle_{H^2}\\
		&+\frac\tau2\left\|\frac{z-z_{\tau,\delta}^{m-1}}{\tau}\right\|_{L^2}^2+\frac{\tau^2}{2}\left\|\frac{u-2u_{\tau,\delta}^{m-1}+u_{\tau,\delta}^{m-2}}{\tau^2}\right\|_{L^2}^2.
	\end{align*}
	A minimizer of $\C F_{\tau,\delta}^m$ in the subspace
	$$
		\C U_{\tau}^m\times\C Z_{\tau,\delta}^m:=
			\Big\{u\in \Hb\;|\;u|_{\Gamma_\mathrm{D}}=b_{\tau}^m)|_{\Gamma_\mathrm{D}}\Big\}\times \Big\{z\in \Wa\;|\;0\leq z\leq z_{\tau,\delta}^{m-1}\Big\}
	$$
	obtained by the direct method is denoted by $(u_{\tau,\delta}^m,z_{\tau,\delta}^m)$.
	The velocity field $v_{\tau,\delta}^m$ is defined as the time-discrete derivative $(u_{\tau,\delta}^m-u_{\tau,\delta}^{m-1})/\tau$
	and the discretizations $b_{\tau}^m$ and $\ell_{\tau}^m$ are set to
	\begin{align}
		b_{\tau}^m:=b(m\tau),
		%\dashint_{(m-1)\tau}^{m\tau}b(s)\ds,
		\qquad
		\ell_{\tau}^m:=\ell(m\tau).
		%\dashint_{(m-1)\tau}^{m\tau}\ell(s)\ds.
	\end{align}
	
	For a discretization $w^m\in\{u_{\tau,\delta}^m,v_{\tau,\delta}^m,z_{\tau,\delta}^m,\ell_{\tau}^m,b_{\tau}^m\}$, we introduce the piecewise constant
	interpolations $w$, $w^-$
	and the linear interpolation $\widehat w$ with respect to the time variable as
	\begin{align*}
		w(t)&:=w^m&&\text{ with }m=\left\lceil t/\tau\right\rceil,\\
		w^-(t)&:=w^{\max\{0,m-1\}}&&\text{ with }m=\left\lceil t/\tau\right\rceil,\\
		\widehat w(t)&:=\beta w^m+(1-\beta)w^{\max\{0,m-1\}}&&\text{ with }
			m=\left\lceil t/\tau\right\rceil,\;\beta=\frac{t-(m-1)\tau}{\tau}
	\end{align*}
	and the piecewise constant functions $t_\tau$ and $t_\tau^-$ as
	\begin{align*}
		t_\tau&:=\left\lceil t/\tau\right\rceil\tau=\min\{m\tau\,|\,m\in\mathbb N_0\text{ and }m\tau\geq t\},\\
		t_\tau^-&:=\max\{0,t_\tau-\tau\}.
	\end{align*}
	We would like to remark that by above definition we have $w(t)=w(t_\tau)$ for all $t\in[0,T]$.

	Within this notation, the velocity field satisfies
	\begin{align*}
		\partial_t \widehat v_\tau(t)=\frac{u_\tau^m-2u_\tau^{m-1}+u_\tau^{m-2}}{\tau^2}
	\end{align*}
	with $m=\left\lceil t/\tau\right\rceil$ and $t\in(0,T]$.
	
	Since $\delta>0$ is assumed to be constant in this section, we mostly omit the subscript $\delta$ in $u_{\tau,\delta}^m$, $v_{\tau,\delta}^m$, $z_{\tau,\delta}^m$ and
	$\C F_{\tau,\delta}^m$, $\C Z_{\tau,\delta}^m$.

	We obtain the following time-discrete (in)equalities
	by the minimizing property of the functions $(u_\tau^m,z_\tau^m)$ with respect to the functional $\C F_\tau^m$ over $\C U_\tau^m\times\C Z_\tau^m$.
	\begin{lemma}
		The functions $(u_\tau^m,z_\tau^m)\in \C U_\tau^m\times\C Z_\tau^m$ for $m=0,\ldots,M$
		and, consequently, the piecewise constant and linear interpolations
		\begin{align*}
			&u_\tau,v_\tau\in L^\infty(0,T;\Hb),\;\widehat u_\tau,\widehat v_\tau\in W^{1,\infty}(0,T;H^2(\Omega;\R^n)),\\
			&z_\tau\in L^\infty(0,T;\Wa),\;\widehat z_\tau\in W^{1,\infty}(0,T;\Wa)
		\end{align*}
		satisfy
		\begin{itemize}
			\item[(i)]
				for all $\zeta\in H_{\Gamma_\mathrm{D}}^2(\Omega;\R^n)$ and for all $t\in(0,T)$:
				\begin{align}
				\label{eqn:discrMomentumBalance}
					\int_\Omega\partial_t\widehat v_\tau\cdot\zeta\dx+\int_\Omega W_{,e}(\e(u_\tau),z_\tau):\e(\zeta)\dx+\delta \langle Au_\tau,\zeta\rangle_{H^2}
						=\int_\Omega \ell_\tau\cdot\zeta\dx,
				\end{align}
			\item[(ii)]
				for all $\zeta\in \Wa$ and all $t\in(0,T)$ with $0\leq\zeta\leq z_\tau^-(t)$ a.e. in $\Omega$:
				\begin{align}
				\label{eqn:discrDamageIneq}
					0\leq\int_\Omega\Big(|\nabla z_\tau|^{p-2}\nabla z_\tau\cdot\nabla(\zeta-z_\tau)+(W_{,z}(\e(u_\tau),z_\tau)+f'(z_\tau)
					+\partial_t\widehat z_\tau)(\zeta-z_\tau)\Big)\dx.
				\end{align}
		\end{itemize}
	\end{lemma}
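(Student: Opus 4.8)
The plan is to obtain the time-discrete force balance and damage inequality directly from the Euler--Lagrange conditions of the minimization problem defining $(u_\tau^m,z_\tau^m)$ over the affine/convex set $\C U_\tau^m\times\C Z_\tau^m$. First I would treat the momentum balance (i). Since $(u_\tau^m,z_\tau^m)$ minimizes $\C F_\tau^m$ and the $u$-constraint set $\C U_\tau^m$ is affine (with linear subspace $H^2_{\Gamma_\mathrm{D}}(\Omega;\R^n)$), the map $s\mapsto \C F_\tau^m(u_\tau^m+s\zeta,z_\tau^m)$ has a minimum at $s=0$ for every $\zeta\in H^2_{\Gamma_\mathrm{D}}(\Omega;\R^n)$. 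Differentiating at $s=0$ gives a genuine equality. The terms differentiate as follows: $\int_\Omega W(\e(u),z)\dx$ yields $\int_\Omega W_{,e}(\e(u_\tau^m),z_\tau^m):\e(\zeta)\dx$ by \eqref{eqn:defW}; the term $\frac\delta2\langle Au,u\rangle_{H^2}$ yields $\delta\langle Au_\tau^m,\zeta\rangle_{H^2}$ by bilinearity and symmetry of $A$; the linear term $-\int_\Omega\ell_\tau^m\cdot u\dx$ yields $-\int_\Omega\ell_\tau^m\cdot\zeta\dx$; the term $f(z)$ and the gradient term $\frac1p|\nabla z|^p$ do not depend on $u$; and the quadratic term $\frac{\tau^2}{2}\|(u-2u_\tau^{m-1}+u_\tau^{m-2})/\tau^2\|_{L^2}^2$ differentiates to $\int_\Omega\frac{u_\tau^m-2u_\tau^{m-1}+u_\tau^{m-2}}{\tau^2}\cdot\zeta\dx=\int_\Omega\partial_t\widehat v_\tau\cdot\zeta\dx$ (recalling the identity $\partial_t\widehat v_\tau(t)=(u_\tau^m-2u_\tau^{m-1}+u_\tau^{m-2})/\tau^2$ stated just above the lemma). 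Collecting these terms and using that the expression is constant on each interval $((m-1)\tau,m\tau]$ — so the pointwise-in-$t$ statement follows from the finitely many nodal statements — gives \eqref{eqn:discrMomentumBalance} for all $t\in(0,T)$.

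For the damage inequality (ii), the constraint set $\C Z_\tau^m=\{z\in W^{1,p}(\Omega)\mid 0\le z\le z_\tau^{m-1}\}$ is convex but not affine, so I only get a variational inequality. For any admissible $\zeta\in W^{1,p}(\Omega)$ with $0\le\zeta\le z_\tau^{m-1}$ a.e., convexity of $\C Z_\tau^m$ gives $z_\tau^m+s(\zeta-z_\tau^m)\in\C Z_\tau^m$ for $s\in[0,1]$, and minimality forces $\frac{d}{ds}\big|_{s=0^+}\C F_\tau^m(u_\tau^m,z_\tau^m+s(\zeta-z_\tau^m))\ge0$. Differentiating the $z$-dependent terms: $\frac1p|\nabla z|^p$ gives $\int_\Omega|\nabla z_\tau^m|^{p-2}\nabla z_\tau^m\cdot\nabla(\zeta-z_\tau^m)\dx$; $W(\e(u),z)$ gives $\int_\Omega W_{,z}(\e(u_\tau^m),z_\tau^m)(\zeta-z_\tau^m)\dx$; $f(z)$ gives $\int_\Omega f'(z_\tau^m)(\zeta-z_\tau^m)\dx$; and the quadratic penalty $\frac\tau2\|(z-z_\tau^{m-1})/\tau\|_{L^2}^2=\frac1{2\tau}\|z-z_\tau^{m-1}\|_{L^2}^2$ gives $\int_\Omega\frac{z_\tau^m-z_\tau^{m-1}}{\tau}(\zeta-z_\tau^m)\dx=\int_\Omega(\partial_t\widehat z_\tau)(\zeta-z_\tau^m)\dx$. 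Summing yields exactly \eqref{eqn:discrDamageIneq}, and again the pointwise-in-$t$ statement over $(0,T)$ follows because $z_\tau,u_\tau,\widehat z_\tau$ restricted to $((m-1)\tau,m\tau]$ are governed by the single index $m$ (noting $z_\tau^-(t)=z_\tau^{m-1}$ there, which is exactly the constraint bound appearing in the admissibility condition on $\zeta$).

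The only technical points requiring care — and the mild obstacle here — are the differentiability justifications: the $p$-power term $\frac1p|\nabla z|^p$ is $C^1$ on $W^{1,p}(\Omega)$ with the stated derivative (standard, using $p<\infty$); $W_{,e}$ and $W_{,z}$ are continuous and of the right growth because $W$ is quadratic in $e$ with $C^1$-coefficient $h$ and $f\in C^1([0,1])$, so the composed integral functionals are Gâteaux differentiable along the admissible directions; and one must check that the difference quotients are dominated so as to pass the limit $s\to0$ under the integral (dominated convergence, using $\zeta\in W^{1,p}\hookrightarrow L^\infty$ and the boundedness $0\le z_\tau^m\le z_\tau^{m-1}\le\cdots\le z^0\le1$). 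The regularity claims for the interpolants in the statement ($u_\tau,v_\tau\in L^\infty(0,T;H^2)$, $\widehat u_\tau,\widehat v_\tau\in W^{1,\infty}(0,T;H^2)$, etc.) are immediate since each is piecewise constant or piecewise affine in $t$ with values among the finitely many $u_\tau^m\in H^2(\Omega;\R^n)$, $z_\tau^m\in W^{1,p}(\Omega)$. This completes the proof.
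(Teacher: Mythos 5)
Your proposal is correct and follows essentially the same route as the paper: the paper's proof simply records the first-order optimality conditions $-\mathrm d_u\C F_\tau^m(u_\tau^m,z_\tau^m)=0$ and $-\mathrm d_z\C F_\tau^m(u_\tau^m,z_\tau^m)\in N_{\C Z_\tau^m}(z_\tau^m)$ and their explicit forms, which is exactly the Euler--Lagrange computation you carry out (equality in the affine direction for $u$, one-sided variational inequality for the convex constraint on $z$). Your additional remarks on differentiability of the integrands and on reducing the pointwise-in-$t$ claim to the nodal indices are details the paper leaves implicit.
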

	\begin{proof}
		The minimizer $(u_\tau^m,z_\tau^m)$ fulfills the variational property
		\begin{align*}
			-\mathrm d_u \C F_\tau^m(u_\tau^m,z_\tau^m)&=0,\\
			-\mathrm d_z \C F_\tau^m(u_\tau^m,z_\tau^m)&\in N_{\C Z_\tau^m}(z_\tau^m),
		\end{align*}
		where $N_{\C Z_\tau^m}(z_\tau^m)$ denotes the normal cone to $\C Z_\tau^m$
		at $z_\tau^m$. Equivalently,
		\begin{align*}
			\int_\Omega\frac{u_\tau^m-2u_\tau^{m-1}+u_\tau^{m-2}}{\tau^2}\cdot\zeta\dx+\int_\Omega W_{,e}(\e(u_\tau^m),z_\tau^m):\e(\zeta)\dx+\delta \langle Au_\tau^m,\zeta\rangle_{H^2}
				=\int_\Omega \ell_\tau^m\cdot\zeta\dx
		\end{align*}
		for all $\zeta\in H_\GammaD^2(\Omega;\Rn)$ and
		\begin{align*}
			0\leq\int_\Omega\Big(|\nabla z_\tau^m|^{p-2}\nabla z_\tau^m\cdot\nabla(\zeta-z_\tau^m(t))+\Big(W_{,z}(\e(u_\tau^m),z_\tau^m)+f'(z_\tau^m)
			+\frac{z_\tau^m-z_\tau^{m-1}}{\tau}\Big)(\zeta-z_\tau^m(t))\Big)\dx.
		\end{align*}
		for all $\zeta\in \C Z_\tau^m$.
		\ep
	\end{proof}

	\begin{lemma}[A priori estimates]
	\label{lemma:aprioriDiscr}
	\hspace*{0.1em}
		\begin{itemize}
			\item[(i)]
				The following a priori estimates hold uniformly in $\tau$ and $\delta$:
				\begin{subequations}
				\begin{align}
				\label{eqn:aprioriU1}
					&\sqrt{\delta}\|u_{\tau,\delta}\|_{L^\infty(0,T;H^2(\Omega;\R^n))}\leq \sqrt{\delta} C\|u^0\|_{H^2}+C,\\
				\label{eqn:aprioriU2}
					&\|u_{\tau,\delta}\|_{L^\infty(0,T;H^1(\Omega;\R^n))}\leq \sqrt{\delta}C\|u^0\|_{H^2}+C,\\
				\label{eqn:aprioriU3}
					&\|\widehat u_{\tau,\delta}\|_{L^\infty(0,T;H^1(\Omega;\R^n))\cap W^{1,\infty}(0,T;L^2(\Omega;\R^n))}\leq \sqrt{\delta}C\|u^0\|_{H^2}+C,\\
				\label{eqn:aprioriV1}
					&\|v_{\tau,\delta}\|_{L^\infty(0,T;L^2(\Omega;\R^n))}\leq \sqrt{\delta}C\|u^0\|_{H^2}+C,\\
				\label{eqn:aprioriV2}
					&\|\widehat v_{\tau,\delta}\|_{L^{\infty}(0,T;L^2(\Omega;\R^n))\cap H^1(0,T;(H_{\Gamma_\mathrm{D}}^2(\Omega;\R^n))^*)}\leq \sqrt{\delta}C\|u^0\|_{H^2}+C.
				\end{align}
				\end{subequations}
			\item[(ii)]
				For fixed $\delta>0$, the following additional a priori estimates hold uniformly in $\tau$:
				\begin{subequations}
				\begin{align}
				\label{eqn:aprioriZ1}
					&\|z_{\tau,\delta}\|_{L^\infty(0,T;\Wa)}\leq C,\\
				\label{eqn:aprioriZ2}
					&\|\widehat z_{\tau,\delta}\|_{L^\infty(0,T;\Wa)\cap H^1(0,T;L^2(\Omega))}\leq C.
				\end{align}
				\end{subequations}
%				\begin{subequations}
%				\begin{align}
%					&\|u_{\tau,\delta}\|_{L^\infty(0,T;H^2(\Omega;\R^n))}<C,\\
%					&\|\widehat u_{\tau,\delta}\|_{L^\infty(0,T;H^2(\Omega;\R^n))\cap W^{1,\infty}(0,T;L^2(\Omega;\R^n))}<C,\\
%				\end{align}
%				\end{subequations}
		\end{itemize}
	\end{lemma}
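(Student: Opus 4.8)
The plan is to derive discrete energy--dissipation estimates from the two variational (in)equalities \eqref{eqn:discrMomentumBalance} and \eqref{eqn:discrDamageIneq}. For the momentum part I would test \eqref{eqn:discrMomentumBalance} at time level $m$ with $\zeta=u_\tau^m-u_\tau^{m-1}-(b_\tau^m-b_\tau^{m-1})\in H_{\GammaD}^2(\Omega;\R^n)$. Since $\partial_t\widehat v_\tau=(v_\tau^m-v_\tau^{m-1})/\tau$, the inertial term telescopes via $\int_\Omega(v_\tau^m-v_\tau^{m-1})\cdot v_\tau^m\dx\geq\frac12\|v_\tau^m\|_{L^2}^2-\frac12\|v_\tau^{m-1}\|_{L^2}^2$; convexity of $e\mapsto W(e,z)$ together with $h'\geq0$ and $z_\tau^m\leq z_\tau^{m-1}$ gives $\int_\Omega W_{,e}(\e(u_\tau^m),z_\tau^m):\e(u_\tau^m-u_\tau^{m-1})\dx\geq\int_\Omega\big(W(\e(u_\tau^m),z_\tau^m)-W(\e(u_\tau^{m-1}),z_\tau^{m-1})\big)\dx$; and $\delta\langle Au_\tau^m,u_\tau^m-u_\tau^{m-1}\rangle_{H^2}\geq\frac\delta2\langle Au_\tau^m,u_\tau^m\rangle_{H^2}-\frac\delta2\langle Au_\tau^{m-1},u_\tau^{m-1}\rangle_{H^2}$ by symmetry and positivity of $A$. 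The increments of $b$ and $\ell$ on the right-hand side I would handle by writing $b_\tau^m-b_\tau^{m-1}=\tau\,\partial_t\widehat b_\tau$, performing one discrete summation by parts on $\sum_m\int_\Omega(v_\tau^m-v_\tau^{m-1})\cdot\partial_t\widehat b_\tau\dx$ so that the difference lands on $b$ (producing $(b_\tau^{m+1}-2b_\tau^m+b_\tau^{m-1})/\tau^2$, bounded in $L^2$ by $\|\partial_{tt}b\|_{L^\infty(0,T;L^2)}$ since $b\in C^{2,1}(0,T;H^2(\Omega;\R^n))$), and Young's inequality.

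Summing over $m=1,\dots,m'$ for arbitrary $m'\leq M$, discarding $\int_\Omega f(z_\tau^{m'})\dx\geq0$, and using the coercivity $W(e,z)\geq\frac{\eta c_0}{2}|e|^2$ combined with Korn's inequality on $H_{\GammaD}^1$ (to pass from $\|\e(u_\tau^{m'})\|_{L^2}$ to $\|u_\tau^{m'}\|_{H^1}$, up to the bounded Dirichlet datum) and the identity $\langle Au,u\rangle_{H^2}=\|\nabla(\nabla u)\|_{L^2}^2$ (to pass from the second-order seminorm to $\|u_\tau^{m'}\|_{H^2}$, up to the $H^1$-part), a discrete Gronwall argument closes the estimate; keeping the term $\frac\delta2\langle Au^0,u^0\rangle_{H^2}\leq\frac\delta2 C\|u^0\|_{H^2}^2$ separate yields exactly the $\sqrt\delta\,C\|u^0\|_{H^2}+C$ form of \eqref{eqn:aprioriU1}--\eqref{eqn:aprioriV1}. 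Estimate \eqref{eqn:aprioriV2} for $\partial_t\widehat v_\tau=(v_\tau^m-v_\tau^{m-1})/\tau$ is then read off from \eqref{eqn:discrMomentumBalance} directly: its $(H_{\GammaD}^2)^*$-norm is controlled by $\|\ell_\tau\|_{L^2}+C\|\e(u_\tau)\|_{L^2}+\delta\|u_\tau\|_{H^2}$, all already bounded by the previous step. The claimed interpolant norms follow because $\|w_\tau\|_{L^\infty(0,T;X)}=\max_m\|w_\tau^m\|_X$ and $\partial_t\widehat u_\tau=v_\tau$.

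For part (ii) I would test \eqref{eqn:discrDamageIneq} with $\zeta=z_\tau^{m-1}$, which is admissible since $0\leq z_\tau^{m-1}\leq z_\tau^-(t)$ on the $m$-th subinterval. Using $\partial_t\widehat z_\tau\,(z_\tau^{m-1}-z_\tau^m)=-\frac1\tau|z_\tau^m-z_\tau^{m-1}|^2$ and $|\nabla z_\tau^m|^{p-2}\nabla z_\tau^m\cdot(\nabla z_\tau^{m-1}-\nabla z_\tau^m)\leq\frac1p|\nabla z_\tau^{m-1}|^p-\frac1p|\nabla z_\tau^m|^p$ (convexity of $r\mapsto\frac1p|r|^p$) one obtains $\frac1\tau\|z_\tau^m-z_\tau^{m-1}\|_{L^2}^2+\frac1p\|\nabla z_\tau^m\|_{L^p}^p\leq\frac1p\|\nabla z_\tau^{m-1}\|_{L^p}^p+\int_\Omega\big(W_{,z}(\e(u_\tau^m),z_\tau^m)+f'(z_\tau^m)\big)(z_\tau^{m-1}-z_\tau^m)\dx$. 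The point forcing $\delta$ to be fixed here is that $W_{,z}(e,z)=\frac12 h'(z)\CC e:e$, so $\|W_{,z}(\e(u_\tau^m),z_\tau^m)\|_{L^2}\leq C\|\e(u_\tau^m)\|_{L^4}^2\leq C(\delta)$ by the ($\delta$-dependent) $H^2$-bound of part (i) and the embedding $H^1(\Omega)\hookrightarrow L^4(\Omega)$, while $\|f'(z_\tau^m)\|_{L^2}\leq C$ because $0\leq z_\tau^m\leq1$. Hence the coupling term is $\leq\frac1{2\tau}\|z_\tau^m-z_\tau^{m-1}\|_{L^2}^2+\frac\tau2 C(\delta)$; the first half is absorbed on the left, and summation in $m$ gives $\frac12\sum_m\frac1\tau\|z_\tau^m-z_\tau^{m-1}\|_{L^2}^2+\frac1p\|\nabla z_\tau^{m'}\|_{L^p}^p\leq\frac1p\|\nabla z^0\|_{L^p}^p+\frac T2C(\delta)$. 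Since moreover $0\leq z_\tau^m\leq z^0\leq1$ (from the constraint in $\C Z_\tau^m$), which gives the $L^\infty(0,T;L^p)$-part, and $\|\partial_t\widehat z_\tau\|_{L^2(0,T;L^2)}^2=\sum_m\frac1\tau\|z_\tau^m-z_\tau^{m-1}\|_{L^2}^2$, estimates \eqref{eqn:aprioriZ1}--\eqref{eqn:aprioriZ2} follow.

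The main obstacle is bookkeeping the $\delta$-dependences correctly: obtaining \eqref{eqn:aprioriU1} with the multiplicative $\sqrt\delta$ in front of $\|u^0\|_{H^2}$, and, in part (ii), being able to absorb the $W_{,z}$-coupling term, which is exactly why one must invoke the $\delta$-dependent $H^2(\Omega)\hookrightarrow L^4(\Omega)$ estimate for $\e(u_\tau)$ rather than merely the $\tau$- and $\delta$-uniform $H^1$-bound. The remainder --- the $b$- and $\ell$-increment terms and the final discrete Gronwall inequality --- is routine.
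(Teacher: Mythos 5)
Your proposal follows the paper's own proof essentially step for step: the same test function $u_\tau-u_\tau^--(b_\tau-b_\tau^-)$ with the same telescoping/convexity estimates, discrete summation by parts on the $b$-increment, Young plus discrete Gronwall plus Korn for (i), a comparison argument in the discrete momentum balance for \eqref{eqn:aprioriV2}, and the test $\zeta=z_\tau^-$ with $p$-convexity and the $\delta$-dependent $L^4$-bound on $\e(u_\tau)$ (via $H^2\hookrightarrow W^{1,4}$) for (ii). The only cosmetic slip is the remark about discarding $\int_\Omega f(z_\tau)\dx\geq 0$, which does not actually appear in the momentum test; otherwise the argument is correct.
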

	\begin{remark}
	\label{remark:reg}
		The a priori estimates in (ii) are due to the $H^2$-regularization for $u$
		(the a priori bound \eqref{eqn:aprioriU1} to be more precise).
		Later on, it will also enable us to establish the total energy-dissipation inequality
		in the limit regime $\tau\searrow 0$.
		Then, the total energy-dissipation inequality will give a priori estimates of type (ii)
		uniformly in $\delta$.
	\end{remark}
	\textbf{Proof of Lemma \ref{lemma:aprioriDiscr}}
	
		\underline{To (i): Proof of the a priori estimates \eqref{eqn:aprioriU1}-\eqref{eqn:aprioriV2}:}
		
				Testing \eqref{eqn:discrMomentumBalance} with $u_\tau-u_\tau^--(b_\tau-b_\tau^-)$ and using the estimate
				\begin{align*}
					\int_\Omega\partial_t\widehat v_\tau\cdot(u_\tau-u_\tau^-)\dx&\geq \frac12\left\|v_\tau\right\|_{L^2}^2-\frac12\left\|v_\tau^-\right\|_{L^2}^2
				\end{align*}
				as well as the convexity estimates (note that $z_\tau^-\geq z_\tau$)
				\begin{align}
				\label{eqn:West}
					\int_\Omega W_{,e}(\e(u_\tau),z_\tau):\e(u_\tau-u_\tau^-)\dx
						&\geq\int_\Omega\bl W(\e(u_\tau),z_\tau)-W(\e(u_\tau^-),z_\tau^-)\br\dx\\
					\delta\langle Au_\tau,u_\tau-u_\tau^-\rangle_{H^2}
						&\geq \frac\delta2 \langle A u_\tau,u_\tau\rangle_{H^2}-\frac\delta2 \langle A u_\tau^-,u_\tau^-\rangle_{H^2},
				\end{align}
				yield
				\begin{align}
					&\frac12\left\|v_\tau(t)\right\|_{L^2}^2-\frac12\left\|v_\tau^-(t)\right\|_{L^2}^2
						+\frac\delta2 \langle A u_\tau(t),u_\tau(t)\rangle_{H^2}-\frac\delta2 \langle A u_\tau^-(t),u_\tau^-(t)\rangle_{H^2}\notag\\
					&+\int_\Omega\bl W(\e(u_\tau(t)),z_\tau(t))-W(\e(u_\tau^-(t)),z_\tau^-(t))\br\dx
						-\int_\Omega\partial_t \widehat v_\tau(t)\cdot\bl b_\tau(t)-b_\tau^-(t)\br\dx\notag\\
					&\qquad\leq \int_\Omega \ell_\tau(t)\cdot\bl u_\tau(t)-u_\tau^-(t)-(b_\tau(t)-b_\tau^-(t))\br\dx\notag\\
				\label{eqn:testEq1}
					&\qquad\quad+\int_\Omega W_{,e}(\e(u_\tau(t)),z_\tau(t)):\e(b_\tau(t)-b_\tau^-(t))\dx
						+\delta\langle A u_\tau(t),b_\tau(t)-b_\tau^-(t)\rangle_{H^2}.
	%				&\qquad\leq \tau\|f_\tau(t)\|_{L^2}\|v_\tau(t)\|_{L^2}.\notag
				\end{align}
				The right-hand side can be estimated by Young's inequality as follows ($\eta>0$ denotes a freely chosen constant)
				\begin{align}
					\text{r.h.s.}
%					\leq{}&
%						\tau\|\ell_\tau(t)\|_{L^2}\bl\|v_\tau(t)\|_{L^2}+\|\partial_t\widehat b_\tau(t)\|_{L^2}\br
%						+C\tau\|\e(u_\tau(t))\|_{L^2}\|\e(\partial_t \widehat b_\tau(t))\|_{L^2}\notag\\
%					&+\delta\tau\langle A u_\tau(t),u_\tau(t)\rangle_{H^2}^{1/2}\langle A \partial_t \widehat b_\tau(t),\partial_t \widehat b_\tau(t)\rangle_{H^2}^{1/2}\notag\\
					\leq{}&
						C\tau\Big(\|\ell_\tau(t)\|_{L^2}^2+\|v_\tau(t)\|_{L^2}^2+\|\partial_t \widehat b_\tau(t)\|_{L^2}^2
						+\eta\|\e(u_\tau(t))\|_{L^2}^2+C_\eta\|\e(\partial_t \widehat b_\tau(t))\|_{L^2}^2\notag\\
				\label{eqn:rhsEst}
					&+\delta\langle A u_\tau(t),u_\tau(t)\rangle_{H^2}
						+\delta\langle A \partial_t \widehat b_\tau(t),\partial_t \widehat b_\tau(t)\rangle_{H^2}\Big).
				\end{align}
				Summing \eqref{eqn:testEq1} over the discrete time points $\tau,2\tau,\ldots,t_\tau$,
				using the estimate \eqref{eqn:rhsEst}, an $L^2(L^2)$-a priori bound for $\ell_\tau$
				and an $H^1(H^1)$-a priori bound for $\widehat b_\tau$,
				we obtain for small $\eta>0$
				\begin{align}
					&\frac12\left\|v_\tau(t)\right\|_{L^2}^2+\frac\delta2 \langle A u_\tau(t),u_\tau(t)\rangle_{H^2}+c\|\e(u_\tau(t))\|_{L^2}^2
						-\int_0^{t_\tau}\int_{\Omega}\partial_t \widehat v_\tau\cdot\partial_t \widehat b_\tau\dxs\notag\\
					&\qquad\leq \frac12\left\|v^0\right\|_{L^2}^2
						+\frac\delta2 \langle A u^0,u^0\rangle_{H^2}
						+\int_\Omega W(\e(u^0),z^0)\dx\notag\\
					&\qquad\quad+C\int_0^{t_\tau}\bl1+\|v_\tau(s)\|_{L^2}^2+\eta\|\e(u_\tau(t))\|_{L^2}^2+\delta\langle A u_\tau(s),u_\tau(s)\rangle_{H^2}+C_\eta\br\ds
				\label{eqn:aprioriTemp}
				\end{align}
				for all $t\in[0,T]$.
				The discrete integration by parts formula yields for all $t\in[0,T]$
				\begin{align}
					\int_0^{t_\tau}\int_{\Omega}\partial_t \widehat v_\tau\cdot\partial_t \widehat b_\tau\dxs
						={}&\int_\Omega v_\tau(t)\cdot\partial_t \widehat b_\tau(t)\dx
						-\int_\Omega v^0\cdot\partial_t\widehat b_\tau(0)\dx\notag\\
					&-\int_0^{t_\tau}\int_\Omega v_\tau^-(s)\cdot\frac{\partial_t\widehat b_\tau(s)-\partial_t\widehat b_\tau(s-\tau)}{\tau}\dxs.
				\label{eqn:discrIntegrByParts}
				\end{align}
				Applying \eqref{eqn:discrIntegrByParts} to \eqref{eqn:aprioriTemp} and using an $L^2(L^2)$- a priori bound for
				the second discrete time-derivative of $\widehat b_\tau$, we eventually obtain for all $t\in[0,T]$
				\begin{align*}
				\begin{split}
					&\frac12\left\|v_\tau(t)\right\|_{L^2}^2+\frac\delta2 \langle A u_\tau(t),u_\tau(t)\rangle_{H^2}+c\|\e(u_\tau(t))\|_{L^2}^2\\
					&\quad\leq
						C\Big(1+\delta\|u^0\|_{H^2}^2+\int_0^{t_\tau}\bl\|v_\tau(s)\|_{L^2}^2+\eta\|\e(u_\tau(s))\|_{L^2}^2+\delta\langle A u_\tau(s),u_\tau(s)\rangle_{H^2}\br\ds\Big)\\
					&\quad\quad-\int_0^{t_\tau}\int_\Omega v_\tau^-(s)\cdot\frac{\partial_t\widehat b_\tau(s)-\partial_t\widehat b_\tau(s-\tau)}{\tau}\dxs\\
					&\quad\leq
						C\Big(1+\delta\|u^0\|_{H^2}^2+\int_0^{t_\tau}\bl\|v_\tau(s)\|_{L^2}^2+\|v_\tau^-(s)\|_{L^2}^2+\eta\|\e(u_\tau(s))\|_{L^2}^2
						+\delta\langle A u_\tau(s),u_\tau(s)\rangle_{H^2}\br\ds\Big).
				\end{split}
				\end{align*}
				We conclude by a discrete version of Gronwall's inequality
%				(cf. \cite[page 26]{Rou13})
%				boundedness of the left-hand side uniformly in $\tau$ and $\delta$.
%				Thus we obtain
				\eqref{eqn:aprioriU1}-\eqref{eqn:aprioriV1}
				and
				$$
					\|\widehat v_{\tau,\delta}\|_{L^{\infty}(0,T;L^2(\Omega;\R^n))}<\sqrt{\delta}C\|u^0\|_{H^2}+C
				$$
				with the help of Korn's inequality.
				By using these a priori estimates, a comparison argument in \eqref{eqn:discrMomentumBalance} shows
				\begin{align*}
					\|\partial_t\widehat v_\tau\|_{L^2(0,T;(H_{\Gamma_\mathrm{D}}^2(\Omega;\R^n))^*)}^2
					={}&\int_0^T\Big|\sup_{\|\zeta\|_{H_{\GammaD}^2}=1}\langle\partial_t\widehat v_\tau(t),\zeta\rangle\Big|^2\dt\\
					\leq{}&\delta\|u_\tau(t)\|_{L^2(0,T;H^2(\Omega;\Rn))}^2+\int_0^T\int_\Omega\Big(|W_{,e}(\e(u_\tau),z_\tau)|^2
							+|\ell_\tau|^2\Big)\dxt\\
					\leq{}&\delta C\|u^0\|_{H^2}^2+C.
				\end{align*}
				Thus \eqref{eqn:aprioriV2} is proven.
		
		\underline{To (ii): Proof of the a priori estimates \eqref{eqn:aprioriZ1} and \eqref{eqn:aprioriZ2}:}
		
				Testing \eqref{eqn:discrDamageIneq} with $z_\tau$
				which is possible due to $0\leq z_\tau\leq z_\tau^-$ a.e. in $\Omega\times(0,T)$
				and using the $|\cdot|^p$-convexity estimate
				$$
					\int_\Omega |\nabla z_\tau|^{p-2}\nabla z_\tau\cdot\nabla(z_\tau-z_\tau^-)\dx
						\geq\frac 1p\|\nabla z_\tau\|_{L^p}^p-\frac1p\|\nabla z_\tau^-\|_{L^p}^p
				$$
				yield
				\begin{align}
					&\frac 1p\|\nabla z_\tau(t)\|_{L^p}^p-\frac1p\|\nabla z_\tau^-(t)\|_{L^p}^p
						+\tau\left\|\partial_t\widehat z_\tau(t)\right\|_{L^2}^2\notag\\
				\label{eqn:testEq2}
					&\qquad\leq
						\int_\Omega\bl W_{,z}(\e(u_\tau(t)),z_\tau(t))+f'(z_\tau(t))\br(z_\tau^-(t)-z_\tau(t))\dx.
				\end{align}
				Thus, by Young's inequality ($\eta>0$ denotes a freely chosen constant)
				\begin{align*}
					&\frac 1p\|\nabla z_\tau(t)\|_{L^p}^p-\frac1p\|\nabla z_\tau^-(t)\|_{L^p}^p
						+\tau\left\|\partial_t\widehat z_\tau(t)\right\|_{L^2}^2\\
					&\qquad\leq\tau\eta\left\|\partial_t\widehat z_\tau(t)\right\|_{L^2}^2+\tau C_\eta\|W_{,z}(\e(u_\tau(t)),z_\tau(t))+f'(z_\tau(t))\|_{L^2}^2.
				\end{align*}
				Summing over the discrete time points $\tau,2\tau,\ldots, t_\tau$ and using the constraint $0\leq z_\tau\leq 1$
				a.e. in $\Omega\times(0,T)$
				as well as the a priori bound $\|u_\tau\|_{L^\infty(0,T;W^{1,4}(\Omega;\R^n))}<C$ (which follows from \eqref{eqn:aprioriU1} for fixed $\delta>0$
				and the continuous embedding $H^2(\Omega;\Rn)\hookrightarrow W^{1,4}(\Omega;\Rn)$), we end up with
				\eqref{eqn:aprioriZ1} and \eqref{eqn:aprioriZ2}.
		\ep
	
%	In conclusion, we obtain the following a priori estimates.
%	\begin{corollary}
%		There exists a $C>0$ such that for all $\tau>0$
%		\begin{align}
%			\label{eqn:apriori1}
%				\|v_\tau\|_{L^\infty(0,T;L^2(\Omega;\R^n))}&<C,\\
%			\label{eqn:apriori2}
%				\|u_\tau\|_{L^\infty(0,T;H^2(\Omega;\R^n))}&<C,\\
%			\label{eqn:apriori3}
%				\|\widehat u_\tau\|_{L^\infty(0,T;H^2(\Omega;\R^n))\cap W^{1,\infty}(0,T;L^2(\Omega;\R^n))}&<C,\\
%			\label{eqn:apriori4}
%				\|\widehat v_\tau\|_{L^{\infty}(0,T;L^2(\Omega;\R^n))\cap H^1(0,T;(H_{\Gamma_\mathrm{D}}^2(\Omega;\R^n))^*)}&<C,\\
%			\label{eqn:apriori5}
%				\|z_\tau\|_{L^\infty(0,T;\Wa)}&<C,\\
%			\label{eqn:apriori6}
%				\|\widehat z_\tau\|_{L^\infty(0,T;\Wa)\cap H^1(0,T;L^2(\Omega))}&<C.
%		\end{align}
%	\end{corollary}

	The a priori estimates in Lemma \ref{lemma:aprioriDiscr} give rise to the following convergence properties
	which follows from standard weak and Aubin-Lions type compactness results \cite{Simon}.
	\begin{lemma}
	\label{lemma:discrConvergence}
		There exist functions
		\begin{align*}
			&u\in L^\infty(0,T;H^2(\Omega;\R^n))\cap W^{1,\infty}(0,T;L^2(\Omega;\R^n))\cap H^2(0,T;(H_{\Gamma_\mathrm{D}}^2(\Omega;\R^n))^*),\\
			&z\in L^\infty(0,T;\Wa)\cap H^1(0,T;L^2(\Omega))
		\end{align*}
		satisfying \eqref{eqn:initialBoundary}, \eqref{eqn:weakVarIneq1b} and \eqref{eqn:weakVarIneq2b}
		and a subsequence $\tau_k\searrow 0$ as $k\nearrow\infty$ such that
		\begin{subequations}
		\begin{align}
		\label{eqn:convU1}
			u_{\tau_k},u_{\tau_k}^-&\to u&&\text{ weakly-star in }L^\infty(0,T;H^2(\Omega;\R^n)),\\
		\label{eqn:convU2}
				&&&\text{ strongly in }L^q(0,T;W^{1,s}(\Omega;\R^n))\text{ for every }q\in[1,\infty),\;1\leq s<2^*,\\
		\label{eqn:convU3}
			\widehat u_{\tau_k}&\to u&&\text{ weakly-star in }L^\infty(0,T;H^2(\Omega;\R^n))\cap W^{1,\infty}(0,T;L^2(\Omega;\Rn)),\\
		\label{eqn:convU4}
				&&&\text{ strongly in }L^q(0,T;W^{1,s}(\Omega;\R^n))\text{ for every }q\in[1,\infty),\;1\leq s<2^*,\\
		\label{eqn:convV1}
			v_{\tau_k},v_{\tau_k}^-&\to \partial_{t}u&&\text{ weakly-star in }L^\infty(0,T;L^2(\Omega;\R^n)),\\
		\label{eqn:convV2}
			\widehat v_{\tau_k}&\to \partial_{t}u&&\text{ weakly-star in }L^\infty(0,T;L^2(\Omega;\R^n))\text{ and weakly in }H^{1}(0,T;(H_\GammaD^2(\Omega;\Rn))^*),\\
		\label{eqn:convZ1}
			z_{\tau_k},z_{\tau_k}^-&\to z&&\text{ weakly-star in }L^\infty(0,T;\Wa),\\
		\label{eqn:convZ2}
				&&&\text{ strongly in }L^q(0,T;L^\infty(\Omega))\text{ for every }q\in[1,\infty),\\
		\label{eqn:convZ3}
			\widehat z_{\tau_k}&\to z&&\text{ weakly-star in }L^\infty(0,T;\Wa)\text{ and weakly in }H^1(0,T;L^2(\Omega)),\\
		\label{eqn:convZ4}
				&&&\text{ strongly in } C(\ol{\Omega}\times[0,T])
		\end{align}
		\end{subequations}
		as $k\nearrow\infty$ for fixed $\delta>0$.
		The constant $2^*$ denotes the critical Sobolev exponent.
		Moreover, we obtain the following convergence properties of the data
		\begin{subequations}
		\begin{align}
		\label{eqn:convEll}
			\hspace*{2em}\ell_\tau&\to\ell
				&&\text{strongly in }L^2(0,T;L^2(\Omega;\Rn)),\hspace*{10.9em}\\
		\label{eqn:convB1}
			\widehat b_{\tau_k}&\to b
				&&\text{strongly in }H^1(0,T;H^2(\Omega;\Rn)),\\
		\label{eqn:convB2}
			\frac{\partial_t\widehat b_{\tau_k}-\partial_t\widehat b_{\tau_k}(\cdot-\tau_k)}{\tau_k} &\to \partial_{tt}b
				&&\text{strongly in }L^2(0,T;H^2(\Omega;\Rn)).
		\end{align}
		\end{subequations}
	\end{lemma}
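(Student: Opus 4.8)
The strategy is a routine compactness argument that converts the uniform bounds of Lemma~\ref{lemma:aprioriDiscr} (uniform in $\tau$, and for the $z$-estimates at fixed $\delta$) into convergent subsequences. First I would apply sequential weak-$\star$ compactness of bounded sets in duals of separable Banach spaces: from \eqref{eqn:aprioriU1}--\eqref{eqn:aprioriU3} one extracts (along one common subsequence $\tau_k\searrow0$) $u_{\tau_k}\weakstarlim u$ in $L^\infty(0,T;H^2)$ and $\widehat u_{\tau_k}\weakstarlim u$ in $L^\infty(0,T;H^2)\cap W^{1,\infty}(0,T;L^2)$; from \eqref{eqn:aprioriV1}--\eqref{eqn:aprioriV2}, $v_{\tau_k}$ and $\widehat v_{\tau_k}$ weak-$\star$ convergent in $L^\infty(0,T;L^2)$ with $\partial_t\widehat v_{\tau_k}$ weakly convergent in $L^2(0,T;(H^2_{\GammaD})^*)$; from \eqref{eqn:aprioriZ1}--\eqref{eqn:aprioriZ2}, $z_{\tau_k},\widehat z_{\tau_k}\weakstarlim z$ in $L^\infty(0,T;W^{1,p})$ with $\partial_t\widehat z_{\tau_k}$ weakly convergent in $L^2(0,T;L^2)$. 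The piecewise-constant and backward-shifted interpolants differ from the associated piecewise-linear ones by quantities that vanish, e.g. $\|u_\tau-\widehat u_\tau\|_{L^\infty(0,T;L^2)}\le\tau\|\partial_t\widehat u_\tau\|_{L^\infty(0,T;L^2)}$ and, computing the $L^2$-norm of the sawtooth, $\|z_\tau-\widehat z_\tau\|_{L^2(0,T;L^2)}\le C\tau\|\partial_t\widehat z_\tau\|_{L^2(0,T;L^2)}$; hence all interpolants of a given quantity share one limit. This identifies $v_{\tau_k},v_{\tau_k}^-,\widehat v_{\tau_k}\weakstarlim\partial_t u$ (so $u\in W^{1,\infty}(0,T;L^2)$, and the $\partial_t\widehat v_{\tau_k}$-bound gives $u\in H^2(0,T;(H^2_{\GammaD})^*)$), and likewise $\partial_t\widehat z_{\tau_k}\weaklim\partial_t z$.

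Next I would upgrade to the strong convergences via the Aubin--Lions--Simon lemma \cite{Simon}. For $u$: $\widehat u_{\tau_k}$ is bounded in $L^\infty(0,T;H^2)$ with derivative bounded in $L^\infty(0,T;L^2)$, and $H^2(\Omega;\R^n)\hookrightarrow\hookrightarrow W^{1,s}(\Omega;\R^n)\hookrightarrow L^2(\Omega;\R^n)$ compactly for $1\le s<2^*$, so $\widehat u_{\tau_k}\to u$ in $C([0,T];W^{1,s})$, hence in every $L^q(0,T;W^{1,s})$ with $q<\infty$; since $u_{\tau_k}-\widehat u_{\tau_k}\to0$ in $L^\infty(0,T;L^2)$ while $u_{\tau_k}$ is bounded in $L^\infty(0,T;H^2)$, the interpolation $\|w\|_{W^{1,s}}\le C\|w\|_{H^2}^{\theta}\|w\|_{L^2}^{1-\theta}$ forces $u_{\tau_k},u_{\tau_k}^-\to u$ in the same spaces. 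For $z$: $\widehat z_{\tau_k}$ is bounded in $L^\infty(0,T;W^{1,p})$ with derivative bounded in $L^2(0,T;L^2)$, and since $p>n$ the embedding $W^{1,p}(\Omega)\hookrightarrow\hookrightarrow C(\ol\Omega)\hookrightarrow L^2(\Omega)$ is compact, so $\widehat z_{\tau_k}\to z$ in $C(\ol\Omega\times[0,T])$; combining $\|z_\tau^m-z_\tau^{m-1}\|_{L^2}\le C\sqrt\tau$ (from $\sum_m\tau\|\partial_t\widehat z_\tau\|_{L^2}^2\le C$) with the Gagliardo--Nirenberg estimate $\|w\|_{L^\infty}\le C\|w\|_{W^{1,p}}^{\theta}\|w\|_{L^2}^{1-\theta}$ gives $\|z_\tau-\widehat z_\tau\|_{L^\infty(0,T;L^\infty)}\to0$, whence $z_{\tau_k},z_{\tau_k}^-\to z$ in $L^q(0,T;L^\infty(\Omega))$, $q<\infty$.

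Finally, the data convergences \eqref{eqn:convEll}--\eqref{eqn:convB2} follow from the assumed regularity $\ell\in C^{0,1}(0,T;L^2)$ and $b\in C^{2,1}(0,T;H^2)$ by elementary Taylor estimates for the interpolated difference quotients; the initial conditions in \eqref{eqn:initialBoundary} follow from $\widehat u_{\tau_k}(0)=u^0$, $\widehat v_{\tau_k}(0)=v^0$ (here one uses $u_\tau^{-1}=u^0-\tau v^0$), $\widehat z_{\tau_k}(0)=z^0$ together with the (weak, resp. strong) temporal continuity of the limits; the Dirichlet condition passes to the limit by continuity of the trace operator along the convergences above; and the sign constraints \eqref{eqn:weakVarIneq1b}, \eqref{eqn:weakVarIneq2b} hold because $z_\tau^m\le z_\tau^{m-1}$ makes $\widehat z_\tau$ non-increasing, so $\partial_t\widehat z_\tau\le0$ and thus $\partial_t z\le0$ by weak-$L^2$ closure, while $z_\tau^m\ge0$ passes to the limit. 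The only points requiring genuine care are the passage between the three interpolants (showing their differences vanish in precisely the topologies needed) and verifying that the Aubin--Lions--Simon hypotheses for $\widehat z_\tau$ are met with merely an $L^2(0,T;L^2)$-bound on $\partial_t\widehat z_\tau$; the remainder is bookkeeping.
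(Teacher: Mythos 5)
Your proposal is correct and follows essentially the same route as the paper: weak/weak-$\star$ compactness from the a priori estimates of Lemma \ref{lemma:aprioriDiscr}, identification of the limits of the various interpolants via their $O(\tau)$-differences, Aubin--Lions--Simon for the strong convergences, and Taylor/Lipschitz estimates for the data. The extra details you supply (the interpolation inequalities transferring strong convergence from $\widehat u_\tau$ to $u_\tau$, $u_\tau^-$ and the Gagliardo--Nirenberg step for $z_\tau-\widehat z_\tau$) are consistent with, and merely flesh out, what the paper subsumes under ``compactness arguments''.
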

	\begin{proof}
		\underline{To \eqref{eqn:convEll}-\eqref{eqn:convB2}:}
		
		We set $X:=H^2(\Omega;\Rn)$.
		By exploiting the fundamental theorem of calculus for functions with values in $X$ and the assumed Lipschitz continuity of $\partial_{tt}b$ in time,
		a straightforward calculation shows
		\begin{align*}
			\int_0^T\left\|\frac{\partial_t\widehat b_{\tau_k}(t)-\partial_t\widehat b_{\tau_k}(t-\tau_k)}{\tau_k}-\partial_{tt}b(t)\right\|_{X}^2\dt
%			={}&\frac{1}{\tau^4}\int_0^T\left\|\int_{t_{\tau_k}^-}^{t_{\tau_k}}\big(b_{t}(s)-b_{t}(s-\tau_k)-b_{tt}(t)\big)\ds\right\|_{X}^2\dt\\
			={}&\frac{1}{\tau_k^4}\int_0^T\left\|\int_{t_{\tau_k}^-}^{t_{\tau_k}}\int_{s-\tau_k}^s\big(\partial_{tt}b(\iota)-\partial_{tt}b(t)\big)\diota\ds\right\|_{X}^2\dt\\
			\leq{}&\frac{1}{\tau_k^4}\int_0^T\left(\int_{t_{\tau_k}^-}^{t_{\tau_k}}\int_{s-\tau_k}^s\big\|\partial_{tt}b(\iota)-\partial_{tt}b(t)\big\|_{X}\diota\ds\right)^2\dt\\
			\leq{}&\frac{1}{\tau_k^4}\int_0^T\bigg(\int_{t_{\tau_k}^-}^{t_{\tau_k}}\int_{s-\tau_k}^sC\underbrace{|\iota-t|}_{\leq 2\tau_k}\diota\ds\bigg)^2\dt\\
			={}&4C^2\tau_k^2T\to 0
		\end{align*}
		as $k\nearrow\infty$. Thus \eqref{eqn:convB2} is shown. The properties \eqref{eqn:convEll} and \eqref{eqn:convB1} follow by similar reasoning.
		
		\underline{To \eqref{eqn:convU1}-\eqref{eqn:convZ4}:}
		
		Standard weak and weak-star compactness results applied to the a priori estimates in Lemma \ref{lemma:aprioriDiscr}
		reveal existence of functions
		\begin{align*}
			&u,u^-\in L^\infty(0,T;H^2(\Omega;\R^n)),\\
			&\widehat u\in L^\infty(0,T;H^2(\Omega;\R^n))\cap W^{1,\infty}(0,T;L^2(\Omega;\R^n)),\\
			&v,v^-\in L^{\infty}(0,T;L^2(\Omega;\R^n)),\\
			&\widehat v\in L^{\infty}(0,T;L^2(\Omega;\R^n))\cap H^{1}(0,T;(H_{\Gamma_\mathrm{D}}^2(\Omega;\R^n))^*),\hspace*{2.8em}\\
			&z,z^-\in L^\infty(0,T;\Wa),\\
			&\widehat z\in L^\infty(0,T;\Wa)\cap H^1(0,T;L^2(\Omega))
		\end{align*}
		satisfying \eqref{eqn:initialBoundary}, \eqref{eqn:weakVarIneq1b} and \eqref{eqn:weakVarIneq2b}
		and subsequences indexed by $\tau_k$ such that
		\begin{align*}
			&u_{\tau_k}\to u&&\text{ weakly-star in }L^\infty(0,T;H^2(\Omega;\R^n)),\\
			&u_{\tau_k}^-\to u^-&&\text{ weakly-star in }L^\infty(0,T;H^2(\Omega;\R^n)),\\
			&\widehat u_{\tau_k}\to \widehat u&&\text{ weakly-star in }L^\infty(0,T;H^2(\Omega;\R^n))\cap W^{1,\infty}(0,T;L^2(\Omega;\R^n)),\\
			&v_{\tau_k}\to v&&\text{ weakly-star in }L^{\infty}(0,T;L^2(\Omega;\R^n)),\\
			&v_{\tau_k}^-\to v^-&&\text{ weakly-star in }L^{\infty}(0,T;L^2(\Omega;\R^n)),\\
			&\widehat v_{\tau_k}\to \widehat v&&\text{ weakly-star in }L^{\infty}(0,T;L^2(\Omega;\R^n))\text{ and weakly in }H^1(0,T;(H_{\Gamma_\mathrm{D}}^2(\Omega;\R^n))^*),\\
			&z_{\tau_k}\to z&&\text{ weakly-star in }L^\infty(0,T;\Wa),\\
			&z_{\tau_k}^-\to z^-&&\text{ weakly-star in }L^\infty(0,T;\Wa),\\
			&\widehat z_{\tau_k}\to\widehat z&&\text{ weakly-star in } L^\infty(0,T;\Wa)\text{ and weakly in }H^1(0,T;L^2(\Omega))
		\end{align*}
		as $k\nearrow\infty$.
		Taking into account
		$$
			u_{\tau_k}-u_{\tau_k}^-={\tau_k}\partial_t\widehat u_{\tau_k}\to 0\text{ strongly in }L^{\infty}(0,T;L^2(\Omega;\R^n)),
		$$
		we obtain $u=u^-=\widehat u$.
		Analogously, we get $v=v^-=\widehat v$ and $z=z^-=\widehat z$.
		The identity $\partial_t\widehat u_{\tau_k}=v_{\tau_k}$ implies $\partial_t u=v$.
		
		Therefore, we obtain
		\begin{align*}
			&u\in L^\infty(0,T;H^2(\Omega;\R^n))\cap W^{1,\infty}(0,T;L^2(\Omega;\R^n))\cap H^2(0,T;(H_{\Gamma_\mathrm{D}}^2(\Omega;\R^n))^*),\\
			&z\in L^\infty(0,T;\Wa)\cap H^1(0,T;L^2(\Omega))
		\end{align*}
		such that \eqref{eqn:convU1}, \eqref{eqn:convU3}, \eqref{eqn:convV1}, \eqref{eqn:convV2}, \eqref{eqn:convZ1} and \eqref{eqn:convZ3}
		is satisfied for the subsequence $\{\tau_k\}_{k\in\N}$.
		
		Compactness arguments (in particular using the embedding $W^{1,p}(\Omega)\hookrightarrow C(\ol\Omega)$ valid for $p\in(n,\infty)$
		and Aubin-Lions type results; see \cite{Simon}) show \eqref{eqn:convU2}, \eqref{eqn:convU4}, \eqref{eqn:convZ2} and \eqref{eqn:convZ4}.
		\ep
	\end{proof}
	\begin{remark}
	\label{remark:pointwiseConv}
		By choosing further subsequences (we omit the additional subscript), we also obtain for fixed $\delta>0$
		\begin{align*}
			&u_{\tau_k},u_{\tau_k}^-,\widehat u_{\tau_k}\to u&&\text{pointwise a.e. in }\Omega\times(0,T),\\
			&\nabla u_{\tau_k},\nabla u_{\tau_k}^-,\nabla \widehat u_{\tau_k}\to\nabla u&&\text{pointwise a.e. in }\Omega\times(0,T),\\
			&z_{\tau_k},z_{\tau_k}^-,\widehat z_{\tau_k}\to z&&\text{pointwise a.e. in }\Omega\times(0,T)
		\end{align*}
		and for a.e. $t\in(0,T)$
		\begin{align*}
			\hspace*{1em}&u_{\tau_k}(t),u_{\tau_k}^-(t),\widehat u_{\tau_k}(t)\to u(t)&&\text{weakly in }H^2(\Omega;\Rn),\hspace*{4.0em}\\
			&z_{\tau_k}(t),z_{\tau_k}^-(t),\widehat z_{\tau_k}(t)\to z(t)&&\text{weakly in }W^{1,p}(\Omega)
		\end{align*}
		as $k\nearrow\infty$.
	\end{remark}
	Strong convergence of  $\nabla z_{\tau_k}\to\nabla z$ in $L^p(\Omega\times(0,T))$ can
	be shown by a subtle approximation argument introduced in \cite{WIAS1520}.
	\begin{lemma}
	\label{lemma:strongConvZ}
		There exists a subsequence of $\tau_k$ (omitting the additional subscript) such that
		$z_{\tau_k}\to z$ in $L^p(0,T;W^{1,p}(\Omega))$ as $k\nearrow\infty$.
	\end{lemma}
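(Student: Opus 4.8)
The plan is to prove the missing strong convergence of the gradients, $\nabla z_{\tau_k}\to\nabla z$ in $L^p(\Omega_T)$, by a Minty--Browder monotonicity argument for the $p$-Laplacian; strong convergence $z_{\tau_k}\to z$ in $L^p(\Omega_T)$ is already contained in \eqref{eqn:convZ2}, so combining the two gives the claim. The only genuinely non-routine ingredient is producing, for the one-sidedly constrained discrete damage inequality \eqref{eqn:discrDamageIneq}, a test function that both obeys the constraint $0\le\zeta\le z_\tau^-$ and converges \emph{strongly} to the limit $z$; this is exactly the purpose of the approximation Lemma~\ref{lemma:approximation}.

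First I would apply Lemma~\ref{lemma:approximation} with $f_\tau:=z_\tau^-$, $f:=z$ and $\zeta:=z$. All hypotheses hold: $z_\tau^-,z\in L^\infty(0,T;W^{1,p}_+(\Omega))$ by \eqref{eqn:aprioriZ1} and \eqref{eqn:weakVarIneq2b}; $z_{\tau_k}^-(t)\to z(t)$ weakly in $W^{1,p}(\Omega)$ for a.e.\ $t\in(0,T)$ by Remark~\ref{remark:pointwiseConv}; the inclusion $\{z=0\}\supseteq\{z=0\}$ is trivial; and since $z\le z$ a.e.\ in $\Omega_T$ the \emph{refined} conclusion of the lemma applies. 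This yields a sequence $\zeta_\tau\in L^\infty(0,T;W^{1,p}_+(\Omega))$ with $0\le\zeta_\tau\le z_\tau^-$ a.e.\ in $\Omega_T$ and with $\zeta_{\tau_k}\to z$ strongly in $L^q(0,T;W^{1,p}(\Omega))$ for every $q\in[1,\infty)$; in particular $\zeta_\tau(t)$ is an admissible competitor in \eqref{eqn:discrDamageIneq} for a.e.\ $t$.

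Next, abbreviating $V(a):=|a|^{p-2}a$ and $R_\tau:=W_{,z}(\e(u_\tau),z_\tau)+f'(z_\tau)+\partial_t\widehat z_\tau$, I would test \eqref{eqn:discrDamageIneq} with $\zeta_\tau$ and integrate over $(0,T)$ to obtain
\begin{align*}
\int_0^T\!\!\int_\Omega V(\nabla z_{\tau_k})\cdot(\nabla z_{\tau_k}-\nabla\zeta_{\tau_k})\dxt\le\int_0^T\!\!\int_\Omega R_{\tau_k}\,(\zeta_{\tau_k}-z_{\tau_k})\dxt .
\end{align*}
The right-hand side tends to $0$ as $k\to\infty$, because by \eqref{eqn:defW}, \eqref{eqn:aprioriU2}, \eqref{eqn:aprioriZ2} and $0\le z_{\tau_k}\le1$ the factor $R_{\tau_k}$ is bounded in $L^\infty(0,T;L^1(\Omega))+L^2(0,T;L^2(\Omega))$, while $\zeta_{\tau_k}-z_{\tau_k}\to0$ in $L^q(0,T;L^\infty(\Omega))$ for all $q\in[1,\infty)$ by the previous step, \eqref{eqn:convZ2} and $W^{1,p}(\Omega)\hookrightarrow C(\ol\Omega)$. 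Now decompose the nonnegative quantity
\begin{align*}
I_k:=\int_0^T\!\!\int_\Omega\big(V(\nabla z_{\tau_k})-V(\nabla z)\big)\cdot(\nabla z_{\tau_k}-\nabla z)\dxt
=A_k+B_k-C_k,
\end{align*}
where $A_k:=\int_0^T\!\!\int_\Omega V(\nabla z_{\tau_k})\cdot(\nabla z_{\tau_k}-\nabla\zeta_{\tau_k})\dxt$, $B_k:=\int_0^T\!\!\int_\Omega V(\nabla z_{\tau_k})\cdot(\nabla\zeta_{\tau_k}-\nabla z)\dxt$ and $C_k:=\int_0^T\!\!\int_\Omega V(\nabla z)\cdot(\nabla z_{\tau_k}-\nabla z)\dxt$. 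Then $\limsup_k A_k\le0$ by the displayed inequality; $B_k\to0$ since $V(\nabla z_{\tau_k})$ is bounded in $L^{p'}(\Omega_T)$ and $\nabla\zeta_{\tau_k}\to\nabla z$ strongly in $L^p(\Omega_T)$ (the $q=p$ case of Step~1); and $C_k\to0$ since $V(\nabla z)\in L^{p'}(\Omega_T)$ and $\nabla z_{\tau_k}\weaklim\nabla z$ in $L^p(\Omega_T)$ by \eqref{eqn:convZ1}. Hence $\limsup_k I_k\le0$, and since $I_k\ge0$ we get $I_k\to0$.

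Finally, the classical strong-monotonicity inequalities for $V$ --- for $p\ge2$, $(V(a)-V(b))\cdot(a-b)\ge c\,|a-b|^p$; for $1<p<2$, $|a-b|^p\le c\,[(V(a)-V(b))\cdot(a-b)]^{p/2}(|a|+|b|)^{p(2-p)/2}$, combined with H\"older's inequality and the uniform $L^p(\Omega_T)$-bound on $\nabla z_{\tau_k}$ --- upgrade $I_k\to0$ to $\nabla z_{\tau_k}\to\nabla z$ strongly in $L^p(\Omega_T)$; together with $z_{\tau_k}\to z$ in $L^p(\Omega_T)$ from \eqref{eqn:convZ2} this gives $z_{\tau_k}\to z$ in $L^p(0,T;W^{1,p}(\Omega))$, after passing to a further subsequence. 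I expect the main obstacle to be exactly Step~1: the discrete inequality \eqref{eqn:discrDamageIneq} is only available on the $\tau$-dependent admissible set $\{0\le\zeta\le z_\tau^-\}$, so one cannot simply insert the limit $z$, and it is the ``subtle approximation argument'' of \cite{WIAS1520} encoded in Lemma~\ref{lemma:approximation} --- in particular its refined conclusion $\zeta_\tau\le z_\tau^-$ --- that makes the monotonicity trick work; everything after that is routine.
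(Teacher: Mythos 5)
Your proposal is correct and follows essentially the same route as the paper: both construct the admissible test functions $\zeta_\tau$ with $0\le\zeta_\tau\le z_\tau^-$ via Lemma \ref{lemma:approximation}, test \eqref{eqn:discrDamageIneq} with them, and split the monotonicity expression into exactly the same three terms (your $A_k$, $B_k$, $C_k$ are the paper's three right-hand-side integrals in \eqref{eqn:zEst}), handled by the same boundedness and convergence facts. The only difference is cosmetic: you spell out the strong-monotonicity inequalities for both $p\ge2$ and $1<p<2$, whereas the paper simply invokes ``uniform $p$-monotonicity''; this is a harmless (indeed slightly more careful) refinement, not a different argument.
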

	\begin{proof}
		We apply the preliminary result cited in Lemma \ref{lemma:approximation} based on the work \cite{WIAS1520}.
		By Remark \ref{remark:pointwiseConv}, the sequence $\{z_{\tau_k}\}_{k\in\N}$ fulfills the assumption.
		According to Lemma \ref{lemma:approximation} there exists an approximation sequence $\{\zeta_{\tau_k}\}\subseteq L^\infty(0,T;W_+^{1,p}(\Omega))$ with
		the properties
		\begin{align}
		\label{eqn:zApprox}
			&\zeta_{\tau_k}\rightarrow z\hspace*{4em}&&\text{strongly in }L^p(0,T;W^{1,p}(\Omega)),\hspace*{8em}\\
		\label{eqn:zApprox2}
			&0\leq \zeta_{\tau_k}\leq z_{\tau_k}^-&&\text{pointwise a.e. in }\Omega\times(0,T)\text{ for all }k\in\N.
		\end{align}
		We omit the subscript $k$ for notational convenience.
		Property \eqref{eqn:zApprox2}
		enables us to test \eqref{eqn:discrDamageIneq} with $\zeta_\tau$. Integration over the time variable shows
		$$
			\int_0^T\int_{\Omega}|\nabla z_\tau|^{p-2}\nabla z_\tau\cdot\nabla(z_\tau-\zeta_\tau)\dxt
				\leq\int_0^T\int_{\Omega}\bl W_{,z}(\e(u_\tau),z_\tau)+f'(z_\tau)+\partial_t\widehat z_\tau\br(\zeta_\tau-z_\tau)\dxt.
		$$
		A uniform $p$-monotonicity argument and the above estimate show ($c>0$ is a constant)
		%($c>0$ is a constant independent of $\tau$)
		\begin{align}
			&c\|\nabla z-\nabla z_\tau\|_{L^p(0,T;L^p(\Omega;\Rn))}^p\notag\\
				&\qquad\leq \int_0^T\int_{\Omega}\bl|\nabla z|^{p-2}\nabla z-|\nabla z_\tau|^{p-2}\nabla z_\tau\br\cdot\nabla(z-z_\tau)\dxt\notag\\
				&\qquad=\int_0^T\int_{\Omega}|\nabla z|^{p-2}\nabla z\cdot\nabla(z-z_\tau)\dxt
					+\int_0^T\int_{\Omega} |\nabla z_\tau|^{p-2}\nabla z_\tau\cdot\nabla(z_\tau-\zeta_\tau)\dxt\notag\\
					&\qquad\quad+\int_0^T\int_{\Omega} |\nabla z_\tau|^{p-2}\nabla z_\tau\cdot\nabla(\zeta_\tau-z)\dxt\notag\\
			&\qquad\leq
				\int_0^T\int_{\Omega}\bl W_{,z}(\e(u_\tau),z_\tau)+f'(z_\tau)+\partial_t\widehat z_\tau\br(\zeta_\tau-z_\tau)\dxt\notag\\
				&\qquad\quad+\int_0^T\int_{\Omega}|\nabla z|^{p-2}\nabla z\cdot\nabla(z-z_\tau)\dxt
				+\int_0^T\int_{\Omega} |\nabla z_\tau|^{p-2}\nabla z_\tau\cdot\nabla(\zeta_\tau-z)\dxt.
		\label{eqn:zEst}
		\end{align}
		In the following, we prove that every term on the right hand side converges to $0$ as $\tau\searrow 0$.
		\begin{itemize}
			\item[$\bullet$]
				The first integral on the r.h.s of \eqref{eqn:zEst} can be estimated as follows
				\begin{align}
					&\int_0^T\int_{\Omega}\bl W_{,z}(\e(u_\tau),z_\tau)+f'(z_\tau)+\partial_t\widehat z_\tau\br(\zeta_\tau-z_\tau)\dxt\notag\\
						&\qquad\leq
							\big\|W_{,z}(\e(u_\tau),z_\tau)+f'(z_\tau)\big\|_{L^2(0,T;L^1(\Omega))}\|\zeta_\tau-z_\tau\|_{L^2(0,T;L^\infty(\Omega))}\notag\\
				\label{eqn:damageIneq}
						&\qquad\quad+\big\|\partial_t\widehat z_\tau\big\|_{L^2(\Omega_T)}\big\|\zeta_\tau-z_\tau\big\|_{L^2(\Omega_T)}.
				\end{align}
%				Note that for this estimate it suffices to have boundedness of $u_\tau$ in\linebreak $L^\infty(0,T;H^1(\Omega;\R^n))$ instead of the much stronger
%				result \eqref{eqn:apriori2}
%				(the situation will change for the passage $\delta\searrow 0$ in Subsection \ref{section:limit}).
				By using the boundedness of $u_\tau$ in $L^\infty(0,T;H^1(\Omega;\R^n))$,
				boundedness of $\partial_t \widehat z_\tau$ in $L^2(0,T;L^2(\Omega))$ (see Lemma \ref{lemma:aprioriDiscr}),
				boundedness $z_\tau\in[0,1]$ a.e. in $\Omega_T$
				and the convergence properties \eqref{eqn:convZ2} and \eqref{eqn:zApprox},
				we obtain convergence to $0$ as of the two summands on the right
				hand side of \eqref{eqn:damageIneq}.
			\item[$\bullet$]
				Due to the convergence \eqref{eqn:convZ1}, the second integral on the r.h.s. of \eqref{eqn:zEst} converges to $0$ as $\tau\searrow 0$.
			\item[$\bullet$]
				We estimate the third integral on the r.h.s. of \eqref{eqn:zEst} by H\"older's inequality:
				\begin{align*}
					\int_0^T\int_\Omega |\nabla z_\tau|^{p-2}\nabla z_\tau\cdot\nabla(\zeta_\tau-z)\dxt
						\leq \|\nabla z_\tau\|_{L^p(0,T;L^p(\Omega))}^{p-1}\|\nabla(\zeta_\tau-z)\|_{L^p(0,T;L^p(\Omega))}.
				\end{align*}
				Because of the boundedness property \eqref{eqn:aprioriZ1} and the strong convergence property \eqref{eqn:zApprox},
				we obtain convergence to $0$ of the integral term above.
		\end{itemize}
		Combing the convergence result $\nabla z_{\tau}\to \nabla z$ strongly in $L^p(0,T;L^p(\Omega;\Rn))$ as $\tau\searrow 0$
		with Lemma \ref{lemma:discrConvergence}, the claim follows.
		\ep
	\end{proof}
	
	The notion of weak solutions as given in Definition \ref{def:regWeakSolution} requires
	the validity of the total energy-dissipation inequality.
	However, in this discrete setting, we are only able to prove an approximate version of this inequality.
	But the $H^2$-regularization enables us to recover the postulated total energy-dissipation inequality in the limit
	$\tau\searrow 0$ as already indicated in Remark \ref{remark:reg}.
	\begin{lemma}
	\label{lemma:discrEI}
		For a.e. $t\in(0,T)$ the approximate energy-dissipation inequality
		\begin{align}
			\C F_\tau(t)+\C K_\tau(t)+\C D_\tau(0,t)+\C E_\tau(0,t)
				\leq \C F(0)+\C K(0)+\C W_{ext}^\tau(0,t)
		\label{eqn:discrEI}
		\end{align}
		with
		\begin{align*}
			&\C F_\tau(t):=\int_\Omega\bl\frac 1p|\nabla z_\tau(t)|^p+W(\e(u_\tau(t)),z_\tau(t))+f(z_\tau(t))\br\dx
				+\frac\delta2 \langle A u_\tau(t),u_\tau(t)\rangle_{H^2},\\
			&\C K_\tau(t):=\int_\Omega\frac12|v_\tau(t)|^2\dx,\\
			&\C D_\tau(0,t):=\int_0^{t_\tau}\int_\Omega|\partial_t\widehat z_\tau|^2\dxs,\\
			&\C W_{ext}^\tau(0,t):=
				\int_0^{t_\tau}\int_\Omega W_{,e}(\e(u_\tau),z_\tau):\e(\partial_t\widehat b_\tau)\dxs
				-\int_0^{t_\tau}\int_\Omega v_\tau^-(s)\cdot\frac{\partial_t\widehat b_\tau(s)-\partial_t\widehat b_\tau(s-\tau)}{\tau}\dxs\\
				&\qquad\qquad\qquad+\int_\Omega v_\tau(t)\cdot\partial_t \widehat b_\tau(t)\dx
				-\int_\Omega v^0\cdot\partial_t\widehat b_\tau(0)\dx
				+\int_0^{t_\tau}\int_\Omega \ell_\tau\cdot\bl \partial_t\widehat u_\tau-\partial_t\widehat b_\tau\br\dxs\\
				&\qquad\qquad\qquad+\delta\int_0^{t_\tau}\langle A u_\tau(s),\partial_t\widehat b_\tau(s)\rangle_{H^2}\ds
		\end{align*}
		and the ``error term''
		\begin{align*}
			\C E_\tau(0,t):={}&
				\int_0^{t_\tau}\int_\Omega\frac 12\frac{h(z_\tau^-)-h(z_\tau)}{\tau}\mathbf C\e(u_\tau^-):\e(u_\tau^-)\dxs\\
				&+\int_0^{t_\tau}\int_\Omega W_{,z}(\e(u_\tau),z_\tau)\,\partial_t \widehat z_\tau\dxs\\
				&-\int_0^{t_\tau}\int_\Omega\frac{f(z_\tau)-f(z_\tau^-)}{\tau}\dxs
				+\int_0^{t_\tau}\int_\Omega f'(z_\tau)\,\partial_t \widehat z_\tau\dxs
		\end{align*}
		holds.
	\end{lemma}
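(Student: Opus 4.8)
The plan is to derive \eqref{eqn:discrEI} directly from the two discrete relations \eqref{eqn:discrMomentumBalance}--\eqref{eqn:discrDamageIneq} by testing each of them at the $m$-th time step with the natural discrete increment, exploiting convexity of the quadratic and of the $|\cdot|^p$ nonlinearities, and telescoping over $m=1,\dots,\lceil t/\tau\rceil$. First I would test the discrete force balance \eqref{eqn:discrMomentumBalance} with $\zeta=(u_\tau^m-u_\tau^{m-1})-(b_\tau^m-b_\tau^{m-1})$, which belongs to $H^2_{\GammaD}(\Omega;\R^n)$ because $u_\tau^m$ and $b_\tau^m$ carry the same Dirichlet trace. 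For the inertial term I use $\partial_t\widehat v_\tau=(v_\tau^m-v_\tau^{m-1})/\tau$ and $u_\tau^m-u_\tau^{m-1}=\tau v_\tau^m$ together with $a\cdot(a-b)\geq\tfrac12(|a|^2-|b|^2)$, which telescopes to $\tfrac12\|v_\tau(t)\|_{L^2}^2-\tfrac12\|v^0\|_{L^2}^2$ (note $v_\tau^0=(u^0-(u^0-\tau v^0))/\tau=v^0$); for the fourth-order term I use the analogous convexity estimate $\langle Au_\tau^m,u_\tau^m-u_\tau^{m-1}\rangle_{H^2}\geq\tfrac12\langle Au_\tau^m,u_\tau^m\rangle_{H^2}-\tfrac12\langle Au_\tau^{m-1},u_\tau^{m-1}\rangle_{H^2}$.

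For the elastic term I use convexity of $e\mapsto\tfrac12\CC e:e$, multiply by $h(z_\tau^m)\geq 0$, and then rewrite, recalling $W(e,z)=\tfrac12 h(z)\CC e:e$,
\begin{align*}
 \int_\Omega W_{,e}(\e(u_\tau^m),z_\tau^m):\e(u_\tau^m-u_\tau^{m-1})\dx
 \geq \int_\Omega\!\Big(W(\e(u_\tau^m),z_\tau^m)-W(\e(u_\tau^{m-1}),z_\tau^{m-1})\Big)\dx
 +\int_\Omega\tfrac12\big(h(z_\tau^{m-1})-h(z_\tau^m)\big)\CC\e(u_\tau^{m-1})\!:\!\e(u_\tau^{m-1})\dx ,
\end{align*}
so that upon summation the telescoping free-energy difference appears, plus exactly the first contribution to $\C E_\tau$, namely $E_1:=\int_0^{t_\tau}\!\int_\Omega\tfrac12\tfrac{h(z_\tau^-)-h(z_\tau)}{\tau}\CC\e(u_\tau^-)\!:\!\e(u_\tau^-)\dxs$. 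The terms carrying the increment $b_\tau^m-b_\tau^{m-1}=\tau\partial_t\widehat b_\tau$, the load $\ell_\tau^m$, and $\delta Au_\tau^m$ reassemble --- after the discrete integration-by-parts identity \eqref{eqn:discrIntegrByParts} is applied to $\sum_m\int_\Omega\partial_t\widehat v_\tau\cdot(b_\tau^m-b_\tau^{m-1})\dx=\int_0^{t_\tau}\!\int_\Omega\partial_t\widehat v_\tau\cdot\partial_t\widehat b_\tau\dxs$ --- into precisely $\C W_{ext}^\tau(0,t)$. Next I would test \eqref{eqn:discrDamageIneq} with $\zeta=z_\tau^{m-1}$, admissible since $0\leq z_\tau^{m-1}\leq z_\tau^-=z_\tau^{m-1}$ on the $m$-th subinterval; using $z_\tau^{m-1}-z_\tau^m=-\tau\partial_t\widehat z_\tau$ and the $p$-monotonicity bound $|\nabla z_\tau^m|^{p-2}\nabla z_\tau^m\cdot\nabla(z_\tau^m-z_\tau^{m-1})\geq\tfrac1p(|\nabla z_\tau^m|^p-|\nabla z_\tau^{m-1}|^p)$, this yields $\tfrac1p\|\nabla z_\tau^m\|_{L^p}^p-\tfrac1p\|\nabla z_\tau^{m-1}\|_{L^p}^p+\tau\|\partial_t\widehat z_\tau\|_{L^2}^2\leq-\tau\int_\Omega(W_{,z}(\e(u_\tau^m),z_\tau^m)+f'(z_\tau^m))\partial_t\widehat z_\tau\dx$, which telescopes to $\tfrac1p\|\nabla z_\tau(t)\|_{L^p}^p-\tfrac1p\|\nabla z^0\|_{L^p}^p+\C D_\tau(0,t)\leq-\int_0^{t_\tau}\!\int_\Omega(W_{,z}+f')\partial_t\widehat z_\tau\dxs$.

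Adding the two summed inequalities, I would then add $\int_\Omega f(z_\tau(t))\dx$ to both sides and on the right write it as $\int_\Omega f(z^0)\dx+\int_0^{t_\tau}\!\int_\Omega\tfrac{f(z_\tau)-f(z_\tau^-)}{\tau}\dxs$: the left-hand side becomes $\C F_\tau(t)+\C K_\tau(t)+\C D_\tau(0,t)+E_1$, the initial terms collect into $\C F(0)+\C K(0)$ (including the initial regularization energy $\tfrac\delta2\langle Au^0,u^0\rangle_{H^2}$, consistently with the definition of $\C F_\tau$), and moving the leftover right-hand contributions $-\int_0^{t_\tau}\!\int_\Omega(W_{,z}+f')\partial_t\widehat z_\tau\dxs$ and $+\int_0^{t_\tau}\!\int_\Omega\tfrac{f(z_\tau)-f(z_\tau^-)}{\tau}\dxs$ back to the left and combining them with $E_1$ reproduces, by the very definition of $\C E_\tau$, exactly $\C E_\tau(0,t)$. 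This is \eqref{eqn:discrEI}; since all interpolants are piecewise constant in time, it in fact holds for every $t\in(0,T)$.

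The hard part is keeping the bookkeeping of the elastic term straight: convexity is available only in the strain variable, so the ``lagged'' damage level $z_\tau^m$ in $W_{,e}(\e(u_\tau^m),z_\tau^m)$ must be transported through the convexity step, which produces the mismatch $\tfrac12(h(z_\tau^{m-1})-h(z_\tau^m))\CC\e(u_\tau^{m-1})\!:\!\e(u_\tau^{m-1})$ --- nonnegative here because $h'\geq 0$ and $z_\tau^m\leq z_\tau^{m-1}$ --- and this is precisely the error term $\C E_\tau$ that the time-discrete scheme cannot avoid but that the $H^2$-regularization will later allow to be controlled (through the $L^4(\Omega)$-bound on $\e(u_\tau)$) in the limit $\tau\searrow 0$.
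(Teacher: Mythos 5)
Your proof is correct and takes essentially the same route as the paper's: testing the discrete force balance with $u_\tau-u_\tau^--(b_\tau-b_\tau^-)$ and using the sharpened convexity estimate for $W$ that produces the $\frac12(h(z_\tau^-)-h(z_\tau))\CC\e(u_\tau^-):\e(u_\tau^-)$ mismatch, testing the discrete damage inequality with $z_\tau^-$ together with the $|\cdot|^p$-convexity bound, telescoping, and invoking the discrete integration-by-parts identity \eqref{eqn:discrIntegrByParts}. Your reading of $\C F(0)$ on the right-hand side as including the initial regularization energy $\frac\delta2\langle Au^0,u^0\rangle_{H^2}$ is indeed the intended one.
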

	\begin{proof}
		By employing the estimate (which is slightly sharper than the convexity estimate \eqref{eqn:West})
		\begin{align*}
			&\int_\Omega W_{,e}(\e(u_\tau),z_\tau):\e(u_\tau-u_\tau^-)\dx\notag\\
			&\qquad\geq\int_\Omega\bl W(\e(u_\tau),z_\tau)-W(\e(u_\tau^-),z_\tau^-)\br\dx\\
			&\qquad\quad+\int_\Omega\frac 12\bl h(z_\tau^-)-h(z_\tau)\br\mathbf C\e(u_\tau^-):\e(u_\tau^-)\dx,
		\end{align*}
		we obtain by testing \eqref{eqn:discrMomentumBalance} with
		$u_\tau-u_\tau^--(b_\tau-b_\tau^-)$ (cf. \eqref{eqn:testEq1}):
		\begin{align}
			&\frac12\left\|v_\tau(t)\right\|_{L^2}^2-\frac12\left\|v_\tau^-(t)\right\|_{L^2}^2
				+\frac\delta2 \langle A u_\tau(t),u_\tau(t)\rangle_{H^2}-\frac\delta2 \langle A u_\tau^-(t),u_\tau^-(t)\rangle_{H^2}\notag\\
			&+\int_\Omega\bl W(\e(u_\tau(t)),z_\tau(t))-W(\e(u_\tau^-(t)),z_\tau^-(t))\br\dx
				-\int_\Omega\partial_t \widehat v_\tau(t)\cdot\bl b_\tau(t)-b_\tau^-(t)\br\dx\notag\\
			&+\int_\Omega\frac 12\bl h(z_\tau^-(t))-h(z_\tau(t))\br\mathbf C\e(u_\tau^-(t)):\e(u_\tau^-(t))\dx\notag\\
			&\qquad\leq \int_\Omega \ell_\tau(t)\cdot\bl u_\tau(t)-u_\tau^-(t)-(b_\tau(t)-b_\tau^-(t))\br\dx\notag\\
		\label{eqn:testMom2}
			&\qquad\quad+\int_\Omega W_{,e}(\e(u_\tau(t)),z_\tau(t)):\e(b_\tau(t)-b_\tau^-(t))\dx
				+\delta\langle A u_\tau(t),b_\tau(t)-b_\tau^-(t)\rangle_{H^2}.
		\end{align}
		By testing \eqref{eqn:discrDamageIneq} with $z_\tau$, we obtain
		\begin{align}
			&\frac 1p\|\nabla z_\tau(t)\|_{L^p}^p-\frac1p\|\nabla z_\tau^-(t)\|_{L^p}^p
				+\tau\left\|\partial_t\widehat z_\tau(t)\right\|_{L^2}^2\notag\\
		\label{eqn:testDamage2}
			&\qquad\leq
				\int_\Omega\bl W_{,z}(\e(u_\tau(t)),z_\tau(t))+f'(z_\tau(t))\br(z_\tau^-(t)-z_\tau(t))\dx.
		\end{align}
		as in the proof of Lemma \ref{lemma:aprioriDiscr}.
		Adding the estimates \eqref{eqn:testMom2} and \eqref{eqn:testDamage2},
		summing over the discrete time points and
		taking into account formula \eqref{eqn:discrIntegrByParts}
		yields \eqref{eqn:discrEI}.
		\ep
	\end{proof}

	We are now in the position to establish the equalities and inequalities of the weak formulation of Definition \ref{def:regWeakSolution}
	by passing $\tau\searrow 0$.
	As before, we omit the subscript $k$ in the sequence $\{\tau_k\}_{k\in\N}$.
	
	\textbf{Proof of Proposition \ref{proposition:regSystem}}
	
	The functions $u$ and $z$ from Lemma \ref{lemma:discrConvergence} already satisfy
	\eqref{eqn:initialBoundary}, \eqref{eqn:weakVarIneq1b} and \eqref{eqn:weakVarIneq2b}.
	It remains to show \eqref{eqn:weakVarIneq2a}, \eqref{eqn:weakDamageLaw2}, \eqref{eqn:weakRegMomentumBalance} and \eqref{eqn:weakRegEnergyInequality}
	from Definition \ref{def:regWeakSolution}.
	\begin{itemize}
		\item[]\hspace*{-2em}\underline{To \eqref{eqn:weakRegMomentumBalance}:}
			We find for all $\zeta\in H_{\Gamma_\mathrm{D}}^2(\Omega;\R^n)$
			$$
				\int_\Omega\partial_t\widehat v_\tau(t)\cdot\zeta\dx=\langle\partial_t\widehat v_\tau(t),\zeta\rangle_{H_\GammaD^2}
			$$
			by using the canonical embedding $L^2(\Omega;\R^n)\hookrightarrow (H_{\Gamma_\mathrm{D}}^2(\Omega;\R^n))^*$.
			Keeping this identity in mind, integrating \eqref{eqn:discrMomentumBalance} over time from $t=0$ to $t=T$ and passing to the limit $\tau\searrow 0$ for
			a subsequence by using the convergence properties in Lemma \ref{lemma:discrConvergence}, we obtain
			a time-integrated vesion of \eqref{eqn:weakRegMomentumBalance}.
			Then, switching back to an ``a.e. in $t$''-formulation shows \eqref{eqn:weakRegMomentumBalance}.
		\item[]\hspace*{-2em}\underline{To \eqref{eqn:weakVarIneq2a} and \eqref{eqn:weakDamageLaw2}:}
%			The main difficulty is to obtain the variational inequalities \eqref{eqn:weak2} and \eqref{eqn:weak3}.
			The limit analysis for these equations are performed in two steps and makes use of the approximation technique cited in Lemma \ref{lemma:approximation}
			and the extension result cited in Lemma \ref{lemma:varProp}.
			\begin{itemize}
				\item[Step 1:]
					Let $\zeta\in L^\infty(0,T;W_-^{1,p}(\Omega))$ with $\{\zeta=0\}\supseteq\{z=0\}$ (see \eqref{eqn:setInclusion}).
					By Lemma \ref{lemma:approximation}, we obtain a sequence $\{\zeta_{\tau_k}\}\subseteq L^\infty(0,T;W_-^{1,p}(\Omega))$ (we omit $k$)
					and constants $\nu_{\tau,t}>0$ with the properties:
					\begin{subequations}
					\begin{align}
					\label{eqn:approxProp1}
						&\zeta_\tau\rightarrow\zeta&&\text{ strongly in }L^p(0,T;W^{1,p}(\Omega)),\\
					\label{eqn:approxProp2}
						&0\geq \nu_{\tau,t} \zeta_\tau(t)\geq -z_\tau(t)\text{ in }\Omega&&\text{ for a.e. }t\in(0,T)\text{ and all }\tau>0.
					\end{align}
					\end{subequations}
					The property \eqref{eqn:approxProp2} and $z_\tau\leq z_\tau^-$ holding pointwise a.e. in $\Omega\times(0,T)$, we also find for a.e.~$t\in(0,T)$
					$$
						0\leq\nu_{\tau,t}\zeta_\tau(t)+z_\tau(t)\leq z_\tau^-(t)\text{ a.e. in }\Omega.
					$$
					In consequence, for a.e.~$t\in(0,T)$, we can test \eqref{eqn:discrDamageIneq} with $\nu_{\tau,t}\zeta_\tau(t)+z_\tau(t)$ and obtain
					\begin{align*}
						&\nu_{\tau,t}\int_\Omega\Big(|\nabla z_\tau(t)|^{p-2}\nabla z_\tau(t)\cdot\nabla\zeta_\tau(t)+(W_{,z}(\e(u_\tau(t)),z_\tau(t))+f'(z_\tau(t))
						+\partial_t\widehat z_\tau(t))\zeta_\tau(t)\Big)\dx\\
						&\qquad\geq 0.
					\end{align*}
					We divide this inequality by the positive constant $\nu_{\tau,t}$ and integrate over the time interval $[0,T]$.
					The time-integration is necessary to exploit the weak convergence property for $\partial_t\widehat z_\tau(t))$ in $L^2(0,T;L^2(\Omega))$.
					More precisely, we use the convergence properties in Lemma \ref{lemma:discrConvergence}, Remark \eqref{remark:pointwiseConv} and
					Lemma \ref{lemma:strongConvZ} to pass to the limit $\tau\searrow 0$ for a subsequence and end up with
					\begin{align*}
						0\leq\int_0^T\int_{\Omega}\bl|\nabla z|^{p-2}\nabla z\cdot\nabla\zeta+(W_{,z}(\e(u),z)+f'(z)+\partial_t z)\zeta\br\dxt.
					\end{align*}
					In particular, we get an a.e. in time $t$ formulation.
				\item[Step 2:]
					We may apply Lemma \ref{lemma:varProp} to the above variational inequality.
					Then, we obtain for all $\zeta\in W_-^{1,p}(\Omega)$ the inequality
					\begin{align}
					\label{eqn:preVI}
						0\leq\int_0^T\int_{\Omega}\bl \partial_{t}z\zeta+|\nabla z|^{p-2}\nabla z\cdot\nabla\zeta+\big(W_{,z}(\e(u),z)+f'(z)+\widehat \xi\big)\zeta\br\dxt
					\end{align}
					with $\widehat \xi\in L^1(\Omega\times(0,T))$ given by
					\begin{align*}
						\widehat \xi=-\chi_{\{z=0\}}\mathrm{max}\Big\{0,\partial_{t}z+W_{,z}(e(u),z)+f'(z)\Big\}.
					\end{align*}
					Due to $\partial_{t}z\leq 0$ a.e. in $\Omega_T$,
					we may replace $\widehat\xi$ by $\xi\in L^1(0,T;L^1(\Omega))$ in \eqref{eqn:preVI}, where $\xi$ is given by
					\begin{align*}
						\xi=-\chi_{\{z=0\}}\mathrm{max}\Big\{0,W_{,z}(e(u),z)+f'(z)\Big\}.
					\end{align*}
					We check that $\xi$ satisfies \eqref{eqn:weakVarIneq2a}, i.e., $\xi$ is a desired subgradient.
			\end{itemize}
		\item[]\hspace*{-2em}\underline{To \eqref{eqn:weakRegEnergyInequality}:}
			Let $t_1$ and $t_2$ with $0\leq t_1\leq t_2\leq T$ be arbitrary.
			Integrating \eqref{eqn:discrEI} from Lemma \ref{lemma:discrEI} over the time interval $[t_1,t_2]$ yields
			\begin{align*}
 				\int_{t_1}^{t_2}\Big(\C F_\tau(t)+\C K_\tau(t)+\C D_\tau(0,t)+\C E_\tau(0,t)\Big)\dt
					\leq \int_{t_1}^{t_2}\Big(\C F(0)+\C K(0)+\C W_{ext}^\tau(0,t)\Big)\dt
			\end{align*}
			By the convergence properties in Lemma \ref{lemma:discrConvergence} and by lower semi-continuity arguments, we obtain
			\begin{align}
%				\liminf_{\tau\searrow 0}\int_{t_1}^{t_2}\Big(\C F_\tau(t)+\C K_\tau(t)\Big)\dt
				\liminf_{\tau\searrow 0}\int_{t_1}^{t_2}\C F_\tau(t)\dt
					+\liminf_{\tau\searrow 0}\int_{t_1}^{t_2}\C K_\tau(t)\dt
				\geq{}& \int_{t_1}^{t_2}\Big(\C F(t)+\C K(t)\Big)\dt.
			\label{eqn:liminfFK}
			\end{align}
			The limit passage in the dissipation term $\int\C D_\tau(0,t)$ can be performed by Fatou's lemma, the estimate $t_\tau\geq t$,
			the weak convergence $\partial_t\widehat z_\tau\to \partial_{t}z$ in $L^2(0,T;L^2(\Omega))$ (see Lemma \ref{lemma:discrConvergence})
			and by a lower semi-continuity argument:
			\begin{align}
				\liminf_{\tau\searrow 0}\int_{t_1}^{t_2}\C D_\tau(0,t)\dt
				&\geq\liminf_{\tau\searrow 0}\int_{t_1}^{t_2}\int_0^{t_\tau}\int_\Omega|\partial_t\widehat z_\tau(s)|^2\dxs\dt\notag\\
				&\geq\liminf_{\tau\searrow 0}\int_{t_1}^{t_2}\int_0^{t}\int_\Omega|\partial_t\widehat z_\tau(s)|^2\dxs\dt\notag\\
					&\geq\int_{t_1}^{t_2}\bl\liminf_{\tau\searrow 0}\int_0^{t}\int_\Omega|\partial_t\widehat z_\tau(s)|^2\dxs\br\dt\notag\\
				&\geq \int_{t_1}^{t_2}\int_0^{t}\int_\Omega|\partial_{t}z(s)|^2\dxs\dt.
			\label{eqn:liminfD}
			\end{align}
			Moreover, Lemma \ref{lemma:discrConvergence} and Lebegue's convergence theorem lead to
			\begin{align}
				\lim_{\tau\searrow 0}\int_{t_1}^{t_2}\C W_{ext}^\tau(0,t)=0.
			\label{eqn:liminfW}
			\end{align}
			
			To treat the error term $\int\C E_\tau(0,t)$, we define $\C E_\tau(0,t)=:\C E_\tau^1(0,t)+\C E_\tau^2(0,t)$ with
			\begin{align*}
				\C E_\tau^1(0,t):={}&\int_0^{t_\tau}\int_\Omega\frac 12\frac{h(z_\tau^-)-h(z_\tau)}{\tau}\mathbf C\e(u_\tau^-):\e(u_\tau^-)\dxs
					+\int_0^{t_\tau}\int_\Omega W_{,z}(\e(u_\tau),z_\tau)\,\partial_t \widehat z_\tau\dxs\\
				\C E_\tau^2(0,t):={}&-\int_0^{t_\tau}\int_\Omega\frac{f(z_\tau)-f(z_\tau^-)}{\tau}\dxs
					+\int_0^{t_\tau}\int_\Omega f'(z_\tau)\,\partial_t \widehat z_\tau\dxs.
			\end{align*}
			By the differentiability of $h$, it holds
			$$
				h(z_\tau^-)=h(z_\tau)+h'(z_\tau)(z_\tau^--z_\tau)+r(z_\tau^--z_\tau),\;\frac{r(\eta)}{\eta}\to 0\text{ as }\eta\to 0.					
			$$
			We then get %(WLOG $z_\tau^->z_\tau$ a.e. in $\Omega$)
			\begin{align}
				&\int_0^{t_\tau}\int_\Omega\frac 12\frac{h(z_\tau^-)-h(z_\tau)}{\tau}\mathbf C\e(u_\tau^-):\e(u_\tau^-)\dxs\notag\\
				&\qquad=\int_0^{t_\tau}\int_{\{z_\tau^-(s)\neq z_\tau(s)\}}\frac 12\bl h'(z_\tau)\frac{z_\tau^--z_\tau}{\tau}
					+\frac{r(z_\tau^--z_\tau)}{z_\tau^--z_\tau}\frac{z_\tau^--z_\tau}{\tau}\br
					\mathbf C\e(u_\tau^-):\e(u_\tau^-)\dxs\notag\\
				&\qquad=
					\underbrace{\int_0^{t_\tau}\int_{\Omega}\frac 12 h'(z_\tau)\frac{z_\tau^--z_\tau}{\tau}\mathbf C\e(u_\tau^-):\e(u_\tau^-)\dxs}_{=:T_1}\notag\\
					&\qquad\quad+\underbrace{\int_0^{t_\tau}\int_{\{z_\tau^-(s)\neq z_\tau(s)\}}\frac 12\frac{r(z_\tau^--z_\tau)}{z_\tau^--z_\tau}\frac{z_\tau^--z_\tau}{\tau}
					\mathbf C\e(u_\tau^-):\e(u_\tau^-)\dxs}_{=:T_2}
			\label{eqn:energyTerm}
			\end{align}
			By using the convergence properties in Lemma \ref{lemma:discrConvergence},
			we find
			\begin{align*}
				T_1\to{}&\int_0^{t}\int_{\Omega}\frac 12 h'(z)\mathbf C\e(u):\e(u)\partial_{t}z\dxs\\
					&=\int_0^t\int_{\Omega}W_{,z}(\e(u),z)\partial_{t}z\dxs
			\end{align*}
			as $\tau\searrow 0$.
			
			The Lipschitz continuity of $h$ on the interval $[0,1]$ implies the boundedness of
			\begin{align*}
				\left\|\frac{r(z_\tau^--z_\tau)}{z_\tau^--z_\tau}\right\|_{L^\infty(\{z_\tau^-\neq z_\tau\})}
				&\leq\left\|\frac{h(z_\tau^-)-h(z_\tau)}{z_\tau^--z_\tau}\right\|_{L^\infty(\{z_\tau^-\neq z_\tau\})}
					+\left\|h'(z_\tau)\frac{z_\tau^--z_\tau}{z_\tau^--z_\tau}\right\|_{L^\infty(\{z_\tau^-\neq z_\tau\})}\\
				&\leq C.
			\end{align*}
			Taking also $\frac{r(z_\tau^--z_\tau)}{|z_\tau^--z_\tau|}\to 0$ a.e. in $\Omega\times(0,T)$ as $\tau\searrow 0$
			into account, we conclude by Lebesgue's generalized convergence theorem
			\begin{align}
				&\left\|\frac{r(z_\tau^--z_\tau)}{z_\tau^--z_\tau}\right\|_{L^q(\{z_\tau^-\neq z_\tau\})}\to0\text{ for every }q\in[1,\infty).
			\label{eqn:rTerm}
			\end{align}
			Therefore, we find by H\"older's inequality
			\begin{align*}
				T_2\leq{}&\frac12\int_0^{t_\tau}\Big\|\frac{r(z_\tau^-(s)-z_\tau(s))}{z_\tau^-(s)-z_\tau(s)}\Big\|_{L^4(\{z_\tau^-(s)\neq z_\tau(s)\})}
					\|\partial_t\widehat z_\tau(s)\|_{L^2(\Omega)}\|\e(u_\tau^-(s))\|_{L^4(\Omega)}\ds\\
				\leq{}&C\Big\|\frac{r(z_\tau^--z_\tau)}{z_\tau^--z_\tau}\Big\|_{L^4(\{z_\tau^-\neq z_\tau\})}
					\|\partial_t\widehat z_\tau\|_{L^2(0,T;L^2(\Omega))}\|u_\tau^-\|_{L^\infty(0,T;H^2(\Omega;\Rn))}
			\end{align*}
			Since $\big\|\frac{r(z_\tau^--z_\tau)}{z_\tau^--z_\tau}\big\|_{L^4(\{z_\tau^-\neq z_\tau\})}\to 0$
			by \eqref{eqn:rTerm} and $\|\partial_t\widehat z_\tau\|_{L^2(0,T;L^2(\Omega))}$ as well as $\|u_\tau^-\|_{L^\infty(0,T;H^2(\Omega;\Rn))}$
			are bounded by Lemma \ref{lemma:aprioriDiscr}, we obtain $T_2\to 0$ as $\tau\searrow 0$.
			The convergence properties in Lemma \ref{lemma:discrConvergence} also yield
			$$
				\int_0^{t_\tau}\int_\Omega W_{,z}(\e(u_\tau),z_\tau)\,\partial_t \widehat z_\tau\dxs
					\to \int_0^{t}\int_\Omega W_{,z}(\e(u),z)\,\partial_{t}z\dxs.
			$$
			In particular, we have used $\e(u_\tau)\to \e(u)$ in $L^4(0,T;L^4(\Omega;\Rnn))$
			due to the $H^2$-regularization.
			
			Together with \eqref{eqn:energyTerm} and the identified limits for the terms $T_1$ and $T_2$, prove
			$\C E_\tau^1(0,t)\to 0$ as $\tau\searrow 0$ for all $t\in[0,T]$.
			Taking also the uniform boundedness $|\C E_\tau^1(0,t)|\leq C$ with respect to $\tau>0$ and $t\in[0,T]$
			into account (which follows from the a priori estimates in Lemma \ref{lemma:aprioriDiscr}), we find
			\begin{align}
				\lim_{\tau\searrow 0}\int_{t_1}^{t_2}\C E_\tau^1(0,t)\dt= 0
			\label{eqn:liminfE1}
			\end{align}
			by Lebesgue's convergence theorem.
			Note that the limit
			\begin{align}
				\lim_{\tau\searrow 0}\int_{t_1}^{t_2}\C E_\tau^2(0,t)\dt=0
			\label{eqn:liminfE2}
			\end{align}
			can be shown with the same arguments.

				Integrating \eqref{eqn:discrEI} over the time interval $[t_1,t_2]$ and applying $\liminf_{\tau\searrow 0}$ on both sides, we obtain
				\begin{align*}
					&\liminf_{\tau\searrow 0}\int_{t_1}^{t_2}\C F_\tau(t)\dt+\liminf_{\tau\searrow 0}\int_{t_1}^{t_2}\C K_\tau(t)\dt+\liminf_{\tau\searrow 0}\int_{t_1}^{t_2}\C D_\tau(0,t)\dt
						+\liminf_{\tau\searrow 0}\int_{t_1}^{t_2}\C E_\tau(0,t)\dt\\
					&\qquad\leq \int_{t_1}^{t_2}\Big(\C F(0)+\C K(0)\Big)\dt+\liminf_{\tau\searrow 0}\int_{t_1}^{t_2}\C W_{ext}^\tau(0,t)\dt.
				\end{align*}
				The convergence properties \eqref{eqn:liminfFK}, \eqref{eqn:liminfD}, \eqref{eqn:liminfW}, \eqref{eqn:liminfE1} and \eqref{eqn:liminfE2}
				show
				\begin{align*}
					&\int_{t_1}^{t_2}\Big(\C F(t)+\C K(t)+\C D_\tau(0,t)\Big)\dt
						\leq \int_{t_1}^{t_2}\Big(\C F(0)+\C K(0)+\C W_{ext}(0,t)\Big)\dt.
				\end{align*}
				Since $t_1$ and $t_2$ with $0\leq t_1\leq t_2\leq T$ are arbitrary, we obtain \eqref{eqn:weakRegEnergyInequality}.
	\end{itemize}
	Hence, we have established existence of weak solutions in the sense of Definition \ref{def:regWeakSolution}.\ep
	
\subsection{Existence of weak solutions for the limit system}
\label{section:limit}

	In this section, we are going to prove Theorem \ref{theorem:mainResult}
	by performing a limit analysis $\delta\searrow 0$ for solution $(u_\delta,z_\delta,\xi_\delta)$
	from Proposition \ref{proposition:regSystem}.
	To this end, we approximate the data $(u^0,\ell,b)$ given in Theorem \ref{theorem:mainResult} by smooth functions (e.g. via convolution)
	$u_\delta^0\in H^2(\Omega;\Rn)$, $\ell_\delta\in C^{0,1}(0,T;L^2(\Omega;\Rn))$, $b_\delta\in C^{2,1}(0,T;H^2(\Omega;\Rn))$ such that
	\begin{subequations}
	\begin{align}
	\label{eqn:convUZeroDelta}
		u_\delta^0&\to u^0&&\text{strongly in }H^1(\Omega;\R^n),\\
	\label{eqn:convEllDelta}
		\ell_\delta&\to\ell&&\text{strongly in }L^2(0,T;L^2(\Omega;\R^n)),\\
	\label{eqn:convBDelta}
		b_\delta&\to b&&\text{strongly in }H^1(0,T;H^1(\Omega;\R^n)\cap H^2(0,T;L^2(\Omega;\R^n))
	\end{align}
	\end{subequations}
	as $\delta\searrow 0$.
	By possibly reparametrizing $\{u_\delta^0\}$, we obtain the following a priori estimate uniformly in $\delta$:
	\begin{align}
		\sqrt{\delta}\|u_\delta^0\|_{H^1}\leq C.
	\label{eqn:uZeroDeltaAPriori}
	\end{align}

	The cornerstone of the passage $\delta\searrow 0$ in the weak formulation are the following a priori estimates for $u_\delta$ and $z_\delta$ uniformly in $\delta$
	which are obtained by means of the regularized total energy-dissipation inequality
	\eqref{eqn:weakRegEnergyInequality}.

%	Note that the a piori estimates in Lemma \ref{lemma:aprioriDiscr} are not uniformly in $\delta$.
	
	\begin{lemma}[A priori estimates]
	\label{lemma:aprioriLimit}
		The following a priori estimates hold uniformly in $\delta$:
		\begin{subequations}
		\begin{align}
				\label{eqn:aprioriLimitU1}
					&\sqrt{\delta}\|u_\delta\|_{L^\infty(0,T;H^2(\Omega;\R^n))}\leq C,\\
				\label{eqn:aprioriLimitU2}
					&\|u_\delta\|_{L^\infty(0,T;H^1(\Omega;\R^n))\cap W^{1,\infty}(0,T;L^2(\Omega;\R^n))\cap H^2(0,T;(H_{\Gamma_\mathrm{D}}^2(\Omega;\R^n))^*)}\leq C,\\
				\label{eqn:aprioriLimitZ}
					&\|z_\delta\|_{L^\infty(0,T;\Wa)\cap H^1(0,T;L^2(\Omega))}\leq C.
		\end{align}
		\end{subequations}
	\end{lemma}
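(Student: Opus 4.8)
\emph{Proof sketch.} All three estimates will be extracted from the regularized total energy-dissipation inequality \eqref{eqn:weakRegEnergyInequality}, which is available for the solution $(u_\delta,z_\delta,\xi_\delta)$ of Proposition~\ref{proposition:regSystem} built on the approximating data $(u_\delta^0,z^0,\ell_\delta,b_\delta)$. The plan is to bound the left-hand side of \eqref{eqn:weakRegEnergyInequality} from below by the quantities to be controlled and the right-hand side from above by a constant plus a Gronwall integral. On the left, $A$ is non-negative with $\langle Au,u\rangle_{H^2}=\|\nabla(\nabla u)\|_{L^2}^2$, the coercivity assumptions $h\geq\eta>0$ and $e:\CC e\geq c_0|e|^2$ give $W(\e(u_\delta),z_\delta)\geq\tfrac{\eta c_0}{2}|\e(u_\delta)|^2$ on $\{0\leq z_\delta\leq1\}$, and $f\geq0$, whence
\begin{align*}
	\C F_\delta(t)\ \geq\ \tfrac1p\|\nabla z_\delta(t)\|_{L^p}^p+\tfrac{\eta c_0}{2}\|\e(u_\delta(t))\|_{L^2}^2+\tfrac\delta2\|\nabla(\nabla u_\delta(t))\|_{L^2}^2,
\end{align*}
together with $\C K(t)=\tfrac12\|\partial_t u_\delta(t)\|_{L^2}^2$ and $\C D(0,t)=\|\partial_t z_\delta\|_{L^2(0,t;L^2)}^2$.

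The right-hand side is treated term by term. One has $\C K(0)=\tfrac12\|v^0\|_{L^2}^2$ and $\C F_\delta(0)\leq C$ uniformly in $\delta$, since $\C F(0)$ is controlled by $\|z^0\|_{W^{1,p}}$, $0\leq z^0\leq1$ and $\|u_\delta^0\|_{H^1}$ (bounded via \eqref{eqn:convUZeroDelta}), while $\tfrac\delta2\langle Au_\delta^0,u_\delta^0\rangle_{H^2}$ stays bounded by the $\delta$-dependent choice of the initial datum, cf.\ \eqref{eqn:uZeroDeltaAPriori}. In $\C W_{ext}^\delta(0,t)$ we estimate the six contributions by Cauchy--Schwarz and Young: using $|W_{,e}(\e(u_\delta),z_\delta)|\leq C|\e(u_\delta)|$ and the uniform bounds on the data ($\ell_\delta$ in $L^2(0,T;L^2)$, $b_\delta$ in $H^1(0,T;H^1)\hookrightarrow C([0,T];H^1)$ and in $H^2(0,T;L^2)$, so $\partial_t b_\delta$ is bounded in $C([0,T];L^2)$), each of them is dominated by a constant plus $C\int_0^t\big(\|\partial_t u_\delta(s)\|_{L^2}^2+\|\e(u_\delta(s))\|_{L^2}^2\big)\ds$, except that $\int_\Omega\partial_t u_\delta(t)\cdot\partial_t b_\delta(t)\dx$ also produces $\tfrac14\|\partial_t u_\delta(t)\|_{L^2}^2$, which is absorbed into $\C K(t)$ on the left. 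The delicate term is $\delta\int_0^t\langle Au_\delta(s),\partial_t b_\delta(s)\rangle_{H^2}\ds$: by the Cauchy--Schwarz inequality for the bilinear form $\langle A\cdot,\cdot\rangle_{H^2}$ and Young it is at most $\tfrac12\delta\int_0^t\langle Au_\delta(s),u_\delta(s)\rangle_{H^2}\ds+\tfrac12\delta\|\partial_t b_\delta\|_{L^2(0,T;H^2)}^2$, the first summand being a Gronwall term for $\C F_\delta$ and the second kept bounded by choosing the $\delta$-mollification of $b$ so that $\sqrt\delta\,\|\partial_t b_\delta\|_{L^2(0,T;H^2)}\leq C$ --- the same reparametrization used for $u_\delta^0$ in \eqref{eqn:uZeroDeltaAPriori}.

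Collecting everything in \eqref{eqn:weakRegEnergyInequality} produces, for a.e.\ $t\in(0,T)$, an inequality of the form $\tfrac14\|\partial_t u_\delta(t)\|_{L^2}^2+\tfrac12\delta\|\nabla(\nabla u_\delta(t))\|_{L^2}^2+\tfrac{\eta c_0}{2}\|\e(u_\delta(t))\|_{L^2}^2+\tfrac1p\|\nabla z_\delta(t)\|_{L^p}^p+\|\partial_t z_\delta\|_{L^2(0,t;L^2)}^2\leq C+C\int_0^t\big(\|\partial_t u_\delta\|_{L^2}^2+\|\e(u_\delta)\|_{L^2}^2+\delta\|\nabla(\nabla u_\delta)\|_{L^2}^2\big)\ds$, so Gronwall's lemma gives $\|\partial_t u_\delta\|_{L^\infty(0,T;L^2)}+\sqrt\delta\,\|\nabla(\nabla u_\delta)\|_{L^\infty(0,T;L^2)}+\|\e(u_\delta)\|_{L^\infty(0,T;L^2)}+\|\partial_t z_\delta\|_{L^2(0,T;L^2)}+\|\nabla z_\delta\|_{L^\infty(0,T;L^p)}\leq C$. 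Since $0\leq z_\delta\leq1$, this already yields \eqref{eqn:aprioriLimitZ}; Korn's inequality applied to $u_\delta-b_\delta\in H_\GammaD^1(\Omega;\R^n)$ together with the bounds on $\e(u_\delta)$ and on $b_\delta$ gives $\|u_\delta\|_{L^\infty(0,T;H^1)}\leq C$, which combined with $\sqrt\delta\,\|\nabla(\nabla u_\delta)\|_{L^\infty(0,T;L^2)}\leq C$ gives \eqref{eqn:aprioriLimitU1}; and a comparison argument in \eqref{eqn:weakRegMomentumBalance}, bounding $|\langle\partial_{tt}u_\delta,\zeta\rangle_{H_\GammaD^2}|\leq\big(\|\ell_\delta\|_{L^2}+C\|\e(u_\delta)\|_{L^2}+\sqrt\delta\,(\sqrt\delta\,\|\nabla(\nabla u_\delta)\|_{L^2})\big)\|\zeta\|_{H^2}$, gives $\|\partial_{tt}u_\delta\|_{L^2(0,T;(H_\GammaD^2)^*)}\leq C$ and hence the remaining part of \eqref{eqn:aprioriLimitU2}. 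We expect the main difficulty to be the uniform-in-$\delta$ control of the coupling term $\delta\langle Au_\delta,\partial_t b_\delta\rangle_{H^2}$ (and of $\delta\langle Au_\delta^0,u_\delta^0\rangle_{H^2}$ inside $\C F_\delta(0)$), which is precisely what dictates the $\delta$-dependent choice of the smooth approximations of the data; the remaining steps are a routine energy estimate plus Gronwall.\ep
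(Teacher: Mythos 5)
Your proof is correct, but it takes a different route from the paper for the displacement estimates. The paper obtains \eqref{eqn:aprioriLimitU1} and \eqref{eqn:aprioriLimitU2} essentially for free: the discrete a priori estimates \eqref{eqn:aprioriU1}--\eqref{eqn:aprioriV2} in Lemma \ref{lemma:aprioriDiscr}(i) were already tracked to be uniform in both $\tau$ and $\delta$ (of the form $\sqrt{\delta}C\|u^0_\delta\|_{H^2}+C$), so after the reparametrization \eqref{eqn:uZeroDeltaAPriori} they pass to the time-continuous solution by weak lower semicontinuity of the norms; only \eqref{eqn:aprioriLimitZ} is then extracted from the energy-dissipation inequality \eqref{eqn:weakRegEnergyInequality}, and at that stage the bounds on $u_\delta$ are already available, so no Gronwall loop is needed --- $\C W_{ext}^\delta(0,t)$ is bounded outright and $\C F_\delta(t)$ follows. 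You instead rederive all three bounds at the continuous level by a closed energy--Gronwall argument on \eqref{eqn:weakRegEnergyInequality}. This is self-contained in $\delta$ (you need not re-examine the discretization), at the cost of redoing an estimate that was already performed discretely in the proof of Lemma \ref{lemma:aprioriDiscr}(i); both arguments are sound. One point in your favour: you make explicit that the term $\delta\int_0^t\langle Au_\delta,\partial_t b_\delta\rangle_{H^2}\ds$ (and $\tfrac{\delta}{2}\langle Au_\delta^0,u_\delta^0\rangle_{H^2}$ in $\C F_\delta(0)$) forces a $\delta$-dependent calibration of the mollified data, i.e.\ $\sqrt{\delta}\,\|\partial_t b_\delta\|_{H^2}\leq C$ in addition to \eqref{eqn:uZeroDeltaAPriori}; the paper's own estimate of this term, $\delta\|u_\delta\|_{L^\infty(0,T;H^2)}\|\partial_t b_\delta\|_{L^\infty(0,T;H^2)}\leq\sqrt{\delta}\,C\,\|\partial_t b_\delta\|_{L^\infty(0,T;H^2)}$, silently requires the same condition, so your remark fills a small gap rather than creating one. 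Minor quibbles: the paper states \eqref{eqn:uZeroDeltaAPriori} with the $H^1$-norm (presumably a typo for $H^2$), which is the version you actually need for $\C F_\delta(0)\leq C$; and for the $H^2(0,T;(H^2_\GammaD)^*)$-part of \eqref{eqn:aprioriLimitU2} your comparison argument in \eqref{eqn:weakRegMomentumBalance} is exactly the one the paper uses at the discrete level.
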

	\begin{proof}
		By the a priori estimates in Lemma \ref{lemma:aprioriDiscr} (i) which are independent of $\tau$ and $\delta$, by lower semi-continuity of the norm
		and by \eqref{eqn:uZeroDeltaAPriori}, we find \eqref{eqn:aprioriLimitU1} and \eqref{eqn:aprioriLimitU2}.
		
		To gain the a priori estimate \eqref{eqn:aprioriLimitZ}, we use the total
		energy-dissipation inequality \eqref{eqn:weakRegEnergyInequality}.
		It remains to estimate the following terms occurring in the $\C W_{ext}^\delta$-term:
		\begin{align*}
			&\int_0^t\int_\Omega W_{,e}(\e(u_\delta),z_\delta):\e(\partial_{t}b)\dxs
				\leq \|u_\delta\|_{L^2(0,T;H^1(\Omega;\Rn))}\|b\|_{H^1(0,T;H^1(\Omega;\Rn))},\\
			&-\int_0^t\int_{\Omega}\partial_t u_\delta\cdot \partial_{tt}b\dxs
				\leq \|u_\delta\|_{H^1(0,T;L^2(\Omega;\Rn))}\|b\|_{H^2(0,T;L^2(\Omega;\Rn))},\\
			&\int_\Omega \partial_t u_\delta(t)\cdot \partial_{t}b(t)\dx
				\leq \|u_\delta\|_{W^{1,\infty}(0,T;L^2(\Omega;\Rn))}\|b\|_{W^{1,\infty}(0,T;L^2(\Omega;\Rn))},\\
			&\int_0^t\int_{\Omega}\ell_\delta\cdot \partial_t(u_\delta-b)\dxs
				\leq \|\ell_\delta\|_{L^2(0,T;L^2(\Omega;\Rn))}\|u_\delta-b\|_{H^1(0,T;L^2(\Omega;\Rn))},\\
			&\delta\int_0^t \langle Au_\delta(t),\partial_t b(t)\rangle_{H^2}\dt
				\leq \delta\|u_\delta(t)\|_{L^\infty(0,T;H^2(\Omega;\Rn))}\|\partial_{t}b\|_{L^\infty(0,T;H^2(\Omega;\Rn))}.
		\end{align*}
		Taking the estimates \eqref{eqn:aprioriLimitU1} and \eqref{eqn:aprioriLimitU2} into account, we see that
		$\C W_{ext}^\delta(0,t)$ is uniformly bounded in $t$ and $\delta$.
		Therefore, by \eqref{eqn:weakRegEnergyInequality}, $\C F_\delta(t)$ is also uniformly bounded which proves \eqref{eqn:aprioriLimitZ}.
		\ep
	\end{proof}
	\begin{lemma}
	\label{lemma:convergenceLimit}
		There exist functions
		\begin{align*}
			&u\in L^\infty(0,T;H^1(\Omega;\R^n))\cap W^{1,\infty}(0,T;L^2(\Omega;\R^n))\cap H^2(0,T;(H_{\Gamma_\mathrm{D}}^2(\Omega;\R^n))^*),\hspace*{3.5em}\\
			&z\in L^\infty(0,T;\Wa)\cap H^1(0,T;L^2(\Omega))
		\end{align*}
		satisfying \eqref{eqn:initialBoundary}, \eqref{eqn:weakVarIneq1b}, \eqref{eqn:weakVarIneq2b}
		and a subsequence $\{\delta_k\}_{k\in\N}$ with $\delta_k\searrow 0$ as $k\nearrow\infty$ such that
		\begin{subequations}
		\begin{align}
			&u_{\delta_k}\to u&&\text{weakly-star in }L^\infty(0,T;H^1(\Omega;\R^n))\cap W^{1,\infty}(0,T;L^2(\Omega;\R^n))\notag\\
		\label{eqn:convLimitU1}
			&&&\hspace*{6.3em}\text{and weakly in }H^{2}(0,T;(H_{\Gamma_\mathrm{D}}^2(\Omega;\R^n))^*),\\
%		\label{eqn:convLimitU2}
%			&u_{\delta_k}\to u&&\text{strongly in }L^\infty(0,T;H^1(\Omega;\R^n))\cap W^{1,\infty}(0,T;L^2(\Omega;\R^n)),\\
		\label{eqn:convLimitZ1}
			&z_{\delta_k}\to z&&\text{weakly-star in } L^\infty(0,T;\Wa)\text{ and weakly in }H^1(0,T;L^2(\Omega)),\\
		\label{eqn:convLimitZ2}
			&z_{\delta_k}\to z&&\text{strongly in } L^p(0,T;W^{1,p}(\Omega)),\\
		\label{eqn:convLimitZ3}
			&z_{\delta_k}\to z&&\text{strongly in } C(\ol{\Omega}\times[0,T])
		\end{align}
		\end{subequations}
		as $k\nearrow\infty$.
	\end{lemma}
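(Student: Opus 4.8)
The plan is to repeat, in the regime $\delta\searrow0$, the compactness steps of Subsection~\ref{section:H2reg}, the new difficulty being concentrated in the strong convergence of $\nabla z_\delta$. First I would apply Banach--Alaoglu to the $\delta$-uniform bounds \eqref{eqn:aprioriLimitU2} and \eqref{eqn:aprioriLimitZ}: along a subsequence $\delta_k\searrow0$ one gets a weak-star limit $u$ of $u_{\delta_k}$ in $L^\infty(0,T;H^1(\Omega;\R^n))$, a weak-star limit of $\partial_t u_{\delta_k}$ in $L^\infty(0,T;L^2(\Omega;\R^n))$ and a weak limit of $\partial_{tt}u_{\delta_k}$ in $L^2(0,T;(H^2_{\Gamma_\mathrm{D}}(\Omega;\R^n))^*)$, which one identifies as the successive time-derivatives of $u$ by testing against smooth temporal test functions; this gives \eqref{eqn:convLimitU1} and the asserted regularity of $u$. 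In the same way $z_{\delta_k}\weakstarlim z$ in $L^\infty(0,T;\Wa)$ and $\partial_t z_{\delta_k}\weaklim\partial_t z$ in $L^2(0,T;L^2(\Omega))$, i.e.\ \eqref{eqn:convLimitZ1}. The constraints $\partial_t z\le0$ and $z\ge0$, hence \eqref{eqn:weakVarIneq1b} and \eqref{eqn:weakVarIneq2b}, survive the limit because the closed convex cones $\{w\le0\}$ and $\{w\ge0\}$ in $L^2(\Omega_T)$ are weakly closed; and \eqref{eqn:initialBoundary} follows from $u_{\delta_k}=b_{\delta_k}$ on $\Gamma_\mathrm{D}\times(0,T)$, from $u_{\delta_k}(0)=u^0_{\delta_k}\to u^0$, $\partial_t u_{\delta_k}(0)=v^0$, $z_{\delta_k}(0)=z^0$, together with \eqref{eqn:convBDelta} and the weak continuity of the trace map and of evaluation at $t=0$ on the relevant Bochner--Sobolev spaces.

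Next I would promote the convergence of $z$ to the uniform one \eqref{eqn:convLimitZ3}. Since $\{z_{\delta_k}\}$ is bounded in $L^\infty(0,T;\Wa)\cap H^1(0,T;L^2(\Omega))$ and, for $p>n$, $\Wa\hookrightarrow\hookrightarrow C(\overline\Omega)\hookrightarrow L^2(\Omega)$, the Aubin--Lions--Simon compactness lemma \cite{Simon} yields, after passing to a further subsequence, $z_{\delta_k}\to z$ strongly in $C([0,T];C(\overline\Omega))=C(\overline\Omega\times[0,T])$. In particular $z_{\delta_k}\to z$ in $L^q(0,T;L^\infty(\Omega))$ for every $q\in[1,\infty)$, pointwise a.e.\ in $\Omega_T$, and — using additionally the uniform $L^\infty(0,T;\Wa)$-bound — $z_{\delta_k}(t)\weaklim z(t)$ in $\Wa$ for a.e.\ $t\in(0,T)$.

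The remaining, and only really delicate, point is the strong gradient convergence \eqref{eqn:convLimitZ2}, which I would obtain by copying the proof of Lemma~\ref{lemma:strongConvZ} with the $H^2$-regularized solutions in place of the time-discrete ones. By the previous step $\{z_{\delta_k}\}$ meets the hypotheses of Lemma~\ref{lemma:approximation}; applying that lemma with $f_{\delta_k}:=z_{\delta_k}$ and $\zeta=f=z$ (so the optional condition ``$\zeta\le f$'' holds trivially) produces test functions $\zeta_{\delta_k}\in L^\infty(0,T;W^{1,p}_+(\Omega))$ with $\zeta_{\delta_k}\to z$ strongly in $L^p(0,T;\Wa)$ and $0\le\zeta_{\delta_k}\le z_{\delta_k}$ a.e.\ in $\Omega_T$, hence $\zeta_{\delta_k}=0$ on $\{z_{\delta_k}=0\}$. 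Testing the damage inequality \eqref{eqn:weakDamageLaw2} of the $\delta_k$-problem with $\zeta_{\delta_k}-z_{\delta_k}\in W^{1,p}_-(\Omega)$ and using that, by Proposition~\ref{proposition:regSystem}, $\xi_{\delta_k}$ has the explicit form \eqref{eqn:xiDef} — in particular it is supported in $\{z_{\delta_k}=0\}$, where $\zeta_{\delta_k}-z_{\delta_k}$ vanishes — makes the term $\langle\xi_{\delta_k},\zeta_{\delta_k}-z_{\delta_k}\rangle$ drop out. Plugging the resulting estimate into the uniform $p$-monotonicity inequality for $-\Delta_p$, exactly as in \eqref{eqn:zEst}, one is left with three integrals to send to $0$: the term containing $|\nabla z|^{p-2}\nabla z$ by the weak $L^p$-convergence of $\nabla z_{\delta_k}$; the H\"older term $\|\nabla z_{\delta_k}\|_{L^p(\Omega_T)}^{p-1}\|\nabla(\zeta_{\delta_k}-z)\|_{L^p(\Omega_T)}$ by \eqref{eqn:aprioriLimitZ} and the strong convergence of $\zeta_{\delta_k}$; and $\int_0^T\!\int_\Omega\big(W_{,z}(\e(u_{\delta_k}),z_{\delta_k})+f'(z_{\delta_k})+\partial_t z_{\delta_k}\big)(\zeta_{\delta_k}-z_{\delta_k})\dxt$, using that $W_{,z}(\e(u_{\delta_k}),z_{\delta_k})=\frac12 h'(z_{\delta_k})\CC\e(u_{\delta_k}):\e(u_{\delta_k})$ is bounded in $L^\infty(0,T;L^1(\Omega))$ by \eqref{eqn:aprioriLimitU2}, $\partial_t z_{\delta_k}$ in $L^2(\Omega_T)$ by \eqref{eqn:aprioriLimitZ}, $0\le z_{\delta_k}\le1$, and $\zeta_{\delta_k}-z_{\delta_k}\to0$ in $L^q(0,T;L^\infty(\Omega))$ by the previous step. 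This yields $\nabla z_{\delta_k}\to\nabla z$ in $L^p(\Omega_T)$, i.e.\ \eqref{eqn:convLimitZ2}.

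The main obstacle is precisely this last argument: in contrast to \cite{WIAS1520} one has no strong $L^2(0,T;H^1(\Omega;\R^n))$-convergence of $u_{\delta_k}$ (it is destroyed by the inertia term), so one must rely on the facts that all damage-law terms besides $-\Delta_p z$ are controlled by the weak a priori bounds of Lemma~\ref{lemma:aprioriLimit} alone, and that the subgradient $\xi_{\delta_k}$ can be eliminated through its explicit pointwise representation \eqref{eqn:xiDef}; everything else is a routine combination of weak compactness, Aubin--Lions and monotonicity. Once \eqref{eqn:convLimitU1}--\eqref{eqn:convLimitZ3} are established, the functions $u$ and $z$ have all the claimed properties.
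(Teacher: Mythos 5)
Your proposal is correct and follows essentially the same route as the paper: weak/weak-star compactness from the uniform bounds of Lemma \ref{lemma:aprioriLimit}, Aubin--Lions--Simon for the uniform convergence of $z_{\delta_k}$, and then the approximation Lemma \ref{lemma:approximation} combined with the $p$-monotonicity estimate of Lemma \ref{lemma:strongConvZ}, testing \eqref{eqn:weakDamageLaw2} with $\zeta_{\delta_k}-z_{\delta_k}$, to get \eqref{eqn:convLimitZ2}. Your observation that the subgradient term is harmless (either because $\xi_{\delta_k}$ given by \eqref{eqn:xiDef} is supported where $\zeta_{\delta_k}-z_{\delta_k}$ vanishes, or simply because \eqref{eqn:weakVarIneq2a} gives it the right sign) is a detail the paper leaves implicit, and your version makes it explicit correctly.
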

	\begin{proof}
		Properties \eqref{eqn:convLimitU1}, \eqref{eqn:convLimitZ1} and \eqref{eqn:convLimitZ3}
		for some functions $u$ and $z$ follow with standard compactness
		and Aubin-Lions type results (cf. \cite{Simon}) by keeping $p\in(n,\infty)$ in mind.
		
		We also obtain by Aubin-Lions
		\begin{align}
		\label{eqn:convLimitZ4}
			z_{\delta_k}\to z\quad\text{ strongly in }L^q(0,T;L^\infty(\Omega))\text{ for every }q\in[1,\infty)
		\end{align}
		and thus (by possibly resorting to a further subsequence)
		\begin{align}
		\label{eqn:convLimitZ5}
			z_{\delta_k}(t)\to z(t)\quad\text{ weakly in }W^{1,p}(\Omega)
		\end{align}
		as $k\nearrow\infty$ and for a.e. $t\in(0,T)$.
		
		The strong convergence \eqref{eqn:convLimitZ2} can be shown as in Lemma \ref{lemma:strongConvZ}:
		We apply Lemma \ref{lemma:approximation}. Due to \eqref{eqn:convLimitZ5}, one finds an approximation sequence
		$\{\zeta_{\delta_k}\}\subseteq L^\infty(0,T;W_+^{1,p}(\Omega))$
		satisfying $\zeta_{\delta_k}\to\zeta$ in $L^p(0,T;W_+^{1,p}(\Omega))$ and $0\leq \zeta_{\delta_k}\leq z_{\delta_k}$ a.e. in $\Omega\times(0,T)$.
		By using the same $p$-monotonicity estimate as in \eqref{eqn:zEst},
		testing \eqref {eqn:weakDamageLaw2} with $\zeta_{\delta_k}-z_{\delta_k}$,
		using the a priori estimate \eqref{eqn:aprioriLimitU2} in Lemma \ref{lemma:aprioriLimit}
		and the convergence \eqref{eqn:convLimitZ4}, we obtain \eqref{eqn:convLimitZ2}.
		\ep
	\end{proof}
	
	As usual, we omit the subscript $k$.
	\begin{remark}
		Note that by \eqref{eqn:xiDef} and \eqref{eqn:aprioriLimitU2}, we only obtain an $L^\infty(0,T;L^1(\Omega))$-bound
		for $\xi_\delta$.
		A major challenge in the passage $\delta\searrow 0$ is
		to establish a desired subgradient $\xi\in L^1(\Omega\times(0,T))$ for the limit system.
	\end{remark}
	
	\textbf{Proof of Theorem \ref{theorem:mainResult}}
	
	We are going to prove that the functions $u$ and $z$ from Lemma \ref{lemma:convergenceLimit}
	also satisfy \eqref{eqn:weakMomentumBalance}, \eqref{eqn:weakVarIneq2a}, \eqref{eqn:weakDamageLaw2}
	and \eqref{eqn:weakEnergyInequality}.
	\begin{itemize}
		\item[]\hspace*{-2em}\underline{To \eqref{eqn:weakMomentumBalance}:}
			Integrating \eqref{eqn:weakRegMomentumBalance} over time from $0$ to $T$ and using the definition for the elastic energy density in \eqref{eqn:defW},
%			(please note that the linearity of $W_{,e}$ with respect to $\e(u)$ is crucial),
			we find
			\begin{align*}
				&\int_0^T\langle \partial_{tt} u_\delta(t),\zeta(t)\rangle_{H_\GammaD^2}\dt
					+\int_0^T\int_\Omega h(z_\delta)\CC\e(u_\delta):\e(\zeta)\dx+\delta\int_0^T\int_\Omega\langle\nabla(\nabla u_\delta),\nabla(\nabla\zeta)\rangle\dxt\\
					&\qquad=\int_0^T\int_\Omega \ell_\delta\cdot\zeta\dx
			\end{align*}
			for all $\zeta\in L^2(0,T;H_\GammaD^2(\Omega;\Rn))$.
			By exploiting the convergences \eqref{eqn:convEllDelta}, \eqref{eqn:convLimitU1}, \eqref{eqn:convLimitZ3} and
			$$
				\delta\Big|\int_0^T\int_\Omega\langle\nabla(\nabla u_\delta),\nabla(\nabla\zeta)\rangle\dxt\Big|
				\leq \delta\|u_\delta\|_{L^2(0,T;H^2(\Omega;\Rn))}\|\zeta\|_{L^2(0,T;H^2(\Omega;\Rn))}\to 0
			$$
			due to \eqref{eqn:aprioriLimitU1}, we conclude \eqref{eqn:weakMomentumBalance}
			for all $\zeta\in H_{\Gamma_\mathrm{D}}^2(\Omega;\R^n)$ and a.e. $t\in(0,T)$.
			
			By using the density of the set $H_{\Gamma_\mathrm{D}}^2(\Omega;\R^n)$ in $H_{\Gamma_\mathrm{D}}^1(\Omega;\R^n)$ (here we need the assumption that the boundary parts
			$\Gamma_\mathrm{D}$ and $\Gamma_\mathrm{N}$ have finitely many path-connected components, see \cite{Ber11}),
			we identify
			$\partial_{tt}u(t)\in (H_{\Gamma_\mathrm{D}}^1(\Omega;\R^n))^*$.
			Consequently, the equation \eqref{eqn:weakMomentumBalance} is true for all $\zeta\in H_{\Gamma_\mathrm{D}}^1(\Omega;\R^n)$ and a.e. $t\in(0,T)$.
			In particular, $\partial_{tt}u\in L^\infty(0,T;(H_{\Gamma_\mathrm{D}}^1(\Omega;\R^n))^*)$.
		\item[]\hspace*{-2em}\underline{To \eqref{eqn:weakVarIneq2a} and \eqref{eqn:weakDamageLaw2}:}
			We choose the following cluster points
			\begin{align}
				\chi_\delta:=\chi_{\{z_\delta>0\}}&\to\chi&&\text{weakly-star in }L^\infty(\Omega_T),\\
				\eta_\delta:=\chi_{\{z_\delta=0\}\cap\{W_{,z}(\e(u_\delta),z_\delta)+f'(z_\delta)\leq 0\}}&\to\eta&&\text{weakly-star in }L^\infty(\Omega_T),\\
			\label{eqn:weakConv15}
				F_\delta:=\chi_{\{z_\delta>0\}}\e(u_\delta)&\to F&&\text{weakly in }L^2(\Omega_T;\R^{n\times n}),\\
				G_\delta:=\chi_{\{z_\delta=0\}\cap\{W_{,z}(\e(u_\delta),z_\delta)+f'(z_\delta)\leq 0\}}\e(u_\delta)
					&\to G&&\text{weakly in }L^2(\Omega_T;\R^{n\times n})
			\end{align}
			as $\delta\searrow 0$ for a subsequence.
			By \eqref{eqn:convLimitZ3} and \eqref{eqn:convLimitU1}, we
			obtain for a.e. $x\in\{z>0\}$ 
			\begin{align}
			\label{eqn:limitProp}
				\chi(x)=1,\;\eta(x)=0,\;F(x)=\e(u)(x),\;G(x)=0
			\end{align}
			by the following comparison argument:
			
			Let $\zeta\in L^2(\Omega_T;\R^{n\times n})$ with $\mathrm{supp}(\zeta)\subseteq\{z>0\}$.
			Then, by \eqref{eqn:convLimitZ3}, we find $\mathrm{supp}(\zeta)\subseteq\{z_\delta>0\}$ for all sufficiently small $\delta>0$.
			On the one hand, \eqref{eqn:weakConv15} implies
			$$
				\int_0^T\int_{\Omega}F_\delta:\zeta\dxt\to\int_0^T\int_{\Omega}F:\zeta\dxt.
			$$
			On the other hand, by $\mathrm{supp}(\zeta)\subseteq\{z_\delta>0\}$ and \eqref{eqn:convLimitU1}
			$$
				\int_0^T\int_{\Omega}F_\delta:\zeta\dxt=\int_0^T\int_{\Omega}\e(u_\delta):\zeta\dxt\to\int_0^T\int_{\Omega}\e(u):\zeta\dxt.
			$$
			Thus $\int_0^T\int_{\Omega}\e(u):\zeta\dxt=\int_0^T\int_{\Omega}F:\zeta\dxt$ and, consequently, $F=\e(u)$ a.e. in $\{z>0\}$.
			The other identities in \eqref{eqn:limitProp} follow analogously.
			
			Now, let $\zeta\in L^\infty(0,T;W_-^{1,p}(\Omega))$.
			Taking \eqref{eqn:xiDef} into account, inequality \eqref{eqn:weakDamageLaw2} becomes after integration over time
			\begin{align}
				0\leq{}&\int_0^T\int_{\Omega}\bl|\nabla z_\delta|^{p-2}\nabla z_\delta\cdot\nabla\zeta+\partial_t z_\delta\zeta\br\dxt
				+\int_{\{z_\delta>0\}}\bl W_{,z}(\e(u_\delta),z_\delta)+f'(z_\delta)\br\zeta\dxt\notag\\
				&+\int_{\{z_\delta=0\}\cap \{W_{,z}(\e(u_\delta),z_\delta)+f'(z_\delta)\leq 0\}}\bl W_{,z}(\e(u_\delta),z_\delta)+f'(z_\delta)\br\zeta\dxt
			\label{eqn:cancelTrick}
			\end{align}
			for all $\zeta\in L^\infty(0,T;W_-^{1,p}(\Omega))$.
			Applying $\limsup_{\delta\searrow 0}$ on both sides and multiplying by $-1$ yield
			\begin{align*}
				0\geq{}&\lim_{\delta\searrow 0}\int_0^T\int_{\Omega}\bl|\nabla z_\delta|^{p-2}\nabla z_\delta\cdot\nabla(-\zeta)+\partial_t z_\delta(-\zeta)\br\dxt\\
				&+\liminf_{\delta\searrow 0}\int_0^T\int_{\Omega}h'(z_\delta)\CC F_\delta:F_\delta(-\zeta)\dxt+\lim_{\delta\searrow 0}\int_0^T\int_{\Omega}\chi_\delta\,f'(z_\delta)(-\zeta)\dxt\\
				&+\liminf_{\delta\searrow 0}\int_0^T\int_{\Omega}h'(z_\delta)\CC G_\delta:G_\delta(-\zeta)\dxt+\lim_{\delta\searrow 0}\int_0^T\int_{\Omega}\eta_\delta\,f'(z_\delta)(-\zeta)\dxt.
			\end{align*}
			Weakly lower semi-continuity arguments, the convergence property \eqref{eqn:convLimitZ3} and the identifications listed in \eqref{eqn:limitProp} give
			\begin{align*}
				0\geq{}&\int_0^T\int_{\Omega}\bl|\nabla z|^{p-2}\nabla z\cdot\nabla(-\zeta)+\partial_t z(-\zeta)\br\dxt\\
				&+\int_{\{z>0\}}\bl W_{,z}(\e(u),z)+f'(z)\br(-\zeta)\dxt\\
				&+\int_{\{z=0\}}\Big(h'(z)(\CC F:F+\CC G:G)+(\chi+\eta) f'(z)\Big)(-\zeta)\dxt.
			\end{align*}
			This inequality may also be rewritten in the following form:
			\begin{align*}
				0\leq{}&\int_0^T\int_{\Omega}\Big(|\nabla z|^{p-2}\nabla z\cdot\nabla\zeta+\bl W_{,z}(\e(u),z)+f'(z)+\partial_t z\br\zeta\Big)\dxt\\
				&+\int_{\{z=0\}}\Big(h'(z)(\CC F:F+\CC G:G)+(\chi+\eta) f'(z)-W_{,z}(\e(u),z)-f'(z)\Big)\zeta\dxt.
			\end{align*}
			Therefore,
			\begin{align*}
				0\leq{}&\int_0^T\int_{\Omega}\Big(|\nabla z|^{p-2}\nabla z\cdot\nabla\zeta+\bl W_{,z}(\e(u),z)+f'(z)+\partial_t z+\xi\br\zeta\Big)\dxt
			\end{align*}
			with
			$$
				\xi:=\chi_{\{z=0\}}\mathrm{min}\Big\{0,h'(z)(\CC F:F+\CC G:G)+(\chi+\eta-1) f'(z)-W_{,z}(\e(u),z)\Big\}.
			$$
			We obtain \eqref{eqn:weakVarIneq2a} and \eqref{eqn:weakDamageLaw2}.
			
		\item[]\hspace*{-2em}\underline{To \eqref{eqn:weakEnergyInequality}:}
			In order to establish the energy inequality \eqref{eqn:weakEnergyInequality}, we can proceed as in the proof of Proposition \ref{proposition:regSystem}:
			
			In fact, integrating the regularized energy-dissipation inequality \eqref{eqn:weakRegEnergyInequality} over the time interval $[t_1,t_2]$ with
			$0\leq t_1\leq t_2\leq T$,
			using the estimate $\frac{\delta}{2}\langle Au_\delta(t),u_\delta(t)\rangle_{H^2}\geq 0$ on the left hand side
			and passing to the limit by using lower semi-continuity arguments, the convergences in Lemma \ref{lemma:convergenceLimit} and
			\eqref{eqn:convUZeroDelta}-\eqref{eqn:convBDelta} as well as Fatou's lemma yields 
			\begin{align*}
				\int_{t_1}^{t_2}\big(\C F(t)+\C K(t)+\C D(0,t)\big)\dt\leq
					\int_{t_1}^{t_2}\big(\C F(0)+\C K(0)+\C W_{ext}(0,t)\big)\dt.
			\end{align*}
			Since $0\leq t_1\leq t_2\leq T$ were arbitrary, \eqref{eqn:weakEnergyInequality} holds for a.e. $t\in(0,T)$.
			\ep
	\end{itemize}

\begin{scriptsize} % tiny(5) < scriptsize(7) < footnotesize(8) < small (9)

%\bibliographystyle{Latex/Classes/PhDbiblio-url2} % Title is link if provided
%---
\bibliographystyle{alpha}
\bibliography{references} % adjust this to fit your BibTex file

\begin{thebibliography}{FKNS98}

\bibitem[Ber11]{Ber11}
J.-M.E. Bernard.
\newblock {Density results in {S}obolev spaces whose elements vanish on a part
  of the boundary.}
\newblock {\em Chin. Ann. Math., Ser. B}, 32(6):823--846, 2011.

\bibitem[BS04]{BS04}
E.~Bonetti and G.~Schimperna.
\newblock Local existence for {F}r\'emond's model of damage in elastic
  materials.
\newblock {\em Contin. Mech. Thermodyn.}, 16(4):319--335, 2004.

\bibitem[BSS05]{BSS05}
E.~Bonetti, G.~Schimperna, and A.~Segatti.
\newblock On a doubly nonlinear model for the evolution of damaging in
  viscoelastic materials.
\newblock {\em J. of Diff. Equations}, 218(1):91--116, 2005.

\bibitem[FKNS98]{FKNS98}
M.~{Fr\'emond}, K.L. {Kuttler}, B.~{Nedjar}, and M.~{Shillor}.
\newblock {One-dimensional models of damage.}
\newblock {\em {Adv. Math. Sci. Appl.}}, 8(2):541--570, 1998.

\bibitem[FKS99]{FKS99}
M.~{Fr\'emond}, K.L. {Kuttler}, and M.~{Shillor}.
\newblock {Existence and uniqueness of solutions for a dynamic one-dimensional
  damage model.}
\newblock {\em {J. Math. Anal. Appl.}}, 229(1):271--294, 1999.

\bibitem[FN96]{FN96}
M.~Fr{\'e}mond and B.~Nedjar.
\newblock Damage, gradient of damage and principle of virtual power.
\newblock {\em Int. J. Solids Structures}, 33(8):1083--1103, 1996.

\bibitem[Fr{\'e}12]{Fr12}
M.~Fr{\'e}mond.
\newblock {\em Phase Change in Mechanics}.
\newblock Lecture Notes of the Unione Matematica Italiana. Springer, 2012.

\bibitem[HK11]{WIAS1520}
C.~Heinemann and C.~Kraus.
\newblock Existence of weak solutions for {C}ahn-{H}illiard systems coupled
  with elasticity and damage.
\newblock {\em Adv. Math. Sci. Appl.}, 21(2):321--359, 2011.

\bibitem[HK13]{WIAS1569}
C.~Heinemann and C.~Kraus.
\newblock Existence results for diffuse interface models describing phase
  separation and damage.
\newblock {\em Eur. J. Appl. Math.}, 24(2):179--211, 2013.

\bibitem[KRZ13a]{KRZ11}
D.~{Knees}, R.~{Rossi}, and C.~{Zanini}.
\newblock {A vanishing viscosity approach to a rate-independent damage model.}
\newblock {\em {Math. Models Methods Appl. Sci.}}, 23(4):565--616, 2013.

\bibitem[KRZ13b]{KRZ13}
D.~Knees, R.~Rossi, and C.~Zanini.
\newblock A quasilinear differential inclusion for viscous and rate-independent
  damage systems in non-smooth domains.
\newblock {\em WIAS preprint 1867}, 2013.

\bibitem[MR06]{Mielke06}
A.~Mielke and T.~Roub{\'i}{\v c}ek.
\newblock Rate-independent damage processes in nonlinear elasticity.
\newblock {\em Mathematical Models and Methods in Applied Sciences},
  16:177--209, 2006.

\bibitem[MT10]{MT10}
A.~Mielke and M.~Thomas.
\newblock Damage of nonlinearly elastic materials at small strain ---
  {E}xistence and regularity results.
\newblock {\em ZAMM Z. Angew. Math. Mech}, 90:88--112, 2010.

\bibitem[RR14]{RR12}
E.~Rocca and R.~Rossi.
\newblock A degenerating {PDE} system for phase transitions and damage.
\newblock {\em Math. Models Methods Appl. Sci.}, 24:1265--1341, 2014.

\bibitem[Sim86]{Simon}
J.~Simon.
\newblock Compact sets in the space ${L}^p(0,{T};{B})$.
\newblock {\em Annali di Matematica Pura ed Applicata}, 146:65--96, 1986.

\end{thebibliography}

\end{scriptsize}

\end{document}